\newtheorem{theorem}{Theorem}[]
\newtheorem{cor}{Corollary}
\newtheorem{lemma}{Lemma}
\newcommand{\p}{\mathbf{P}}
\newcommand{\E}{\mathbf{E}}
\renewcommand{\d}{\mathrm{d}}
\newcommand{\e}{\mathrm{e}}
\newcommand{\R}{{\mathbb{R}}}
\newcommand{\N}{\mathbb{N}}
\newcommand{\0}{{\mathbf{0}}}
\newcommand{\x}{{\mathbf{x}}}
\newcommand{\m}{{\mathbf{m}}}
\renewcommand{\l}{{\mathbf{\ell}}}
\newcommand{\ev}{{\mathbf{e}}}
\newcommand{\unit}{\mathbf{1}}
\renewcommand{\c}{\mathbf{c}}
\renewcommand{\v}{\mathbf{v}}
\newcommand{\y}{\mathbf{y}}
\newcommand{\X}{\mathbf{X}}
\newcommand{\Y}{\mathbf{Y}}
\newcommand{\stleq}{\leq_{\mathrm{st}}}
\newcommand{\stgeq}{\geq_{\mathrm{st}}}
\renewenvironment{proof}{\smallskip \noindent \textbf{Proof.}}
{\vspace*{1pt} \hfill $\square$ \medskip}
\newenvironment{proof*}[1]{\smallskip \noindent
\textbf{Proof of #1.}}
{\vspace*{1pt} \hfill $\square$ \medskip}
\begin{document}

\title{On the diminishing process of B. T\'oth}

\author { \textsc{P\'eter Kevei}\thanks{This research was supported by the European Union and the State of Hungary,
co-financed by the European Social Fund in the framework of T\'AMOP-4.2.4.A/ 2-11/1-2012-0001  ‘National Excellence Program’.} \\
MTA-SZTE Analysis and Stochastic Research Group \\ Bolyai Institute, University of Szeged \\
\texttt{kevei@math.u-szeged.hu}
\and \textsc{Viktor V\'igh}\footnotemark[1] \\
Department of Geometry \\ Bolyai Institute, University of Szeged \\
\texttt{vigvik@math.u-szeged.hu}
}

\date{\today}

\maketitle

\begin{abstract}
 Let $K$ and $K_0$ be  convex bodies in $\R^d$, such that
$K$ contains the origin, and  define the process $(K_n, p_n)$,
$n \geq 0$, as follows: let $p_{n+1}$ be a uniform
random point in $K_n$, and set $K_{n+1} = K_n \cap (p_{n+1} + K)$. Clearly,
$(K_n)$ is a nested sequence of convex bodies which converge to a non-empty
limit object, again a convex body in $\R^d$. We study this process for $K$ being a regular simplex, a cube,
or a regular convex polygon with an odd number of vertices.
We also derive some new results in one dimension for non-uniform distributions.
\end{abstract}

\section{Introduction}

The following problem was formulated by B\'alint
T\'oth some 20 years ago with $K=K_0$ being the unit disc of the plane. 
Let $K$ and $K_0$ be  convex bodies in $\R^d$, such that
$K$ contains the origin, and  define the process $(K_n, p_n)$,
$n \geq 0$, as follows: let $p_{n+1}$ be a uniform
random point in $K_n$, and set $K_{n+1} = K_n \cap (p_{n+1} + K)$. Clearly,
$(K_n)$ is a nested sequence of convex bodies which converge to a non-empty
limit object, again a convex body in $\R^d$. What can we say about the
distribution of this limit body? What can we say about the speed of the process?
In Figure \ref{fig:7gon} one can see the evolution of the process up to $n=10$ on the right,
and $K_{10}$ on the left, when $K = K_0$ is a regular heptagon.

\begin{figure} 
\begin{center}
\includegraphics[scale=0.38, angle=180]{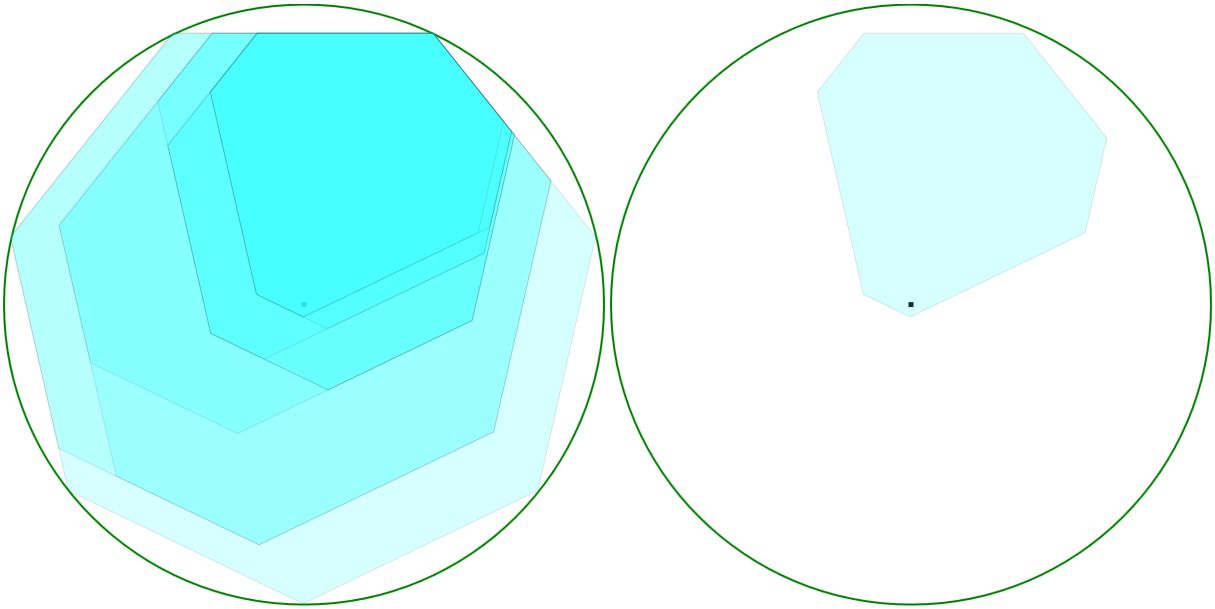}
\caption{\label{fig:7gon} The evolution of the process for $K=K_0$ being a regular heptagon}
\end{center}
\end{figure}

In \cite{AKV} Ambrus, Kevei and V\'igh investigated the process in $1$ dimension, when
$K = K_0 = [-1,1]$. In this case the limit object is a random unit interval, whose
center has the arcsine distribution (see Theorem $1$ in \cite{AKV}). 
So even in the simplest case the process has very interesting features.
Moreover, in Theorem $2$ in \cite{AKV}
it is shown that if $r_n$ is the radius of the interval $K_n$, then $ 4n (r_n - 1/2)$
converges in distribution to a standard exponential random variable.
The idea of the proof is to observe that $(r_n - 1/2)$ behaves as the minimum
of iid random variables, and thus obtain the limit theorem via extreme value theory.

We also would like to point out the formal relationship between the diminishing process and the so called R\'enyi's Parking Problem from 1958 \cite{renyi58}.
R\'enyi studied the following random process:
consider an interval $I$ of length $x>>1$, and sequentially and randomly pack (non-overlapping) unit intervals into $I$.
In each step we choose the center of the next unit interval uniformly from the possible space.
The process stops when there is no space for placing a new unit interval. (Intuitively $I$ is the parking lot and the unit
intervals are the cars.) The first possible question is to determine the expectation $M(x)$
of the covered space. Many other variants of this problem has been studied in the last more than 50 years, for an up-to-date
state of the art we refer to Clay and Sim\'anyi \cite{sim}. The connection between the diminishing process and
R\'enyi's Parking Problem can be seen easily as follows: if we choose in the definition of the diminishing process $K_0=I$,
and we drop the conditions we put on $K$, 
and define $K$ as the complement of
the closed interval of length $2$ centered at $0$, then we get exactly R\'enyi's Parking Problem.

In the present paper we analyze the diminishing process in more general cases. In Section \ref{sect:1dim}
we consider the case, when instead of choosing $p_{n+1}$ uniformly in the interval, we
choose it according to a translated and scaled version of a fixed distribution $F$.
Again, the limit object is a random unit interval.
In Theorem \ref{thm:1dim-rapid} we determine the asymptotic behavior of the speed, while
in Theorem \ref{thm:1dim-center} we show that for appropriate choice of $F$ the distribution
of the center has the beta law.
In Sections  \ref{sect:cube} and \ref{sect:simplex} we consider the case when $K = K_0$ is a cube and a regular $d$-dimensional
simplex, respectively.  The cube process can be represented as $d$ independent interval
processes, thus the results in Section \ref{sect:cube} follow from the corresponding results in \cite{AKV}.
In the case of the simplex process, the limit object is also a random regular simplex. The main result of this part is that
the center of the limit simplex in barycentric coordinates has multidimensional Dirichlet law,
which is a natural generalization of the beta laws to any dimension. The rate of the process is also
determined. The processes considered this far are `self-similar' in the sense that at each step the process is
a scaled and translated version of the original one.

In Sections \ref{sect:5}, \ref{sect:pentagon} and \ref{sect:sok} we consider diminishing 
processes in the plane. In case of the pentagon process even the shape of the limiting object is random. We prove that it is
a pentagon with equal angles, however it is not regular a.s. This process is not `self-similar', and its 
behavior is more complicated. We determine the rate of the convergence of the maximal height,
but as the area of the limit object is random, limit theorem with deterministic normalization is not possible.
Also the behavior of the center of mass is intractable with our methods. Finally, in Section \ref{sect:sok}
we consider regular polygons with odd number of vertices, i.e.~$K= K_0$ is  a regular polygon. Using the theory
of stochastic orderings for random vectors we prove that the rate of the speed is $n^{-1/2}$.
We conjecture that in the case, when the number of vertices is even the speed of the process is $n^{-1}$.
This is established in case of square, but in general
it is open.

\section{One dimension, general density} \label{sect:1dim}

In this section we consider the process in the interval $[-1,1]$, and the random point is chosen
according to a not necessarily uniform distribution.

Fix a distribution on $[0,1]$ with distribution function $F$, and in each
step we choose the random point according to this distribution. That is,
if the center and radius is $(Z_n, r_n)$ the random point $p_{n+1}$ is
given by $2 r_n X_{n+1} + Z_n - r_n$, where $X_{n+1}$ is independent
from $Z_n, r_n$, and has distribution function $F$. The initial condition
is $(Z_0, r_0) = (0,1)$, i.e.~we start from the interval $[-1,1]$.

Let $X, X_1, X_2, \ldots$ iid random variables with distribution function $F$. It is easy to see that
for $n \geq 0$
\begin{equation} \label{eq:gen-rho-recursion}
r_{n+1} = \left\{
\begin{array}{ll}
\frac{1}{2} + r_n \min \{ X_{n+1}, 1- X_{n+1} \}, & 
\min \{ X_{n+1}, 1- X_{n+1} \} \leq 1 - \frac{1}{2 r_n}, \\
r_n, & \textrm{otherwise.}
\end{array} \right.
\end{equation}

To simplify the recursions above we have to pose some assumptions on $F$.
The following lemmas contain these assumptions. To determine the rapidness
of the process we only need part (i), while for the limit distribution of the
center we need both parts. In fact, in both cases we only need the `if' part.
In the following, for a random variable $X$ and an event $A$ the notation
 $X | A$ stands for the conditional distribution of $X$ given $A$.

\begin{lemma} \label{lemma:konnyu}
Let $X$ be a random variable with distribution function $F$, such that $\p \{ X \in [0,1] \} = 1$.
\begin{itemize}
\item[(i)] For all $a \in [0,1]$, for which $\p \{ X \leq a \} > 0$, the distributional equality
\[
X | ( X \leq a) \stackrel{\mathcal{D}}{=} a X
\]
holds, if and only if either $X$ is a degenerate random variable at 0, or
$F(x) = x^\delta$, $x \in [0,1]$, for some $\delta > 0$.

\item[(ii)] The random variables $I(X \leq 1/2)$ and $\max \{ X, 1- X\}$ are
independent if and only if $F(1/2) = 0$, or $F(1/2) = 1$, or
\[
F(x) = 1 - \frac{1- F(1/2)}{F(1/2)} F(1- x -)
\]
for all $x \in [1/2, 1]$.
\end{itemize}
\end{lemma}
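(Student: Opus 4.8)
For part (i), the plan is to treat the two directions separately, with the "if" direction being the routine one. If $X$ is degenerate at $0$ the claim is trivial, and if $F(x) = x^\delta$ then for $a$ with $F(a) = a^\delta > 0$ we compute directly
\[
\p\{ X | (X \le a) \le x \} = \frac{F(x)}{F(a)} = \frac{x^\delta}{a^\delta} = \p\{ aX \le x \}, \qquad x \in [0,a],
\]
which gives the distributional identity. For the "only if" direction I would write the hypothesis as the functional equation $F(ax) = F(a) F(x)$ for all $x \in [0,1]$ and all admissible $a$ (those with $F(a) > 0$). Assuming $X$ is not degenerate at $0$, there is some $a_0 < 1$ with $0 < F(a_0) < 1$, and one checks that the set of admissible $a$ contains an interval $(b,1]$; on that range $F$ is continuous and strictly increasing (otherwise a flat piece or jump propagates under the multiplicative relation and contradicts $F(a)\in(0,1)$ somewhere). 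Then $G(t) := \log F(\e^{-t})$ satisfies Cauchy's equation $G(s+t) = G(s) + G(t)$ on a suitable positive cone, is monotone, hence linear: $G(t) = -\delta t$ for some $\delta > 0$, i.e. $F(x) = x^\delta$. I expect the main obstacle here to be the bookkeeping needed to rule out pathological (non-monotone-regular) solutions of the multiplicative equation without a continuity assumption — the standard trick is to exploit monotonicity of $F$ to force measurability/regularity and then invoke the classical solution of the Cauchy equation.

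For part (ii), write $p := F(1/2)$ and assume $0 < p < 1$ (the boundary cases are immediate since then $I(X \le 1/2)$ is a.s. constant). Set $Y := \max\{X, 1-X\} \in [1/2,1]$. Independence of $I(X\le 1/2)$ and $Y$ means that for every $y \in [1/2,1]$,
\[
\p\{ Y \le y,\ X \le 1/2 \} = p\,\p\{ Y \le y \}.
\]
Now on $\{X \le 1/2\}$ we have $Y = 1 - X$, so $\{Y \le y,\ X\le 1/2\} = \{1 - y \le X \le 1/2\}$, whose probability is $F(1/2) - F((1-y)-) = p - F((1-y)-)$; on $\{X > 1/2\}$ we have $Y = X$, so $\{Y \le y,\ X > 1/2\} = \{1/2 < X \le y\}$ with probability $F(y) - p$. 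Hence $\p\{Y \le y\} = F(y) - F((1-y)-)$, and substituting into the independence relation yields
\[
p - F((1-y)-) = p\bigl(F(y) - F((1-y)-)\bigr), \qquad y \in [1/2,1],
\]
which rearranges to $F(y) = 1 - \frac{1-p}{p} F((1-y)-)$, exactly the stated identity. Conversely, reversing these algebraic steps shows the identity implies the factorization $\p\{Y\le y,\ X\le 1/2\} = p\,\p\{Y \le y\}$, and since $I(X \le 1/2)$ is two-valued this is full independence. The only delicate point is careful handling of left limits versus values of $F$ at the possible atom $1/2$ and at the reflected points $1-y$; once the events are dissected correctly the equivalence is a one-line computation, so I do not anticipate a serious obstacle in part (ii).
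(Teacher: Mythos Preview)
Your proposal is correct and follows essentially the same route as the paper. For part (i) both you and the paper reduce to the multiplicative functional equation $F(ax)=F(a)F(x)$ and invoke monotonicity of $F$ to force the power-law solution; for part (ii) the paper computes $\p\{I(X\le 1/2)=0,\ Y>x\}$ and equates it with the product, whereas you compute $\p\{I(X\le 1/2)=1,\ Y\le y\}$, but these are symmetric variants of the same one-line factorization and lead to the identical relation $F(y)=1-\tfrac{1-p}{p}F((1-y)-)$.
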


The simple proof of Lemma~\ref{lemma:konnyu} is given in the Appendix.
As an immediate consequence we obtain the following.

\begin{lemma} \label{lemma:corollary}
Let $Y$ be a random variable in $[0,1]$ with continuous distribution function $F$.
Then for any $a \in (0,1)$ the distributional equality
\[
2 \min \{ Y, 1- Y \} | \left(  2 \min \{ Y, 1- Y \} \leq a \right)
\stackrel{\mathcal{D}}{=} a \, 2 \min \{ Y, 1- Y \} 
\]
holds, and $I(Y \leq 1/2)$ and $\max \{ Y, 1- Y\}$ are
independent if and only if
\begin{equation} \label{eq:df-form}
F(x) = \left\{
\begin{array}{ll}
c 2^\delta x^\delta, & x \in [0,1/2], \\
1 - (1-c) 2^\delta (1-x)^\delta, & x \in [1/2, 1],
\end{array} \right. 
\end{equation}
for some $c \in [0,1]$ and $\delta > 0$.
\end{lemma}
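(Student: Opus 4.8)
The strategy is to derive everything from Lemma~\ref{lemma:konnyu} by passing to the auxiliary random variable $Z:=2\min\{Y,1-Y\}$, which takes values in $[0,1]$. Since $F$ is continuous, $\p\{Y\in\{0,1\}\}=0$, hence $\p\{Z=0\}=0$; in particular $Z$ is not degenerate at $0$ and its distribution function $G$ is continuous, with $G(0{+})=0$ and $G(1)=1$. Computing $G$ from $F$ (again using continuity of $F$) gives, for $z\in[0,1]$,
\[
G(z)=\p\{Y\le z/2\}+\p\{Y\ge 1-z/2\}=F(z/2)+1-F(1-z/2).
\]

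First I would handle the distributional identity. Applying Lemma~\ref{lemma:konnyu}(i) to $Z$ (whose distribution function is $G$) and discarding the degenerate alternative, which is impossible since $Z$ is not degenerate at $0$, the property that $Z\mid(Z\le a)\stackrel{\mathcal{D}}{=}aZ$ for every $a$ with $\p\{Z\le a\}>0$ is equivalent to $G(z)=z^{\delta}$ for some $\delta>0$. When $G(z)=z^{\delta}$, we have $\p\{Z\le a\}=a^{\delta}>0$ for every $a\in(0,1)$, so that qualifier is then vacuous on $(0,1)$; hence the distributional identity in the statement holds for all $a\in(0,1)$ if and only if $G(z)=z^{\delta}$. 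Substituting $x=z/2$ into the formula for $G$, this is in turn equivalent to
\[
F(x)-F(1-x)=(2x)^{\delta}-1 \qquad\text{for all } x\in[0,1/2].
\]
For the independence condition I would apply Lemma~\ref{lemma:konnyu}(ii) to $Y$ itself: since $F$ is continuous, $F(1-x-)=F(1-x)$, so $I(Y\le 1/2)$ and $\max\{Y,1-Y\}$ are independent if and only if $F(1/2)=0$, or $F(1/2)=1$, or, writing $c:=F(1/2)\in(0,1)$,
\[
F(1-x)=1-\frac{1-c}{c}\,F(x)\qquad\text{for all } x\in[0,1/2].
\]

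It then remains to combine the two equivalences. For the ``if'' direction, a direct substitution of \eqref{eq:df-form} gives $F(z/2)=c\,z^{\delta}$ and $F(1-z/2)=1-(1-c)z^{\delta}$ for $z\in[0,1]$, whence $G(z)=z^{\delta}$ and the distributional identity holds; and since $c=F(1/2)$ one checks at once that \eqref{eq:df-form} obeys the independence relation above, the values $c=0$ and $c=1$ corresponding to $F(1/2)=0$ and $F(1/2)=1$. For the ``only if'' direction, assume both properties. The first yields $F(x)-F(1-x)=(2x)^{\delta}-1$ on $[0,1/2]$. If $c:=F(1/2)\in(0,1)$, inserting the independence relation into this identity gives $F(x)/c=(2x)^{\delta}$, that is, $F(x)=c\,2^{\delta}x^{\delta}$ on $[0,1/2]$, and then the independence relation used in the other direction gives $F(x)=1-(1-c)2^{\delta}(1-x)^{\delta}$ on $[1/2,1]$; this is exactly \eqref{eq:df-form}. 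If $F(1/2)=0$ then $F\equiv 0$ on $[0,1/2]$, and the identity forces $F(x)=1-2^{\delta}(1-x)^{\delta}$ on $[1/2,1]$, which is \eqref{eq:df-form} with $c=0$; the case $F(1/2)=1$ is symmetric and gives \eqref{eq:df-form} with $c=1$.

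The computations are all elementary substitutions once $Z$ is introduced; the only points needing care are keeping the three regimes $F(1/2)=0$, $F(1/2)=1$ and $F(1/2)\in(0,1)$ apart, and the minor bookkeeping of which conditioning events in Lemma~\ref{lemma:konnyu}(i) carry positive probability --- both handled by the continuity of $F$ together with the observation that $G(z)=z^{\delta}$ makes $\p\{Z\le a\}>0$ throughout $(0,1)$.
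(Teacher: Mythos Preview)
Your proposal is correct and follows exactly the route the paper intends: the paper states Lemma~\ref{lemma:corollary} as ``an immediate consequence'' of Lemma~\ref{lemma:konnyu} and gives no separate proof, and your argument---applying part~(i) to $Z=2\min\{Y,1-Y\}$ and part~(ii) to $Y$, then solving the resulting pair of functional relations for $F$---is precisely that consequence spelled out in full. The bookkeeping on the positivity of $\p\{Z\le a\}$ and the three cases $F(1/2)\in\{0,1\}$ versus $F(1/2)\in(0,1)$ is handled correctly.
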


During the analysis of diminishing processes 
we frequently end up with a recursion of the following type.

Let $V, V_1, \ldots$ be a sequence of iid random variables with distribution function
$\p \{ V \leq x \} = x^\delta$, $x \in [0,1]$, for some $\delta > 0$,
and let $(a_n)$ be a sequence of bounded nonnegative random variables, such that 
$a_n \downarrow a$, a.s., where $a > 0$ is deterministic. 
Assume that $\ell_0= 1$, and for $n \geq 0$, for some $c > 0$
\begin{equation} \label{eq:ell-recursion}
\ell_{n+1} = 
\begin{cases}
 \ell_n V_{n+1}, & \textrm{w.p. } c \frac{\ell_n^\delta}{a_n}, \\
 \ell_n, & \textrm{w.p. } 1 - c \frac{\ell_n^\delta}{a_n},
\end{cases}
\end{equation}
where $c \frac{\ell_n^\delta}{a_n} \in [0,1]$ and
the abbreviation w.p.~stands for `with probability'.

To be precise this means here and later on the following.
On our probability space $(\Omega, \mathcal{A}, \p)$ there is
a filtration $(\mathcal{F}_n )_{n \geq 0}$. The filtration is usually
generated by the random points $p_n$, i.e.~$\mathcal{F}_n = \sigma(p_1, \ldots, p_n)$.
The random variables $a_n$ and $\ell_n$ are $\mathcal{F}_n$ measurable, and
almost surely $a_n \downarrow a > 0$. Conditionally on $a_n$ and $\ell_n$
let $\omega_{n+1}$ be a Bernoulli$(c\frac{\ell_n^\delta}{a_n})$ random variable and
independently $V_{n+1}$ is a random variable with distribution function 
$x^\delta$, $x \in [0,1]$. Then $\ell_{n+1} = \ell_n V_{n+1}$ whenever
$\omega_{n+1} = 1$, and $\ell_{n+1} = \ell_n$ otherwise.
(Here and in the following section $a_n$ is simply a function of $\ell_n$. However,
when dealing with the polygon process $a_n$ is the area of $K_n$, and it does depend
on the chosen points, and not only on $\ell_n$. This is the reason of the complication.)

In the next lemma we determine the asymptotic behavior of such sequence $\ell_n$.
The idea of the proof is to show that $\ell_n$ behaves like the minimum of
$n$ iid random variables, as in the proof of Theorem 1 in \cite{AKV}.
The proof is deferred to the Appendix.

For $\delta > 0$, the Weibull$(\delta)$ distribution function
is given by $1 - \e^{-x^{\delta}}$, for $x > 0$, and 0 otherwise.

\begin{lemma} \label{lemma:recursion}
Assume that $\ell_n$ is defined by (\ref{eq:ell-recursion}). Then
\[
\left( \frac{c}{a} n \right)^{1/\delta} \ell_n
\stackrel{\mathcal{D}}{\longrightarrow} \mathrm{Weibull}(\delta).
\]
Moreover, for any $\alpha > 0$
\[
\lim_{n \to \infty}
\E \left( \frac{c}{a} n \right)^{\alpha/\delta} \ell_n^\alpha
=\frac{\alpha}{\delta} \Gamma \left( \frac{\alpha}{\delta} \right),
\]
where $\Gamma(\, \cdot \,)$ is the usual Gamma-function.
\end{lemma}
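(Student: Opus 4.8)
The plan is to follow the strategy announced just before the statement: show that $\ell_n$ behaves, up to lower-order corrections, like the minimum of $n$ i.i.d.\ copies of a suitable random variable, and then invoke classical extreme-value theory for the Weibull domain of attraction. The key observation is that at each step $\ell_n$ is either multiplied by $V_{n+1}$ or left unchanged, so $\ell_n = \prod_{k=1}^n V_k^{\omega_k}$; equivalently, if $N_n = \sum_{k=1}^n \omega_k$ counts the ``successful'' steps, then $\ell_n \stackrel{\mathcal D}{=} \prod_{j=1}^{N_n} \widetilde V_j$ where the $\widetilde V_j$ are i.i.d.\ with distribution function $x^\delta$. Taking $-\log$, we get a sum of $N_n$ i.i.d.\ exponential$(\delta)$ variables, so $-\log \ell_n$ is (conditionally) a Gamma-type sum whose index $N_n$ grows linearly in $n$.

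First I would make the heuristic precise by comparing $\ell_n$ with $m_n := \min\{W_1,\dots,W_{\lfloor (c/a) n\rfloor}\}$ for appropriate i.i.d.\ $W_i$. Concretely: since $a_n \downarrow a$, for every $\varepsilon > 0$ eventually $a \le a_n \le a+\varepsilon$, so the success probability $c\ell_n^\delta/a_n$ is squeezed between $c\ell_n^\delta/(a+\varepsilon)$ and $c\ell_n^\delta/a$. A coupling argument then sandwiches $\ell_n$ between two processes of the same type with constant $a_n$ replaced by $a$ and by $a+\varepsilon$; since $\varepsilon$ is arbitrary, it suffices to treat the case $a_n \equiv a$. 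In that case the process is genuinely Markov with transition depending only on $\ell_n$, and I would analyze $\ell_n$ directly. The cleanest route is: compute, for fixed $t>0$, the probability $\p\{(\tfrac{c}{a}n)^{1/\delta}\ell_n > t\} = \p\{\ell_n > t (\tfrac{a}{cn})^{1/\delta}\}$; the event $\{\ell_n > \text{small}\}$ forces few successes and/or large $V_k$'s. One shows that the number of successes up to the first time $\ell_n$ drops below a threshold $\epsilon$ behaves like a geometric/Poisson count, and a direct recursion on $u_n(t) := \p\{\ell_n^\delta > t^\delta a/(cn)\}$, or better on $\E[\e^{-\lambda n \ell_n^\delta}]$, leads to the differential/functional relation whose solution is $\e^{-t^\delta}$ — the Weibull$(\delta)$ tail. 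Equivalently, writing $L_n = \ell_n^\delta$, we have $L_{n+1} = L_n$ or $L_n V_{n+1}^\delta = L_n U_{n+1}$ with $U_{n+1}$ uniform on $[0,1]$, w.p.\ $cL_n/a_n$; then $\E[L_{n+1}\mid \mathcal F_n] = L_n - \tfrac{c}{2a_n}L_n^2$, and more generally $\E[L_{n+1}^p\mid\mathcal F_n] = L_n^p(1 - \tfrac{cp}{(p+1)a_n}L_n)$, which identifies the deterministic scaling $L_n \sim \tfrac{a}{cn}$ and drives the moment asymptotics.

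For the moment convergence $\E(\tfrac{c}{a}n)^{\alpha/\delta}\ell_n^\alpha = \E(\tfrac{c}{a}n L_n)^{\alpha/\delta} \to \tfrac{\alpha}{\delta}\Gamma(\tfrac{\alpha}{\delta})$, note that $\tfrac{\alpha}{\delta}\Gamma(\tfrac{\alpha}{\delta}) = \Gamma(1+\tfrac{\alpha}{\delta})$ is exactly the $(\alpha/\delta)$-th moment of the Weibull$(\delta)$ law, so once distributional convergence is established it remains only to upgrade it to convergence of moments via uniform integrability. For that I would establish a uniform bound $\sup_n \E(n L_n)^{q} < \infty$ for every $q>0$: using the recursion $\E[L_{n+1}^p\mid\mathcal F_n] = L_n^p(1-\tfrac{cp}{(p+1)a_n}L_n) \le L_n^p(1-\kappa L_n^{1/1}\cdot\text{const})$ together with the elementary inequality $x^p(1-\eta x) \le \big(\tfrac{p}{p+1}\big)^p \eta^{-p}\cdot\frac{1}{?}$... more cleanly, induct on $n$ to show $\E L_n^p \le C_p n^{-p}$ by plugging the induction hypothesis into $\E L_{n+1}^p \le \E L_n^p - c'\E L_n^{p+1}$ and using $\E L_n^{p+1} \ge (\E L_n^p)^{(p+1)/p}$ (Jensen/power-mean) to close the recursion — this is the standard argument for such ``$1/n$'' decay. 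This gives the needed uniform integrability for all $\alpha$.

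The main obstacle is the first part: making rigorous the comparison between the genuinely recursive process (where the success probability depends on the current state $\ell_n$, creating dependence) and an i.i.d.\ minimum, and controlling the randomness of $a_n$. The $a_n$ issue is handled by the squeeze described above (monotone convergence $a_n\downarrow a$ plus monotone coupling in the parameter $a$). The state-dependence is the genuinely delicate point: I expect the right technical device is either (a) a time-change / embedding showing that the successive ``record-like'' decrements of $\ell_n^\delta$ mimic the order statistics of a uniform sample of size $\asymp n$, or (b) a direct analytic study of the recursion for $\E[\e^{-\lambda n L_n}]$ showing it converges to the Laplace transform $\int_0^\infty \e^{-\lambda t}\,\d(1-\e^{-t^\delta})^{\!*}$... — concretely, setting $\phi_n(\lambda) = \E \e^{-\lambda n L_n}$ and Taylor-expanding the one-step recursion yields $\phi_{n+1}(\lambda) \approx \phi_n(\lambda\tfrac{n}{n+1}) + \tfrac{1}{n}\psi(\phi_n)$ for an explicit $\psi$, a discrete approximation to an ODE whose fixed point is the Weibull transform. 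I would present the argument along the lines of the proof of Theorem~1 in \cite{AKV}, adapting it to the extra parameter $\delta$ and the random $a_n$, since that reference already contains the extreme-value backbone.
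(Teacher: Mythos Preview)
Your overall architecture — squeeze $a_n$ between $a$ and $a+\varepsilon$, compare $\ell_n$ to an i.i.d.\ minimum, and upgrade distributional convergence to moment convergence via uniform integrability — matches the paper's. But you are missing the one-line observation that dissolves precisely what you flag as ``the genuinely delicate point'' (the state-dependence of the success probability), and without it your proposal stays at the level of heuristics.

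Compute, for $x \le \ell_n$,
\[
\p\{\ell_{n+1}\le x \mid \mathcal F_n\}
= \frac{c\,\ell_n^\delta}{a_n}\,\p\{V_{n+1}\le x/\ell_n\}
= \frac{c\,\ell_n^\delta}{a_n}\cdot\Big(\frac{x}{\ell_n}\Big)^\delta
= \frac{c\,x^\delta}{a_n}.
\]
The $\ell_n$ cancels: conditionally on $\mathcal F_n$, $\ell_{n+1}\stackrel{\mathcal D}{=}\min\{\ell_n,Y_{n+1}\}$ where $Y_{n+1}^\delta$ is Uniform$[0,a_n/c]$, and the law of $Y_{n+1}$ depends only on $a_n$, not on $\ell_n$. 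This is exactly how the paper proceeds. Once you have this, the stochastic comparison is immediate: since $a\le a_n$, one has $Y_{n+1}\stgeq U^{(0)}$ with $(U^{(0)})^\delta\sim\mathrm{Uniform}[0,a/c]$, hence $\ell_n\stgeq \min\{U^{(0)}_1,\dots,U^{(0)}_n\}$ and the Weibull lower bound follows from classical extreme-value theory. For the upper bound the paper conditions on $\{a_{\lfloor\beta n\rfloor}<a+\varepsilon\}$ (probability $\to 1$) and on that event bounds $\ell_n$ from above by a minimum of $\lfloor(1-\beta)n\rfloor$ i.i.d.\ copies of $U^{(\varepsilon)}$; letting $\varepsilon,\beta\downarrow 0$ finishes. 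Uniform integrability is then a one-line consequence of the global bound $\ell_n\stleq M_n^{(\eta)}$ coming from $a_n\le a+\eta$.

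By contrast, your product representation $\ell_n=\prod V_k^{\omega_k}$, the Laplace-transform recursion for $\E\e^{-\lambda n L_n}$, and the Jensen-based induction $\E L_{n+1}^p\le \E L_n^p - c'(\E L_n^p)^{(p+1)/p}$ are all aimed at working around a dependence that is not actually there. Some of these could be pushed through (your moment recursion is correct and does yield $\E L_n^p=O(n^{-p})$), but the distributional-convergence part of your sketch never closes: the ``discrete ODE for $\phi_n$'' and the ``time-change/embedding'' remain programmatic. Insert the $\min$-representation above and all of that machinery becomes unnecessary.
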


With the help of these lemmas we can analyze the speed of the process.

\begin{theorem} \label{thm:1dim-rapid}
Assume that for the distribution of $X$ we have
\[
\p \{ 2 \min \{ X, 1 -X \} \leq x \} = x^\delta, \quad x \in [0,1],
\]
for some $\delta >0$. Then as $n \to \infty$
\[
4 n^{1/\delta} \left( r_n - \frac{1}{2} \right) \stackrel{\mathcal{D}}{\longrightarrow}
\mathrm{ Weibull}(\delta),
\]
i.e.~for any $x > 0$
\[
\p \left\{  4 n^{1/\delta} \left( r_n - \frac{1}{2} \right) > x \right\}
\to \e^{-x^\delta}.
\]
Moreover, for any $\alpha >  0$
\[
\lim_{n \to \infty} \E 4^{\alpha} n^{\alpha / \delta} \left( r_n - \frac{1}{2} \right)^{\alpha}
= \frac{\alpha}{\delta} \Gamma \left( \frac{\alpha}{\delta} \right).
\]
\end{theorem}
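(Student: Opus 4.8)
The plan is to reduce Theorem~\ref{thm:1dim-rapid} to the already-established Lemma~\ref{lemma:recursion} by identifying the correct quantities $\ell_n$, $a_n$, $c$, $\delta$. First I would set $\ell_n = r_n - \tfrac12 \ge 0$, which measures how far $r_n$ is from its terminal value $\tfrac12$; note $\ell_0 = r_0 - \tfrac12 = \tfrac12$. The recursion \eqref{eq:gen-rho-recursion} says that on the event $\min\{X_{n+1}, 1-X_{n+1}\} \le 1 - \tfrac{1}{2r_n}$ we have $r_{n+1} = \tfrac12 + r_n \min\{X_{n+1}, 1-X_{n+1}\}$, i.e. $\ell_{n+1} = r_n \min\{X_{n+1}, 1-X_{n+1}\} = r_n \cdot \tfrac12 \cdot (2\min\{X_{n+1},1-X_{n+1}\})$, and otherwise $r_{n+1} = r_n$, i.e. $\ell_{n+1} = \ell_n$. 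So I would write $M_{n+1} := 2\min\{X_{n+1}, 1-X_{n+1}\}$, which by hypothesis has distribution function $x^\delta$ on $[0,1]$; these are iid and play the role of $V_{n+1}$.

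The key algebraic step is to rewrite things so that the ``shrink'' update is exactly $\ell_{n+1} = \ell_n V_{n+1}$ and the probability of shrinking has the form $c\,\ell_n^\delta / a_n$. Since $r_n = \tfrac12 + \ell_n$, I have $r_n M_{n+1} = (\tfrac12 + \ell_n) M_{n+1}$, which is \emph{not} of the form $\ell_n \cdot(\text{something iid})$. The right move, matching the structure of \eqref{eq:ell-recursion}, is to observe that conditionally on the event that the update is non-trivial, the distribution is governed by $M_{n+1}$ conditioned to be small. Concretely, the event $\{r_{n+1} \ne r_n\}$ is $\{M_{n+1} \le 2(1 - \tfrac{1}{2r_n})\} = \{M_{n+1} \le 2 - \tfrac{1}{r_n}\} = \{M_{n+1} \le \tfrac{2\ell_n}{\tfrac12 + \ell_n}\}$; call this threshold $t_n = \tfrac{2\ell_n}{1/2 + \ell_n}$. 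Its probability is $t_n^\delta = \tfrac{(2\ell_n)^\delta}{(1/2+\ell_n)^\delta}$. So I would take $c = 2^\delta$ and $a_n = (\tfrac12 + \ell_n)^\delta = r_n^\delta$; then $a_n$ is a bounded nonnegative $\mathcal F_n$-measurable sequence decreasing a.s.\ to $a = (\tfrac12)^\delta = 2^{-\delta} > 0$ (it decreases since $\ell_n \downarrow$; boundedness holds as $r_n \le r_0 = 1$). Finally, by Lemma~\ref{lemma:corollary} (the ``if'' direction, applied with $Y = X_{n+1}$; continuity of $F$ follows from $\p\{M \le x\} = x^\delta$), conditionally on $\{M_{n+1} \le t_n\}$ the variable $M_{n+1}$ has the same law as $t_n \cdot M_{n+1}'$ where $M_{n+1}'$ has distribution function $x^\delta$; hence on the shrink event $\ell_{n+1} = r_n M_{n+1} \stackrel{\mathcal D}{=} r_n \cdot t_n \cdot M_{n+1}' = 2\ell_n M_{n+1}' \cdot$\,\dots\ — here I need to be a bit careful: $r_n t_n = r_n \cdot \tfrac{2\ell_n}{r_n} = 2\ell_n$, so $\ell_{n+1} = \ell_n \cdot V_{n+1}$ with $V_{n+1} = 2M_{n+1}'$? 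No: $V_{n+1}$ must be supported in $[0,1]$ with d.f.\ $x^\delta$. The clean statement is that $\ell_{n+1}\mid(\text{shrink}) \stackrel{\mathcal D}{=} 2\ell_n V_{n+1}$ is wrong by the factor; rather $\ell_{n+1} = r_n M_{n+1}$ and conditioned on shrinking $M_{n+1}/t_n$ has d.f.\ $x^\delta$, so $\ell_{n+1} \stackrel{\mathcal D}{=} r_n t_n V_{n+1} = 2\ell_n V_{n+1}$. To absorb the extra $2$ one rescales: working with $\widetilde\ell_n = 2\ell_n$ gives $\widetilde\ell_0 = 1$ and $\widetilde\ell_{n+1} = \widetilde\ell_n V_{n+1}$ on the shrink event, with shrink probability $t_n^\delta = (\widetilde\ell_n/r_n)^\delta = \widetilde c\,\widetilde\ell_n^\delta / a_n$ where $\widetilde c = 1$ and $a_n = r_n^\delta \downarrow 2^{-\delta}$; this is precisely \eqref{eq:ell-recursion} with $\delta$, $c = 1$, $a = 2^{-\delta}$.

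Then Lemma~\ref{lemma:recursion} applies verbatim to $\widetilde\ell_n = 2\ell_n = 2(r_n - \tfrac12)$: it gives $(\tfrac{c}{a} n)^{1/\delta}\widetilde\ell_n = (2^\delta n)^{1/\delta} \cdot 2(r_n - \tfrac12) = 2^{1}\cdot 2 \cdot n^{1/\delta}(r_n - \tfrac12) = 4 n^{1/\delta}(r_n - \tfrac12) \to \mathrm{Weibull}(\delta)$ in distribution, and likewise $\E\big(4 n^{1/\delta}(r_n - \tfrac12)\big)^\alpha \to \tfrac{\alpha}{\delta}\Gamma(\tfrac{\alpha}{\delta})$ for every $\alpha > 0$, which is exactly the assertion. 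The final restatement in terms of the tail $\p\{4 n^{1/\delta}(r_n - \tfrac12) > x\} \to \e^{-x^\delta}$ is just the definition of the Weibull$(\delta)$ law. The main obstacle is bookkeeping: verifying that the \emph{conditional} law of $M_{n+1}$ given the (random, $\mathcal F_n$-measurable) threshold event is $t_n$ times a fresh d.f.-$x^\delta$ variable independent of $\mathcal F_n$ — this is where Lemma~\ref{lemma:corollary} is invoked — together with tracking the factors of $2$ so that the normalizing constant comes out to exactly $4$, and confirming $a_n = r_n^\delta$ satisfies the hypotheses of Lemma~\ref{lemma:recursion} (bounded, $\mathcal F_n$-measurable, decreasing a.s.\ to $a = 2^{-\delta} > 0$), which follows from $r_n \in [\tfrac12, 1]$ and $r_n \downarrow$ a.s.
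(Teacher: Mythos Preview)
Your approach is correct and is essentially the same as the paper's: rewrite the radius recursion for $\ell_n = r_n - \tfrac12$ using the self-similarity of the law $x^\delta$ (the paper cites Lemma~\ref{lemma:konnyu}(i); you cite Lemma~\ref{lemma:corollary}, which amounts to the same thing for this purpose), and then feed the resulting recursion into Lemma~\ref{lemma:recursion} with $a_n = r_n^\delta \downarrow 2^{-\delta}$.

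One bookkeeping slip to clean up: midway you write $\ell_{n+1} = r_n M_{n+1}$, but from your own first line $\ell_{n+1} = \tfrac{r_n}{2} M_{n+1}$. With the correct factor, conditioning on the shrink event gives $\ell_{n+1} \stackrel{\mathcal D}{=} \tfrac{r_n}{2}\, t_n V_{n+1} = \ell_n V_{n+1}$ directly, so there is no ``extra $2$'' to absorb. Your rescaling $\widetilde\ell_n = 2\ell_n$ is then harmless (it merely arranges $\widetilde\ell_0 = 1$), and your final identification $c = 1$, $a_n = r_n^\delta$, $a = 2^{-\delta}$, hence $(c n/a)^{1/\delta}\,\widetilde\ell_n = 4 n^{1/\delta}(r_n - \tfrac12)$, is correct. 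The paper simply skips the rescaling and applies Lemma~\ref{lemma:recursion} to $\ell_n$ itself with $c = 2^\delta$ and the same $a_n = r_n^\delta$, giving the identical $c/a = 4^\delta$.
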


\begin{proof}
Using the assumption and Lemma \ref{lemma:konnyu} (i) we see that 
(\ref{eq:gen-rho-recursion}) can be rewritten as
\begin{equation} \label{spec-rho-recursion}
 \ell_{n+1} = \left\{
 \begin{array}{ll}
  \ell_n V_{n+1}, & \textrm{w.p. } (2 - r_n^{-1})^\delta, \\
  \ell_n, & \textrm{w.p. } 1 - (2 - r_n^{-1})^\delta,
 \end{array} \right.
\end{equation}
with $\ell_n = r_n - 1/2$, and $V, V_1, \ldots$ are iid, $\p \{ V \leq x \} = x^\delta$,
$x \in [0,1]$. Now the theorem follows from Lemma \ref{lemma:recursion},
with $a_n = r_n^\delta \downarrow 1/2^\delta = a$ and $c= 2^\delta$.
\end{proof}

To determine the limit distribution of the center
consider the thinned process $(\widetilde Z_n, \widetilde r_n)$,
which is obtained from the original process $(Z_n, r_n)$ by dropping those
steps when nothing changes, i.e.~when $r_n = r_{n+1}$. Clearly, the limit
of the center is not affected. After some
calculation we obtain the recursion
\begin{equation} \label{gen-recursion}
\begin{split}
\widetilde Z_{n+1} & = \widetilde Z_n +
\frac{2 \widetilde r_n \max \{ X_{n+1}, 1 - X_{n+1} \} - 1}{2}  
\mathrm{sgn} \, (X_{n+1} - 1/2),
\\
  \widetilde r_{n+1} & = \frac{1}{2} + \widetilde r_n \min \{ X_{n+1}, 1 - X_{n+1} \}, 
 \end{split}
\end{equation}
where $X_{n+1}$ has the distribution of $X$ conditioned on 
$\min \{ X, 1- X \} < 1 -  (2 \widetilde r_n)^{-1}$.

Note that in (\ref{eq:df-form}) in Lemma \ref{lemma:corollary}
for $c=1$ the distribution
is concentrated on $[0,1/2]$, in which case the center always moves
towards $-1/2$, so the limit distribution of the center is degenerate at
$-1/2$. Similarly, for $c=0$ the limit is deterministic $1/2$. In the following
theorem we exclude these cases. 

For $\alpha > 0, \beta > 0$ the random variable $X$ has  beta$(\alpha, \beta)$ 
law, if its density is $x^{\alpha - 1} (1 - x)^{\beta- 1} B(\alpha, \beta)^{-1}$,
$x \in (0,1)$, where $B(x, y)=\Gamma(x)\Gamma(y)/\Gamma(x+y)$ is the usual Beta function.

\begin{theorem} \label{thm:1dim-center}
Let us assume that for some $c \in (0,1)$ and $\delta > 0$
(\ref{eq:df-form}) holds. 
Then the distribution of $Z$ is
the translated beta$(\delta(1-c), \delta c)$ law, i.e.~its density function is
\[
f_{\delta, c}(x) = \frac{\Gamma(\delta)}{\Gamma(\delta(1-c)) \Gamma(\delta c)}
(1/2 + x)^{\delta (1-c) -1} (1/2 - x)^{\delta c -1}, \quad x \in (-1/2, 1/2).
\]
\end{theorem}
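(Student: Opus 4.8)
The plan is to work with the thinned recursion \eqref{gen-recursion}. Under hypothesis \eqref{eq:df-form}, Lemma~\ref{lemma:corollary} tells us that the sign variable $\varepsilon_{n+1} := \mathrm{sgn}(X_{n+1} - 1/2)$ and the ``size'' variable $M_{n+1} := \max\{X_{n+1}, 1-X_{n+1}\}$ become independent once we condition on a move occurring, because conditioning on $\min\{X,1-X\} < 1-(2\widetilde r_n)^{-1}$ is exactly conditioning on $2\min\{X,1-X\} \le a$ with $a = 2 - \widetilde r_n^{-1}$, and Lemma~\ref{lemma:konnyu}(i) shows this conditioning just rescales $2\min\{X,1-X\}$ — in particular it does not disturb the independence of the sign. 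The first step is therefore to record that, in the thinned process, $\varepsilon_1, \varepsilon_2, \ldots$ are i.i.d.\ with $\p\{\varepsilon = +1\} = 1-c$, $\p\{\varepsilon = -1\} = c$ (from the mass split in \eqref{eq:df-form}), and are independent of the entire sequence $(\widetilde r_n)$ and of the size variables. Write $\widetilde r_n = 1/2 + \ell_n$ and unfold the first line of \eqref{gen-recursion}: since $\widetilde Z_0 = 0$ and $\widetilde Z_\infty = Z$,
\[
Z = \sum_{n=0}^{\infty} \varepsilon_{n+1} \cdot \frac{2\widetilde r_n M_{n+1} - 1}{2}.
\]

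The second step is to identify the increments. A short computation from the two lines of \eqref{gen-recursion} gives $\widetilde r_{n+1} - \widetilde r_n M_{n+1} = \widetilde r_{n+1} - \widetilde r_n(1 - \min\{X_{n+1},1-X_{n+1}\})$... more usefully, $2\widetilde r_n M_{n+1} - 1 = 2\widetilde r_n - 2\widetilde r_{n+1} + (2\widetilde r_{n+1} - 2\widetilde r_n M_{n+1} - 1)$; using $\widetilde r_{n+1} = 1/2 + \widetilde r_n \min\{X_{n+1},1-X_{n+1}\}$ and $M_{n+1} + \min\{X_{n+1},1-X_{n+1}\} = 1$ one gets the clean telescoping identity $\tfrac12(2\widetilde r_n M_{n+1} - 1) = \widetilde r_n - \widetilde r_{n+1} = \ell_n - \ell_{n+1}$. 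Hence
\[
Z = \sum_{n=0}^{\infty} \varepsilon_{n+1} (\ell_n - \ell_{n+1}),
\]
an almost surely convergent series (the partial sums are bounded by $\ell_0 = 1/2$ and $\ell_n \downarrow 0$ by Lemma~\ref{lemma:recursion}). Because the $\varepsilon_j$ are independent of the $\ell_j$'s, we may first condition on the whole trajectory $(\ell_n)$ and treat $Z$ as a random signed series with prescribed (random) step sizes $\Delta_n := \ell_n - \ell_{n+1} \ge 0$, $\sum \Delta_n = 1/2$.

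The third and main step is to show this mixture is exactly the stated translated beta law. Shift to $W := 1/2 + Z = \sum_{n\ge 0} \big(\Delta_n \unit\{\varepsilon_{n+1}=+1\} + \Delta_n \cdot 0 \cdot \unit\{\varepsilon_{n+1}=-1\}\big) + \sum \Delta_n \unit\{\varepsilon=-1\}\cdot$... cleaner: $W = \sum_{n\ge0}\Delta_n \cdot \tfrac{1+\varepsilon_{n+1}}{2} + \tfrac12\sum_{n\ge0}\Delta_n(1-\tfrac{1+\varepsilon_{n+1}}{2})$, i.e.\ $W$ is a convex combination, with weights $\Delta_n/(1/2) \cdot$ adjusted, of Bernoulli$(1-c)$ choices of the ``cut points''. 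The right framework is the Pólya-urn / stick-breaking characterization of the Dirichlet–beta distribution: the multiplicative structure $\ell_{n+1} = \ell_n V_{n+1}$ on the moving steps means $\ell_n = \prod_{j\le n} V_j'$ for i.i.d.-type factors, so $\Delta_n/\ell_n = 1 - V_{n+1}'$ are (asymptotically) i.i.d.\ beta-distributed residual fractions, which is precisely a GEM/stick-breaking representation; attaching an independent $\{+,-\}$ label with probabilities $(1-c, c)$ to each stick and summing the ``$+$'' sticks is the classical construction yielding a beta$(\delta(1-c), \delta c)$ variable. Rather than invoke this abstractly, I would nail it down by computing the Mellin transform (or ordinary moments) $\E W^k$: condition on the trajectory, expand the $k$-th power of the series, use independence of the $\varepsilon$'s to reduce to products of $\Delta_n$'s weighted by powers of $(1-c)$, then take expectations over the trajectory using the moment formula of Lemma~\ref{lemma:recursion} (with the appropriate bookkeeping of the joint law of the $\ell_n$). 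One checks the resulting sequence matches the moments $\E W^k = \frac{B(\delta(1-c)+k,\ \delta c)}{B(\delta(1-c),\ \delta c)}$ of the beta$(\delta(1-c),\delta c)$ law, which determines the distribution since it is compactly supported.

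The step I expect to be the real obstacle is the last one: turning the random-weight signed series into a recognizable distribution. The honest route is probably a \emph{self-consistency / distributional fixed-point} argument rather than brute moment matching. Observe that after one thinned step, $W = \tfrac{1+\varepsilon_1}{2}\cdot(1-M_1) + M_1 \cdot \big(\tfrac{1-\varepsilon_1}{2} + \varepsilon_1 W'\big)$ — more precisely the pair $(\varepsilon_1, M_1)$ peels off and the remainder $W' = 1/2 + Z'$ has the \emph{same} law as $W$ and is independent of $(\varepsilon_1, M_1)$, with $M_1 \sim$ (the conditioned max, whose law one reads off from \eqref{eq:df-form}) and $2\min\{X_1,1-X_1\}=2(1-M_1)$ having a power law with exponent $\delta$. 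This yields a distributional identity of the form $W \stackrel{\mathcal D}{=} B\cdot W' + (1-B)\cdot \mathrm{Bernoulli}(1-c)$ with $B$ having a beta-type law independent of $W'$ — exactly the fixed-point equation characterizing the beta$(\delta(1-c),\delta c)$ distribution (this is the standard ``beta ↦ beta'' invariance behind stick-breaking). One then verifies that the claimed density $f_{\delta,c}$ solves this fixed-point equation and that the equation has a unique solution on $[0,1]$ (e.g.\ by the contraction it induces on moments, or by uniqueness of the moment problem). Carefully justifying the independence of $(\varepsilon_1, M_1)$ from the post-step process — which is where Lemma~\ref{lemma:corollary}'s independence clause and the scaling clause of Lemma~\ref{lemma:konnyu}(i) both get used, and where one must check the conditioning event $\{$a move occurs$\}$ doesn't secretly couple things — is the delicate point; everything else is bookkeeping.
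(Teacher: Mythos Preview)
Your proposal is correct and, once the moment-matching detour is discarded, is exactly the paper's argument. The paper writes the thinned recursion directly as $\widetilde Z_{n+1}=\widetilde Z_n+\xi_{n+1}\widetilde\ell_n(1-V_{n+1})$, $\widetilde\ell_{n+1}=\widetilde\ell_n V_{n+1}$ with i.i.d.\ $V_i\sim\mathrm{beta}(\delta,1)$ and independent i.i.d.\ signs $\xi_i$ (this is your telescoping identity $\tfrac12(2\widetilde r_n M_{n+1}-1)=\ell_n-\ell_{n+1}=\ell_n(1-V_{n+1})$ together with the multiplicative structure you allude to), unwinds to the series, peels off one factor to get the perpetuity $Z_\infty\stackrel{\mathcal D}{=}\tfrac12\xi_1(1-V_1)+V_1 Z_\infty$ --- which is precisely your fixed-point equation for $W=Z+\tfrac12$ --- and then cites Hitczenko--Letac / Sethuraman for the beta identification and uniqueness rather than verifying by hand.
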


\begin{proof}
By Lemma \ref{lemma:konnyu}
and (\ref{gen-recursion}) we obtain the recursion
\begin{equation} \label{eq:spec-recursion}
 \begin{split}
  \widetilde Z_{n+1} & = \widetilde Z_n +  \xi_{n+1} \widetilde \ell_n ( 1- V_{n+1} ), \\
  \widetilde \ell_{n+1} & = \widetilde \ell_n V_{n+1}, 
 \end{split}
\end{equation}
where $\widetilde \ell_n = \widetilde r_n -1/2$, and $\xi_1, \xi_2, \ldots$ are iid random
variables, such that $\p \{ \xi_1 = 1 \} = 1- c = 1- \p \{ \xi_1 = -1 \}$, and
independently from $\{ \xi_i \}_{i=1}^\infty$, the sequence $V_1, V_2, \ldots$
are iid $\beta(\delta,1)$ random variables, i.e.~with distribution function
$\p \{ V \leq x \}  = x^\delta$.
The initial value is $(\widetilde Z_0, \widetilde \ell_0) = (0,1/2)$.

Formula (\ref{eq:spec-recursion}) implies the infinite series representation of the
limit
\begin{equation} \label{eq:infinite-repr}
Z_\infty= \frac{1}{2} \sum_{i=1}^\infty \xi_i V_1 \ldots V_{i-1} (1- V_i),
\end{equation}
and thus the distributional equation perpetuity
\begin{equation} \label{eq:perpet}
Z_\infty \stackrel{\mathcal{D}}{=} \frac{1}{2} \xi_1 ( 1 - V_1) + V_1 Z_\infty,
\end{equation}
where on the right-hand side $V_1, \xi_1, Z_\infty$ are independent.

Corollary 1.2 in Hitczenko and Letac \cite{HL} (or the proof of Theorem 3.4 in Sethuraman \cite{Seth})
implies that $Z_\infty +1/2$ has
$\beta( \delta (1-c), \delta c)$ distribution.
\end{proof}

Note that once we have the infinite series representation (\ref{eq:infinite-repr})
the proof can be
finished using the properties of \textrm{GEM}$(\delta)$ (or Poisson--Dirichlet)
law; see Hirth \cite{Hirth}, or Bertoin \cite{bertoin} Section 2.2.5.

\medskip

Distributional equations of type 
\[
R \stackrel{\mathcal{D}}{=} Q + MR, \quad R \text{ independent of } (Q,M),
\]
where $R, Q$ are random vectors, and $M$ is a random variable,
are called \textit{perpetuities}. Equation (\ref{eq:perpet}) is an example. 
Necessary and sufficient conditions for the existence of a unique solution
of one-dimensional perpetuities is given by Goldie and Maller \cite{goldiemaller}.
However, in special cases (for example for $M \in [-1,1]$)
the existence of a unique solution in any dimension was known earlier,
see Lemma 3.3 by Sethuraman \cite{Seth}. Therefore, in (\ref{eq:perpet}) above,
or in $d$ dimension in (\ref{eq:d-perpet}) below, the assertion that certain
distribution $G$ satisfies the perpetuity equation is equivalent to saying
that the perpetuity equation has a unique solution $G$.

The perpetuities (\ref{eq:perpet}) and (\ref{eq:d-perpet}) are interesting
in their own right, because there are relatively few perpetuities when the exact
solution is known. The results of Sethuraman \cite{Seth} (proof of Theorem 3.4;
see also Theorem 1.1 in \cite{HL}) cover those equations
which appear in our investigations. For more general perpetuity equations
with exact solutions we refer to the recent paper by Hitczenko and Letac \cite{HL}.

\section{The cube} \label{sect:cube}

In the cube process $K = K_0 = [-1,1]^d$. Now the limiting
convex body is a cube of unit edgelength. Denote $m_1(n), \ldots, m_d(n)$ the 
edgelengths of the rectangular box $K_n$, and $(Z_1(n),\ldots,  Z_d(n))$ the center
of $K_n$. Properties of the uniform distribution imply
that the processes $(Z_1(n), m_1(n))$,$\ldots$, $(Z_d(n), m_d(n))$ are $d$ independent copies of the segment process. Therefore
the following theorem is a consequence of Theorem 1 and 2 in \cite{AKV}.

\begin{theorem} \label{th:rectangle}
For the speed of the cube process we have
\[
2n \,
\begin{pmatrix}
m_1(n ) - 1 \\\vdots \\ m_d(n) - 1 
\end{pmatrix}
\stackrel{\mathcal{D}}{\longrightarrow}
\begin{pmatrix}
W_1 \\ \vdots \\ W_d
\end{pmatrix},
\]
where $W_1, \ldots, W_d$ are independent exponential random variables with parameter 1.
For the maximum of the edgelengths $m_n = \max\{ m_1(n),\ldots,  m_d(n) \}$ we have
\[
2 n (m_n - 1) \stackrel{\mathcal{D}}{\longrightarrow} W, 
\]
where $\p \{ W \leq x \} = (1 - \e^{-x} )^d$, $x \geq 0$.

For the limit distribution of the center
\[
\begin{pmatrix}
Z_1(n) \\ \vdots \\ Z_d (n)  
\end{pmatrix}
\stackrel{\mathcal{D}}{\longrightarrow} 
\begin{pmatrix}
 Z_1 \\ \vdots \\ Z_d 
\end{pmatrix},
\]
where $Z_1, \ldots, Z_d$ are independent translated arcsine random variables, that is with density
function
\[
\frac{1}{ \pi \sqrt{(1/2 +x)(1/2-x)}}, \quad x \in \left( -\frac{1}{2} , \frac{1}{2} \right).
\]
\end{theorem}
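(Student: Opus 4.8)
The plan is to reduce the $d$-dimensional cube process to $d$ independent copies of the one-dimensional segment process studied in \cite{AKV}, and then invoke Theorems~1 and~2 there. First I would observe that both the target body $K=[-1,1]^d$ and the initial body $K_0=[-1,1]^d$ are axis-parallel boxes, and that this property is preserved under the process: if $K_n$ is an axis-parallel box $\prod_{j=1}^d [a_j, b_j]$, then for a point $p_{n+1}=(p_{n+1}^{(1)}, \ldots, p_{n+1}^{(d)})$ we have $p_{n+1}+K = \prod_{j=1}^d [p_{n+1}^{(j)}-1, p_{n+1}^{(j)}+1]$, so $K_{n+1} = K_n \cap (p_{n+1}+K) = \prod_{j=1}^d \big([a_j,b_j]\cap[p_{n+1}^{(j)}-1, p_{n+1}^{(j)}+1]\big)$ is again an axis-parallel box. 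Hence $K_n$ is determined by the $d$ coordinate intervals, and the edgelengths $m_j(n)$ and centers $Z_j(n)$ evolve separately in each coordinate.

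The key point is the independence of the coordinate processes. Since $p_{n+1}$ is uniform on the box $K_n$, its coordinates $p_{n+1}^{(1)}, \ldots, p_{n+1}^{(d)}$ are independent, and the $j$-th coordinate is uniform on $[a_j, b_j]$, i.e.\ uniform on the $j$-th edge of $K_n$. Therefore, conditionally on $\mathcal{F}_n$, the updates in the $d$ coordinates are independent, and each coordinate process $(Z_j(n), m_j(n))$ is exactly a (rescaled) copy of the one-dimensional process from \cite{AKV}: starting from $[-1,1]$, at each step pick a uniform point in the current interval and intersect with the translate of $[-1,1]$. By induction on $n$, using the conditional independence at each step, the $d$ processes $\{(Z_j(n), m_j(n))\}_{n\ge 0}$, $j=1,\ldots,d$, are jointly independent.

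Given this, the three assertions follow immediately. By Theorem~2 in \cite{AKV}, for each fixed $j$, $2n(m_j(n)-1) \to W_j$ in distribution with $W_j$ standard exponential; by independence of the coordinates the joint convergence to the vector $(W_1,\ldots,W_d)$ of independent standard exponentials holds. For the maximum edgelength, $m_n = \max_j m_j(n)$, so $2n(m_n-1) = \max_j 2n(m_j(n)-1) \to \max_j W_j =: W$, and since the $W_j$ are i.i.d.\ standard exponential, $\p\{W\le x\} = (1-\e^{-x})^d$ for $x\ge 0$ (here one should note the standard fact that the continuous-mapping/convergence-of-maxima argument applies since $x\mapsto\max$ is continuous). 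Finally, by Theorem~1 in \cite{AKV} each $Z_j(n)$ converges in distribution to a translated arcsine variable $Z_j$ on $(-1/2,1/2)$ with the stated density, and independence of the coordinate processes gives joint convergence to $(Z_1,\ldots,Z_d)$ with independent components.

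The only genuine obstacle is the bookkeeping needed to make the ``preservation of box structure plus conditional independence implies joint independence of the coordinate processes'' argument fully rigorous; this is a routine induction, but one must be careful that the uniform point's coordinates are independent \emph{and} that the acceptance/rejection (the two cases in the one-dimensional recursion) in one coordinate does not couple to another — which is true precisely because the intersection factorizes coordinatewise. Once that is in place, no further work is required beyond quoting \cite{AKV}; there is no new probabilistic content, which is why the theorem is stated as a consequence.
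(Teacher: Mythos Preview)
Your proposal is correct and follows exactly the paper's approach: the paper simply notes that properties of the uniform distribution on a box make the coordinate processes $(Z_j(n),m_j(n))$, $j=1,\ldots,d$, independent copies of the segment process, and then cites Theorems~1 and~2 of \cite{AKV}. You have spelled out the box-preservation and conditional-independence steps in more detail than the paper does, but the route is identical.
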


{\it Remark.} Similarly, the results obtained in Section~\ref{sect:1dim} can be generalized for a `non-uniform cube process'. The details are left to the interested reader.

\section{The simplex}\label{sect:simplex}

Now we turn to the simplex process in any dimension.

Let $K$ be a regular $d$-dimensional simplex with centroid $(0,0, \ldots, 0)$ and
vertices $(\e_0, \e_1, \ldots, \e_d)$, such that $\e_0 =  (1, 0, \ldots, 0).$
Let denote $\rho_d = 1/d$ the radius of the inscribed sphere of $K$.

Let the initial simplex be $K_0 =\frac{2}{d+1} K$ (for reasons explained below),
and for $K_n$ given, choose a
random point $p_{n+1}$ uniformly in $K_n$ and let $K_{n+1} = K_n \cap (p_{n+1} + K)$.
Let $m_n$ denote the height of $K_n$. 
Then $K_n$ is a nested sequence of regular simplicies and
the limit object is a regular simplex with height $\rho_d$.

It turns out that this process can be investigated by the same methods as
for $d=1$, in case of the segment process, in \cite{AKV}.
The idea is that for the simplex in any dimension
the process is `self-similar', i.e.~after each step the process is a translated
and scaled version of the original one.

\subsection{The rapidness of the process}

If in the $(n+1)$st step the point $p_{n+1}$ falls close to the center, then
nothing happens, i.e.~$K_{n+1} =  K_n$. The `change regions' are 
$d+1$ congruent, regular simplicies of height $m_n-\rho_d$, each of them sits at a vertex of $K_n$. Note that since the
height of $K_n$ is $\leq 2 \rho_d$ then these simplicies are disjoint, so the process
is simpler. This is the reason we assume $K_0 = \frac{2}{d+1} K$, since
its height $m_0 = 2 \rho_d$.
Although, if we would start with a larger $K_0$, as $m_n \downarrow \rho_d$ a.s., in a random number of steps
the height of $K_n$ would be $\leq 2 \rho_d$, thus the assumption $K_0 = \frac{2}{d+1} K$ has no effect on the rapidness
of the process.

\begin{theorem}
For the height process $m_n$
\[
\frac{(d+1)^{1/d}}{\rho_d} n^{1/d} (m_n - \rho_d)
\stackrel{{\mathcal{D}}}{\longrightarrow} \mathrm{Weibull}(d).
\]
Moreover, for any $\alpha > 0$
\[
\lim_{n \to \infty} \E \frac{[(d+1) n ]^{\alpha/d}}{\rho_d^\alpha} (m_n - \rho_d)^\alpha
= \frac{\alpha}{d} \Gamma \left( \frac{\alpha}{d} \right).
\]
\end{theorem}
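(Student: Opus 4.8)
The goal is to reduce the height process $m_n$ to a recursion of the form \eqref{eq:ell-recursion} and then invoke Lemma~\ref{lemma:recursion}. The natural variable is $\ell_n = m_n - \rho_d$, the excess height, with $\ell_0 = 2\rho_d - \rho_d = \rho_d$; note the lemma requires $\ell_0 = 1$, so one should actually work with the normalized quantity $\ell_n / \rho_d$, or equivalently rescale at the end. First I would describe the geometry of one step. Conditioned on $K_n$ (a regular simplex of height $m_n$), the point $p_{n+1}$ is uniform in $K_n$. The event that $K_{n+1} \neq K_n$ is the event that $p_{n+1}$ lands in one of the $d+1$ disjoint ``corner'' simplices, each similar to $K_n$ with ratio $(m_n - \rho_d)/m_n = \ell_n / m_n$, hence each of volume $(\ell_n/m_n)^d \,\mathrm{vol}(K_n)$; summing, the probability of a change is $(d+1)(\ell_n/m_n)^d$. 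This matches the shape $c\,\ell_n^\delta / a_n$ with $\delta = d$, $c = d+1$, and $a_n = m_n^d$, which decreases a.s.\ to $a = \rho_d^d$ — after the $\ell_0 = 1$ normalization the constants $c$ and $a$ scale accordingly but the final statement is stated in terms of $(d+1)/\rho_d^d$, which is exactly $c/a$.

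Second, I would verify the multiplicative structure of the step conditioned on a change. By self-similarity, given that $p_{n+1}$ falls in the corner simplex at vertex $v$, the new simplex $K_{n+1} = K_n \cap (p_{n+1}+K)$ is again a regular simplex, sharing the facet of $K_n$ opposite $v$, with new height $\rho_d + \ell_n \cdot U$, where $U$ is (up to an affine change of coordinates along the axis through $v$) the relative position of $p_{n+1}$ inside the corner simplex. The key computation is the distribution of $U$: since $p_{n+1}$ is uniform in a $d$-dimensional simplex and $U$ measures a linear ``height coordinate'' within it, the cross-sectional volume at height level $t$ scales like $t^{d-1}$, so $\p\{U \le x\} = x^d$ for $x \in [0,1]$. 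Thus $\ell_{n+1} = \ell_n V_{n+1}$ with $V_{n+1}$ having distribution function $x^d$ — exactly the $\beta(d,1)$ law featuring in \eqref{eq:ell-recursion} with $\delta = d$. Putting the two pieces together gives precisely the recursion \eqref{eq:ell-recursion} (after dividing through by $\rho_d$), with the $\mathcal{F}_n = \sigma(p_1,\dots,p_n)$ filtration and $a_n$ a function of $\ell_n$ alone, so the measurability hypotheses of the lemma hold trivially.

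Third, apply Lemma~\ref{lemma:recursion}. It gives $(\tfrac{c}{a} n)^{1/\delta} (\ell_n/\rho_d) \stackrel{\mathcal D}{\to} \mathrm{Weibull}(\delta)$ with $\delta = d$ and $c/a = (d+1)/\rho_d^d$, i.e. $\big(\tfrac{(d+1)n}{\rho_d^d}\big)^{1/d} \tfrac{m_n - \rho_d}{\rho_d} \stackrel{\mathcal D}{\to} \mathrm{Weibull}(d)$, which rearranges to $\tfrac{(d+1)^{1/d}}{\rho_d} n^{1/d}(m_n - \rho_d) \stackrel{\mathcal D}{\to} \mathrm{Weibull}(d)$, the first assertion. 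The moment statement follows identically from the second conclusion of Lemma~\ref{lemma:recursion} with $\alpha$ arbitrary, after multiplying through by $\rho_d^\alpha$ and collecting the constant into $[(d+1)n]^{\alpha/d}$.

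\textbf{Main obstacle.} The only genuinely nontrivial point is the geometric claim that a uniform point in $K_n$, conditioned to lie in a corner simplex, induces the new height as $\rho_d + \ell_n V$ with $\p\{V \le x\} = x^d$, and simultaneously that the corner regions are disjoint with the stated volumes. The disjointness is where the normalization $m_n \le 2\rho_d$ (guaranteed by the choice $K_0 = \tfrac{2}{d+1}K$ and monotonicity) is used — without it the corner simplices could overlap and the probabilities would not add cleanly. Establishing the $x^d$ law requires identifying the correct affine ``height'' coordinate inside the corner simplex and checking that the new simplex $K_n \cap (p_{n+1}+K)$ really is the regular simplex with the asserted height (this is the self-similarity at the heart of the argument); once the right coordinates are set up it is a one-line cross-section volume computation, but setting them up carefully is the crux. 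Everything after that is bookkeeping with the constants $c = d+1$, $a = \rho_d^d$, $\delta = d$.
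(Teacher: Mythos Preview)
Your proposal is correct and follows essentially the same route as the paper: set $\ell_n=m_n-\rho_d$, identify the change probability as $(d+1)(\ell_n/m_n)^d$ via the $d+1$ disjoint corner simplices (disjointness using $m_0=2\rho_d$), check that conditioned on a change the multiplicative factor $V=1-h$ has law $x^d$ on $[0,1]$, and then invoke Lemma~\ref{lemma:recursion} with $\delta=d$, $c=d+1$, $a_n=m_n^d\downarrow\rho_d^d$. Your extra care about the $\ell_0=1$ normalization is harmless bookkeeping; the paper applies the lemma directly without rescaling, which is fine since the proof of Lemma~\ref{lemma:recursion} does not actually use $\ell_0=1$.
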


\begin{proof}
With disjoint change regions for the height process we have
\[
m_{n+1} =
\begin{cases}
m_n - h_{n+1} \left( m_n - \rho_d \right) & \textrm{w.p. }
(d+1) \left( 1- \frac{\rho_d}{m_n} \right)^d, \\
m_n, & \textrm{w.p. } 1 - (d+1) \left( 1- \frac{\rho_d}{m_n} \right)^d,
\end{cases}
\]
where $h_1, h_2, \ldots$ are iid random variables, with distribution function
\begin{equation} \label{H_d}
H_d (x) = \p \{ h \leq x \} = 1 - \p \{ h > x \} = 1 - (1 - x)^d,
\quad x \in [0,1],
\end{equation}
which is the distribution of the distance from the
base of a uniformly distributed random point in a regular simplex with height 1,
see Figure \ref{fig:3szog}.

\begin{figure}
\begin{center}
\begin{tikzpicture}[line cap=round,line join=round,>=triangle 45,x=1.0cm,y=1.0cm]
\clip(-1.67,-0.3) rectangle (6.97,6.26);
\draw  (0,0)-- (6,0);
\draw  (6,0)-- (3,5.2);
\draw  (3,5.2)-- (0,0);
\draw  (1.75,3.03)-- (4.25,3.03);
\draw (-0.11,4.2) node {$m_n - \rho_2$};
\draw [->] (4.34,3.03) -- (4.34,3.79);
\draw [->] (4.34,3.79) -- (4.34,3.03);
\draw [->] (0.77,5.2) -- (0.77,3.03);
\draw [->] (0.77,3.03) -- (0.77,5.2);
\draw [->] (-0.69,3.03) -- (-0.69,0);
\draw [->] (-0.69,0) -- (-0.69,3.03);
\draw (-1.18,1.55) node {$\rho_2$};
\draw (5.8,3.6) node{$(m_n - \rho_2) h_{n+1}$};
\draw[color=black] (2.45,1.3) node {$K_n$};
\fill [color=black] (3.14,3.79) circle (1.5pt);
\draw[color=black] (3.1,4.1) node {$p_{n+1}$};
\end{tikzpicture} 
\caption{\label{fig:3szog}
 The triangle process}
\end{center}
\end{figure}
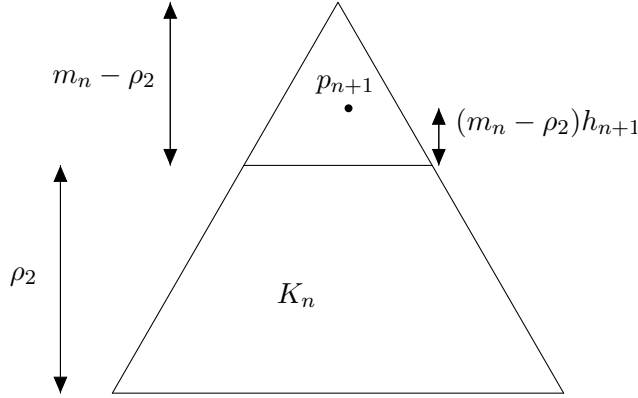

Putting $\ell_n = m_n - \rho_d$, we have $\ell_n \downarrow 0$ a.s., and
\begin{equation} \label{ell}
\ell_{n+1} =
\begin{cases}
\ell_n (1  - h_{n+1}) & \textrm{w.p. } (d+1) \left( 1- \frac{\rho_d}{m_n} \right)^d, \\
\ell_n, & \textrm{w.p. } 1 - (d+1) \left( 1- \frac{\rho_d}{m_n} \right)^d .
\end{cases}
\end{equation}
The theorem follows from Lemma \ref{lemma:recursion} with
$\delta = d, c = d+1$ and $a_n = m_n^d \downarrow \rho_d^d$.
\end{proof}

\subsection{The limit distribution of the center}

Let $\c_n$  denote the center of the regular simplex $K_n$. In this subsection
we determine the limit distribution of $\c_n$.

As we emphasized  previously the limit distribution
of $n^{1/d} (m_n - \rho_d)$ is not affected if we start from
any smaller regular simplex, in particular which has height $2 \rho_d$.
However, this is not true for the limit distribution of the center $\c_n$.
To handle the process we have to assume that the change regions are
disjoint, and so in each step the center can only move towards one of
the vertices, or stay. 

In order to investigate the limit distribution of the centroid, we can
consider the thinned (centroid, height) process $(\widetilde \c_n, \widetilde m_n)$,
skipping the steps when nothing happens. Put $\widetilde \ell_n = \widetilde m_n - \rho_d$.

Since the disjoint change regions have the same volume, in each step the center
moves towards any of the vertices with the same probability $1/(d+1)$, according
to the change region in which the chosen point falls.
The size of the shift is
$\frac{d}{d+1} \cdot \widetilde \ell_n h_{n+1}$, where
$\widetilde \ell_n h_{n+1}$ is the distance of the chosen point from the base of the change
region. See Figure \ref{fig:3szog}. Thus
\begin{equation}
\begin{split} \label{MC1-simp}
& \widetilde \c_{n+1} = \widetilde \c_n + \frac{d}{d+1} \widetilde \ell_n h_{n+1} \e_{\xi_{n+1}}, \\
& \widetilde \ell_{n+1} = \widetilde \ell_n (1 - h_{n+1}),
\end{split}
\end{equation}
where $h_1, h_2, \ldots $ are iid random variables with distribution function (\ref{H_d}),
$\xi_1, \xi_2, \ldots$ are
independent, uniformly distributed random variables on the set $\{0, 1, \ldots, d \}$,
and the initial condition is  $\widetilde \c_0 = \0$, $\widetilde \ell_0 = \rho_d$.

To obtain a more symmetric description of the center process we introduce
the barycentric coordinates. The center of the  limiting simplex
falls in $\widehat K := \frac{1}{d+ 1} K$,
i.e.~in a regular simplex with height $\rho_d$.

Put $\widehat \e_i =  \frac{1}{d+1} \e_i$,
that is $\widehat \e_0, \ldots, \widehat \e_d$ are the vertices of $\widehat K$. 
To parametrize the center we may use barycentric
coordinates in terms of $\widehat K$. That is, for $\c_n$ the
center of $K_n$, we have $\c_n = \sum_{i=0}^d \lambda_n^{i} \widehat \e_i$, with
$\sum_{i=0}^d \lambda_n^{i} =1$, $\lambda_n^{i} \geq 0$, $i=0,1, \ldots, d$.
It is well-known that this parametrization is unique.
Put $\Lambda_n = (\lambda_n^0, \ldots, \lambda_n^d) \in \R^{d+1}$.
We can rewrite (\ref{MC1-simp}) in terms of the barycentric coordinates of
$\widetilde \c_n$. After some calculation we have
\begin{equation} \label{MC1-simp-2}
\begin{split}
& \widetilde \Lambda_{n+1} = \widetilde \Lambda_n +
\frac{d}{d+1} \widetilde \ell_n h_{n+1} \v_{\xi_{n+1}}, \\
& \widetilde \ell_{n+1} = \widetilde \ell_n (1 - h_{n+1}),
\end{split} 
\end{equation}
where $\v_j$ is the constant $-1$ vector, except its $j$th coordinate
being $d$. The initial values are $\widetilde \Lambda_0 = (1/(d+1), \ldots, 1/(d+1))$,
$\widetilde \ell_0 = \rho_d$.

Before stating the theorem, we define the multidimensional Dirichlet distribution.
Let $a_0, \ldots, a_d$ be positive numbers. The random vector
$X = (X_0, \ldots, X_d)$ has Dirichlet$(a_0, \ldots, a_d)$ distribution,
if its components are nonnegative, $X_0 + \ldots + X_d = 1$, and
$(X_1, \ldots, X_d)$ has density function
\[
\frac{\Gamma(a_0 + \ldots + a_d)}{\Gamma(a_0) \ldots \Gamma(a_d)}
(1 - x_1 - \ldots -x_d)^{a_0 - 1} x_1^{a_1 - 1} \ldots
x_d ^{a_d - 1},
\]
on the set $\{ (x_1, \ldots, x_d): x_i \in (0,1), i=1,\ldots, d; \sum_{i=1}^d x_i \leq 1 \}$.

\begin{theorem} \label{thm:simlex-kp}
The barycentric coordinates of the center of the limit simplex
have $\mathrm{Dirichlet}(d/(d+1), \ldots, d/(d+1))$ distribution.
\end{theorem}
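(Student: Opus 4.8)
The strategy is to recognize the recursion (\ref{MC1-simp-2}) as a $(d+1)$-dimensional perpetuity and to identify its unique stationary solution as the Dirichlet law, exactly paralleling the one-dimensional argument in the proof of Theorem \ref{thm:1dim-center}. First I would pass to the thinned process and note that, by the distribution function (\ref{H_d}), the jump variable $h_{n+1}$ has a $\beta(1,d)$ law, so that $1 - h_{n+1}$ has a $\beta(d,1)$ law, i.e.\ distribution function $x^d$ on $[0,1]$. Writing $W_{n+1} = 1 - h_{n+1}$, the height recursion becomes $\widetilde\ell_{n+1} = \widetilde\ell_n W_{n+1}$ and the barycentric recursion becomes an affine update with random contraction ratio $W_{n+1}$. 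Since the deterministic factor $\frac{d}{d+1}\widetilde\ell_n$ can be normalized away (the height starts at $\rho_d = 1/d$, and $\frac{d}{d+1}\cdot\frac1d = \frac1{d+1}$ matches the uniform initial barycentric vector), iterating gives the series representation
\[
\Lambda_\infty = \frac{1}{d+1}\unit + \frac{1}{d+1}\sum_{i=1}^\infty W_1\cdots W_{i-1}(1 - W_i)\,\v_{\xi_i},
\]
and hence the perpetuity
\[
\Lambda_\infty \stackrel{\mathcal D}{=} \frac{1}{d+1}\unit + \frac{1}{d+1}(1 - W_1)\v_{\xi_1} + W_1\bigl(\Lambda_\infty - \tfrac{1}{d+1}\unit\bigr),
\]
with $W_1, \xi_1, \Lambda_\infty$ independent on the right.

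Next I would rewrite this in the "centered" coordinates $\Theta_\infty := \Lambda_\infty$ viewed directly as a random point of the simplex $\{x_i \ge 0, \sum x_i = 1\}$. A short computation shows $\frac{1}{d+1}\unit + \frac{1}{d+1}\v_j = \ev_j$, the $j$th standard basis vector (the $j$th vertex of the probability simplex). Therefore the perpetuity collapses to the clean \emph{stick-breaking / Sethuraman} form
\[
\Lambda_\infty \stackrel{\mathcal D}{=} (1 - W_1)\,\ev_{\xi_1} + W_1\,\Lambda_\infty,
\]
with $\xi_1$ uniform on $\{0,\dots,d\}$, $W_1$ having distribution function $x^d$, i.e.\ $1 - W_1 \sim \beta(1,d)$. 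Since $W_1 \in [0,1]$, Lemma 3.3 of Sethuraman \cite{Seth} guarantees this equation has a unique solution, so it suffices to exhibit one. By the proof of Theorem 3.4 in Sethuraman \cite{Seth} (equivalently Theorem 1.1 / Corollary 1.2 in Hitczenko and Letac \cite{HL}, the multidimensional version of the tool already used for Theorem \ref{thm:1dim-center}), the Sethuraman fixed-point equation with base measure uniform on $\{0,\dots,d\}$ and mixing weight $1 - W \sim \beta(1,\theta)$ is solved by the $\mathrm{Dirichlet}(\theta/(d+1),\dots,\theta/(d+1))$ law. Here $\theta = d$, which gives precisely $\mathrm{Dirichlet}(d/(d+1),\dots,d/(d+1))$.

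Finally I would check the two bookkeeping points that make the identification exact: (a) that the parametrization by barycentric coordinates with respect to $\widehat K$ is the right normalization, so that $\widetilde\Lambda_n \to \Lambda_\infty$ with the limit supported on the full probability simplex — this is where the choice $K_0 = \frac{2}{d+1}K$ and the disjointness of the change regions enter, ensuring (\ref{MC1-simp-2}) is valid for every $n$ and not merely eventually; and (b) the algebraic identity $\frac{1}{d+1}(\unit + \v_j) = \ev_j$ together with $\frac{d}{d+1}\rho_d = \frac{1}{d+1}$, so the affine maps in the recursion are exactly the Sethuraman maps $x \mapsto (1-W)\ev_j + Wx$ with no stray constants. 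The main obstacle is purely this reconciliation between the geometric normalization coming from the simplex (the factors $\frac{d}{d+1}$ and $\rho_d = 1/d$, and the vector $\v_j$) and the clean probabilistic form required by \cite{Seth}; once the recursion is in Sethuraman form the distributional conclusion is immediate from the cited results, just as in the one-dimensional case. No new analytic estimate is needed — the rate statement and the a.s.\ convergence of the thinned process are already handled by the machinery behind Lemma \ref{lemma:recursion}.
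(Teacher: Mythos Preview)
Your proposal is correct and is essentially the same argument as the paper's: you iterate the recursion (\ref{MC1-simp-2}) into the same infinite series, use the identity $\frac{1}{d+1}(\unit + \v_j) = \ev_j$ (the paper writes this as $\frac{1}{d+1}\v_i + \widetilde\Lambda_0 = u_i$) to reduce to the Sethuraman perpetuity $\widetilde\Lambda \stackrel{\mathcal D}{=} h\,\ev_\xi + (1-h)\widetilde\Lambda$, and then invoke Sethuraman \cite{Seth} / Hitczenko--Letac \cite{HL} with $h\sim\beta(1,d)$ to identify the Dirichlet$(d/(d+1),\dots,d/(d+1))$ law. The only difference is cosmetic --- your substitution $W=1-h$ and your explicit bookkeeping of the normalization constants $\frac{d}{d+1}\rho_d = \frac{1}{d+1}$, which the paper absorbs silently.
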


\begin{proof}
Let $\widetilde\Lambda$ be the barycentric coordinates of the center of the limit.
From (\ref{MC1-simp-2}) we obtain that
\[
\widetilde\Lambda =
\widetilde\Lambda_0 + \frac{1}{d+1} \sum_{n=0}^\infty (1 - h_1) \ldots (1 - h_n) h_{n+1} \v_{\xi_{n+1}}.
\]
Rearranging we get
\[
\begin{split}
\widetilde\Lambda & = h_1 \left( \frac{1}{d+1} \v_{\xi_1} + \widetilde\Lambda_0 \right) \\
& \phantom{=} + ( 1  - h_1)
\left[\widetilde \Lambda_0 + \frac{1}{d+1} \sum_{n=1}^\infty (1 - h_2) \ldots (1 - h_n) h_{n+1} \v_{\xi_{n+1}}  \right]. 
\end{split}
\]
Notice that the infinite sum in brackets is equal in distribution with $\widetilde\Lambda$ and it is independent of
$h_1$ and $\xi_1$. Since $\frac{1}{d+1} \v_i + \widetilde\Lambda_0 = u_i$, where $(u_i)_{i=0,\ldots, d}$ are the usual
unit vectors in $\R^{d+1}$, we obtain the distributional equality
\begin{equation} \label{eq:d-perpet}
\widetilde\Lambda \stackrel{\mathcal{D}}{=}
h u_{\xi} + (1-h)\widetilde \Lambda,
\end{equation}
where on the right-hand side $\xi, h, \widetilde \Lambda$ are independent.
Applying now Theorem 1.1 in \cite{HL} 
(or the results in the proof of Theorem 3.4 in \cite{Seth})
with $Y = h \sim \mathrm{beta}(1,d)$,
and $B = u_\xi \sim \sum_{i=0}^d \frac{1}{d+1} \delta_{u_i}$, 
we obtain obtain the theorem. 
\end{proof}

\section{Regular polygons with an odd number of vertices} \label{sect:5}

Let $k$ be an odd positive integer, and assume $k\geq 5$. Let $K$ be a regular $k$-gon
with circumradius $1$, centroid $(0,0)$,
such that $(0,1)$ is a vertex and the side $v_1 v_2$ is parallel to the $x$-axis. We denote the vectors pointing from the origin to the vertices of $K$
in the counterclockwise order by $v_1, \ldots, v_k$. (To avoid confusion, we distinguish between points and vectors.) Put $K_0 = K$, and consider the process as before. For
simplicity we usually omit $k$ from our notation, and assume that $k$ is fixed, odd, and clear from the circumstances.

Obviously, $K_n$ is a polygon for each $n$, and since it is the intersection of translated copies of
$K$, its sides are parallel to the sides of $K$. However, note that $K_n$ is not necessarily a $k$-gon.
For convenience, we are still going to consider $K_n$ as a (possibly degenerated) $k$-gon with the following definitions.
Let $\ell_i$ and $\ell_i'$ be two parallel support lines of $K_n$ with equations $\ell_i \colon \langle x \, , \,v_i \rangle=\alpha_i$
and  $\ell_i' \colon \langle x \, , \,v_i \rangle=\alpha_i'$, where $\alpha_i>\alpha_i'$. Now, we denote
$K_n\cap \ell_i$ by $A_i=A_i(n)$ and we consider it as the $i$th vertex of $K_n$. Similarly, $K_n\cap \ell_i'$ is
denoted by $s_i=s_i(n)$ and we call it the $i$th side of $K_n$. Note that with these notation some vertices might
coincide and correspondingly some sides might degenerate into a point. We also introduce the $i$th height of 
$K_n$ as $m_i(n)=\alpha_i-\alpha_i'$. We put $\m_n=(m_1(n), m_2(n), \ldots, m_k(n))$, and $m_n=\max_i m_i(n)$.

The radius of the inscribed circle of $K$ is denoted by $\rho_k= \cos (\pi/k)$. We also introduce the notion of
change region here:
\[
\mathcal R_i(n)=K_n\cap \{ x\; | \; \langle x \, , \, v_i \rangle\geq \alpha_i'+\rho_k \}, \quad i=1,2,\ldots, k.
\]
Intuitively, the $i$th side moves, if we choose the next random point in $\mathcal R_i$.
(Note that, this is not entirely true, since a degenerated side can move in other ways.) Obviously, 
if $p_{n+1}\notin \bigcup_1^k \mathcal R_i(n)$, then $K_{n+1}=K_n$. 

We define $$K_\infty= \bigcap_{n=0}^\infty K_n,$$  the so called {\em limit object}.

\begin{lemma} \label{lemma:limit-obj}
The limit object $K_\infty$ is a possibly degenerated, closed $k$-gon whose sides are parallel to the sides of $K$. Furthermore,
the maximal height of $K_\infty$ is exactly $\rho_k$ almost surely.
\end{lemma}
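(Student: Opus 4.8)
The plan is to separate the statement into two parts: the structural claim that $K_\infty$ is a (possibly degenerate) $k$-gon with sides parallel to those of $K$, and the metric claim that its maximal height equals $\rho_k$ almost surely. The first part is essentially deterministic. Each $K_n$ is a finite intersection of translates of $K$, hence a convex polygon whose edges are normal to the directions $v_1,\dots,v_k$; therefore $K_n$ is contained in the intersection of the $k$ slabs $\{\alpha_i'(n)\le\langle x,v_i\rangle\le\alpha_i(n)\}$, and with the paper's convention of allowing degenerate vertices/sides we may regard it as a $k$-gon. The sequence $(K_n)$ is nested and each $K_n$ is compact and nonempty (it contains, e.g., the most recent point $p_{n}$), so $K_\infty=\bigcap_n K_n$ is a nonempty compact convex set; being an intersection of the polygons $K_n$, it is cut out by the limiting slab constraints $\alpha_i'(\infty):=\lim_n\alpha_i'(n)$ and $\alpha_i(\infty):=\lim_n\alpha_i(n)$ (these limits exist by monotonicity of the nested sequence), so $K_\infty$ is again a closed, possibly degenerate, $k$-gon with sides parallel to those of $K$. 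Its $i$th height is $m_i(\infty)=\alpha_i(\infty)-\alpha_i'(\infty)=\lim_n m_i(n)$, and $m_\infty:=\max_i m_i(\infty)=\lim_n m_n$ since the max of finitely many convergent sequences converges to the max of the limits.

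For the metric claim I would prove the two inequalities $m_\infty\ge\rho_k$ and $m_\infty\le\rho_k$ separately. The lower bound is deterministic: at every step $K_{n+1}=K_n\cap(p_{n+1}+K)$ with $p_{n+1}\in K_n$, so $K_{n+1}\supseteq\{p_{n+1}\}\cap\cdots$ — more usefully, since $p_{n+1}\in K_n$ and $K$ has inradius $\rho_k$, the translate $p_{n+1}+K$ contains a disc of radius $\rho_k$ about $p_{n+1}$, but $p_{n+1}$ need not be interior to $K_n$; the cleaner argument is that $K_\infty$ always contains a translate of the inscribed disc: indeed one shows by a limiting/compactness argument that $K_\infty$ has nonempty interior and in fact must contain a disc of radius $\rho_k$, because otherwise some height $m_i(\infty)<\rho_k$ would force, for large $n$, every point of $K_n$ to lie within distance $m_i(n)<\rho_k$ of the side $s_i$, which contradicts the fact that once $m_i(n)<\rho_k$ the $i$th change region $\mathcal R_i(n)$ is empty and the $i$th side can never move again while the opposite structure keeps the body from collapsing in that direction. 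So I would argue: if for some $i$ we had $m_i(n)<\rho_k$ at some finite stage, the constraint in direction $v_i$ is already inactive for all future intersections (no future point lands in $\mathcal R_i$ relative to that slab), pin down $\alpha_i'$, and conversely the heights cannot all shrink below $\rho_k$. This gives $m_\infty\ge\rho_k$.

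The upper bound $m_\infty\le\rho_k$ is the probabilistic heart of the lemma and the step I expect to be the main obstacle. The strategy is a Borel--Cantelli / "the process keeps cutting" argument: suppose on a set of positive probability $m_\infty>\rho_k$, say $m_{i}(\infty)=m_\infty=\rho_k+3\varepsilon$ for some direction $i$ and some $\varepsilon>0$ along a subsequence. Then for all large $n$, the change region $\mathcal R_i(n)$ has area bounded below by a positive constant depending only on $\varepsilon$ and $k$ (it contains a fixed-shape sub-simplex of height $\ge 2\varepsilon$), while $\mathrm{area}(K_n)$ is bounded above by $\mathrm{area}(K)$; hence the conditional probability that $p_{n+1}\in\mathcal R_i(n)$, and therefore that $m_i$ strictly decreases by a definite amount, is bounded below by a constant $\eta=\eta(\varepsilon,k)>0$. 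Since these events are "successful" infinitely often almost surely on that scenario (a conditional Borel--Cantelli argument with the filtration $\mathcal F_n$), $m_i(n)$ would be driven below $\rho_k+3\varepsilon$, a contradiction. Carrying this out requires a careful lower bound on the area of $\mathcal R_i(n)$ purely in terms of the height $m_i(n)$ (using that $K_n$, being an intersection of translates of $K$, contains a translate of a definite portion of $K$ near the moving side), and an invocation of the Lévy/extended Borel--Cantelli lemma for adapted events. Once $m_i(\infty)\le\rho_k$ is shown for every $i$, combined with the lower bound we get $m_i(\infty)=\rho_k$ for the maximal direction and $m_\infty=\rho_k$ a.s., completing the proof.
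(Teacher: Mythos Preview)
Your structural argument and your upper-bound argument are both correct and follow the same line as the paper. For the upper bound the paper uses a slightly slicker variant: rather than bounding $|\mathcal R_i(n)|$ from below in terms of $\varepsilon$, it observes that if $m_i(\infty)>\rho_k$ then the \emph{fixed} region $\mathcal R_i(\infty)\subset K_n$ has positive area, so each $p_{n+1}$ lands in it with probability at least $|\mathcal R_i(\infty)|/|K_0|>0$; but any point chosen in the interior of $\mathcal R_i(\infty)$ would push $\alpha_i'$ strictly past $\alpha_i'(\infty)$, a contradiction. No quantitative area estimate or conditional Borel--Cantelli is needed, so the step you flagged as ``the probabilistic heart'' is in fact the easy half.

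The genuine gap is in your lower bound. Your claim that $K_\infty$ ``must contain a disc of radius $\rho_k$'' is false: such a disc has width $2\rho_k$ in every direction $v_i$, which would force $m_i(\infty)\ge 2\rho_k$ for all $i$ and contradict the upper bound you just proved. (Concretely, for the pentagon the limit object has exactly one height equal to $\rho_5$ and the remaining four strictly smaller; see Theorem~\ref{thm:limit-pentagon}.) Your fallback observation, that once $m_i(n)<\rho_k$ the region $\mathcal R_i(n)$ is empty and the $i$th side freezes, is correct but does not by itself prevent \emph{all} heights from eventually dropping below $\rho_k$; the argument is never closed.

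The paper's lower bound is a one-line deterministic induction showing $m_n\ge\rho_k$ for every $n$. If $p_{n+1}\notin\bigcup_i\mathcal R_i(n)$ then every half-plane of $p_{n+1}+K$ is weaker than the corresponding one of $K_n$, so $K_{n+1}=K_n$. Otherwise $p_{n+1}\in\mathcal R_i(n)$ for some $i$; then the $i$th side of $K_{n+1}$ comes from $p_{n+1}+K$ and lies at distance exactly $\rho_k$ from $p_{n+1}$ in direction $-v_i$, while $p_{n+1}\in K_{n+1}$, giving $m_i(n+1)\ge\rho_k$ and hence $m_{n+1}\ge\rho_k$. The idea missing from your proposal is precisely this: whenever a side actually moves, it is placed exactly $\rho_k$ away from the newly chosen point, which itself remains in $K_{n+1}$.
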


\begin{proof}
Since $K_\infty$ is the intersection of closed half-planes with possible outer normals $-v_1, \ldots, -v_k$, it
follows, that $K_\infty$ is a closed, possibly degenerated $k$-gon with sides parallel to the sides of $K$.

First we show that  no height of $K_\infty$ is larger than $\rho_k$. Suppose that $m_1(\infty)>\rho_k$, in 
this case $\mathcal R_1(\infty)$ is of positive area. Observe that no point was selected from $\mathcal R_1(\infty)$
by definition, which is a contradiction.

Next we prove that the maximal height of $K_\infty$ is at least $\rho_k$. Clearly, it is enough to see 
that  $m_n\geq \rho_k$ for every $n$. This follows from the observation that  if $p_{n+1}\notin \bigcup_1^k \mathcal R_i(n)$,
then $K_{n+1}=K_n$. 
\end{proof}

In the following lemma we show that $K_n$ always contains a small circle of radius $1/10$. In particular this implies
that the area of $K_n$ (and thus the area of $K_\infty$ as well) is uniformly bounded from below by $\pi/100$. To ease the notation we
put $p_0 = 0$.

\begin{lemma}\label{termin}
Let $k\geq 5$, and assume that $$K_n=\bigcap_{j=0}^{n} (K+p_j),$$ where $p_j\in \bigcap_{m=0}^{j-1} (K+p_m)$ for all $j$.
Then $K_n$ contains a circle of radius $1/10$.
\end{lemma}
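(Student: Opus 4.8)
The plan is to exhibit an explicit disc of radius $1/10$ that must lie in $K_n$ regardless of how the points $p_0,\ldots,p_n$ were chosen, exploiting the fact that each $p_j$ lies in the previous intersection and that $K$ is a fixed regular $k$-gon with inradius $\rho_k = \cos(\pi/k) \geq \cos(\pi/5)$. The key observation is that if $q$ is any point of $K_n$, then $K_n \supseteq \bigcap_{j=0}^n (K + p_j)$ and every $p_j \in K_{j-1} \subseteq K_0 = K$, so all the translates $K+p_j$ are translates of $K$ by vectors lying in $K$ itself; hence the translation vectors $p_j$ all lie in the bounded set $K$ of circumradius $1$. I would first reduce to showing that there is a point $q$ whose distance to every side line of every translate $K + p_j$ is at least $1/10$; since the side lines of all these translates have only the $k$ outer normal directions $-v_1,\ldots,-v_k$, it suffices to control, for each direction $v_i$, the innermost among the $k$ half-planes $\{\langle x, v_i\rangle \le \langle p_j, v_i\rangle + \rho_k\}$ over $j=0,\ldots,n$, i.e. the one with $p_j$ minimizing $\langle p_j, v_i\rangle$.

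The main step is then a covering/centroid argument. A natural candidate for $q$ is $\bar p = \frac{1}{n+1}\sum_{j=0}^n p_j$, the centroid of the selected points, or alternatively the centroid of $K_n$ itself; I expect the cleanest route is to take $q$ to be the centroid of $K_n$ and use the standard fact that a convex body contains a homothetic copy of itself, scaled by $1/(d+1) = 1/3$ about its centroid — but this requires a lower bound on the inradius of $K_n$, which is circular. So instead I would argue directly: fix a direction $v_i$ and let $p_{j^\ast}$ achieve $\min_j \langle p_j, v_i\rangle$; because $p_{j^\ast} \in K$ and $K$ has circumradius $1$, we have $\langle p_{j^\ast}, v_i\rangle \ge -1$, and the corresponding support line of $K_n$ from that side satisfies $\langle x, v_i\rangle \le \langle p_{j^\ast}, v_i\rangle + \rho_k$ for all $x \in K_n$ while the opposite support line satisfies a complementary bound. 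Pinning down a single point $q$ that is simultaneously $\ge 1/10$ away from all $k$ of these innermost lines is the crux: I would show that the intersection of the $k$ half-planes $H_i = \{\langle x, v_i\rangle \le c_i\}$ with $c_i = \langle p_{j^\ast(i)}, v_i\rangle + \rho_k \ge -1 + \rho_k$ always contains a disc of radius $1/10$ centered at a suitable point, because a regular $k$-gon with inradius $\rho = -1+\cos(\pi/k)$... wait, that is negative, so one must instead use that the points $p_j$ cannot all be pushed to one side: since $p_0 = 0$ and each $p_j \in K$, the origin itself is in $K + p_0 = K$, and more importantly the relevant geometric fact is that $K_n$, being an intersection of translates of $K$ each containing some common structure, retains inradius bounded below.

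Concretely, the argument I would run: since $0 = p_0 \in K_j$ for every $j$ (as $p_0 \in \bigcap_{m<j}(K+p_m)$ forces $0$ into each later intersection only if... actually $0 \in K+p_m \iff -p_m \in K \iff p_m \in -K = K$ by central symmetry failing for odd $k$) — this needs care for odd $k$ where $K \ne -K$. So I would instead fix attention on the point $q_0 := \frac{2}{k}\sum_{j} p_j$ or simply observe: each constraint removes from $K$ a region within distance $\rho_k$... no. The robust plan: show $K_n \supseteq K \cap (K + p_{j_1}) \cap \cdots$ and that intersecting $K$ with one translate $K+p$, $p \in K$, leaves a body still containing a disc of radius $r_1$ around some point depending on $p$; then iterate. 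The single-intersection estimate is elementary planar geometry (two regular $k$-gons of inradius $\rho_k \ge \cos 36^\circ \approx 0.809$ with centers at distance $\le 1$ apart overlap in a region containing a disc of radius at least, say, $\rho_k - 1/2 > 0.3 > 1/10$ around the midpoint of the centers), but the iteration must not let the radius shrink — and it does not, because the constraint is that all translation vectors lie in the \emph{fixed} ball of radius $1$, so at every stage the feasible region is $\bigcap_{j}(K+p_j)$ with all $p_j$ in a ball of radius $1$; I would therefore prove the clean uniform statement that $\bigcap_{p \in S}(K + p) \supseteq$ a disc of radius $1/10$ whenever $S$ is any subset of the disc of radius $1$, by taking $q = \frac{1}{2}\,\mathrm{cen}(S)$-type point or just $q=0$ shifted appropriately and checking each of the $k$ linear inequalities numerically using $\rho_k \ge \cos(\pi/5)$ and $|p| \le 1$. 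The main obstacle is precisely this last verification: ensuring that the worst-case configuration of points $p_j$ spread around the circumscribed circle still leaves a $1/10$-disc, which for odd $k$ (lacking central symmetry) forces one to check that the $k$ innermost support lines cannot conspire to cut down to nothing — handled by the inradius-versus-circumradius inequality $\cos(\pi/k) > \sin(\pi/k)/(\ldots)$, and for $k=5$ the slack $\cos 36^\circ - \text{(half the circumradius)} = 0.809 - 0.5 = 0.309 > 1/10$ is comfortable, with larger $k$ only better.
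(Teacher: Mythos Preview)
Your proposal has a genuine gap at the crucial step. You correctly observe that each $p_j \in K_{j-1} \subset K_0 = K$, and hence all translation vectors lie in the unit disc $B$. But you then attempt to prove the uniform statement that $\bigcap_{p\in S}(K+p)$ contains a disc of radius $1/10$ for \emph{every} subset $S$ of the unit disc. This is false already for $k=5$: take $S=\{v,w\}$ where $v,w$ are two vertices of $K$ at distance $2\sin(2\pi/5)\approx 1.902$; the projections of $K+v$ and $K+w$ onto the line through $v,w$ each have length $2\sin(2\pi/5)$ and are shifted by $\pm\sin(2\pi/5)$, so $(K+v)\cap(K+w)$ is a single point. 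Your final ``slack'' calculation $\cos 36^\circ - 1/2 > 1/10$ only controls one direction at a time and cannot produce a common center. The constraint $p_j\in K$ alone is too weak; you have thrown away the essential information that the $p_j$ are \emph{nested}, i.e.\ $p_j\in\bigcap_{m<j}(K+p_m)$, which forces the $p_j$ to be pairwise close.

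The paper's argument uses this nesting as follows. Passing to the circumscribed unit disc $B$ and setting $B_n=\bigcap_{j\le n}(B+p_j)\supset K_n$, one shows that \emph{every} $p_j$ lies in $B_n$: if $p_j\notin B+p_m$ for some $m>j$ then by the symmetry of $B$ also $p_m\notin B+p_j$, contradicting $p_m\in K_{m-1}\subset B+p_j$. Thus $B_n$ is an intersection of unit discs whose centers all lie in $B_n$ itself, so $B_n$ has minimal width at least $1$, and Blaschke's theorem yields a disc $x+\tfrac13 B\subset B_n$. Then $|x-p_j|\le 2/3$ for every $j$, and the inequality $\rho_k\ge\cos(\pi/5)>2/3+1/10$ gives $x+\tfrac1{10}B\subset\rho_k B+p_j\subset K+p_j$ for every $j$. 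The ingredient you are missing is precisely this mutual-containment step (all $p_j\in B_n$), which turns the nesting hypothesis into the diameter bound that your approach needs but cannot get from $p_j\in K$ alone.
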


\begin{proof}
Denote $B$ the unit circle centered at the origin, which is the circumcircle of $K$ by definition. 
Also by definition $\rho_k B$ is the incircle of $K$. We consider $$B_n=\bigcap_{j=0}^{n} (B+p_j),$$ and we observe
that $K_n\subset B_n$ holds for all $n$. 

We claim that for all $j=0,1,\ldots, n$, we have $p_j\in B_n$. By definition $p_j \in K_j\subset B_j$.
Suppose that $p_j \notin B_n$, then there exist an index $n_0$ with $j<n_0\leq n$ such that $p_j \notin (B+p_{n_0})$,
and thus $p_{n_0} \notin (B+p_j)$. But by definition $p_{n_0}\in B_{n_0}\subset (B+p_j)$, a contradiction.

We obtained that $B_n$ is the intersection of the unit circles $B+p_j$ such that all centers $p_j$ are contained
in $B_n$. This readily implies that the minimal width of $B_n$ is at least one. Then Blaschke's Theorem (see \cite{jb}, p. 18, Th. 2-5.)
implies that there exists $x$ such that $B/3+x\subseteq B_n$. Obviously for all $j\leq n$ we have that $x \in 2B/3+p_j$,
and thus $\rho_k\geq \rho_5=\cos \pi/5 \approx 0.809>2/3+1/10$ implies that for all $j\leq n$ we have $B/10+x\subset K+p_j$,
which proves the statement. 
\end{proof}


\begin{lemma}
There exists a $\delta_k>0$ such that if every height of $K_n$ is smaller than $\rho_k+\delta_k$, then the 
change regions $\mathcal R_i$ are pairwise disjoint.
\end{lemma}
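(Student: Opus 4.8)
The plan is to exhibit an explicit $\delta_k$ and to show that if two change regions of some $K_n$ met, then one of the heights of $K_n$ would be forced to exceed $\rho_k+\delta_k$. First I would record two facts: since $p_0=0$, every $K_n\subseteq K_0=K$; and by Lemma~\ref{termin}, $K_n$ contains a disk of radius $1/10$, so $K_n$ is a genuine two–dimensional polygon and each height satisfies $1/5\le m_i(n)<\rho_k+\delta_k<1$. Also, $\mathcal R_i(n)=K_n\cap\{x:\langle x,v_i\rangle\ge\alpha_i-(m_i(n)-\rho_k)\}$, so $\mathcal R_i(n)=\emptyset$ unless $m_i(n)\ge\rho_k$; and for such $i$ every $x\in\mathcal R_i(n)$ satisfies $\langle x,v_i\rangle>\alpha_i-\delta_k$. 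Thus only indices with $m_i(n)\ge\rho_k$ matter, and we may assume throughout that the two indices under consideration are of this type.

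The heart of the argument is the following identity. Let $v_i,v_{i+1}$ be consecutive vertex directions and set $m:=i+\tfrac{k+1}{2}\pmod k$, so that $v_m=-(v_i+v_{i+1})/(2\rho_k)$, i.e. $-v_m$ is the bisector of the arc from $v_i$ to $v_{i+1}$ (equivalently, $s_m$ is the side of $K$ opposite the edge $v_iv_{i+1}$). From $v_m=-(v_i+v_{i+1})/(2\rho_k)$ one gets at once
\[
m_m(n)=\frac{1}{2\rho_k}\Big(\max_{K_n}\langle\cdot,v_i+v_{i+1}\rangle-\min_{K_n}\langle\cdot,v_i+v_{i+1}\rangle\Big).
\]
The key point is that $\min_{K_n}\langle\cdot,v_i+v_{i+1}\rangle=\alpha_i'+\alpha_{i+1}'$: indeed the vertex $A_m(n)$, extreme in direction $v_m$, is also extreme in the directions $-v_i$ and $-v_{i+1}$, since its normal cone is a wedge of angular width at least $2\pi/k$ containing $v_m$, while $-v_i,-v_{i+1}$ lie at angular distance $\pi/k$ on either side of $v_m$, so the wedge reaches each of them (it extends at least to the adjacent side–normal, and further if that side has degenerated). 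Hence $\langle A_m(n),v_i\rangle=\alpha_i'$ and $\langle A_m(n),v_{i+1}\rangle=\alpha_{i+1}'$, which gives the claimed equality.

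Now suppose $\mathcal R_i(n)\cap\mathcal R_{i+1}(n)\neq\emptyset$ for two consecutive indices, and pick $x^*$ in the intersection; then $m_i(n),m_{i+1}(n)\ge\rho_k$ and $\langle x^*,v_i+v_{i+1}\rangle>(\alpha_i-\delta_k)+(\alpha_{i+1}-\delta_k)$, whence $\max_{K_n}\langle\cdot,v_i+v_{i+1}\rangle>\alpha_i+\alpha_{i+1}-2\delta_k$. Combining this with the identity above,
\[
m_m(n)>\frac{(\alpha_i-\alpha_i')+(\alpha_{i+1}-\alpha_{i+1}')-2\delta_k}{2\rho_k}=\frac{m_i(n)+m_{i+1}(n)-2\delta_k}{2\rho_k}\ge 1-\frac{\delta_k}{\rho_k}.
\]
So as soon as $\delta_k$ is chosen with $1-\delta_k/\rho_k\ge\rho_k+\delta_k$ — for instance $\delta_k=\dfrac{\rho_k(1-\rho_k)}{2(1+\rho_k)}$ — this contradicts $m_m(n)<\rho_k+\delta_k$. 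Hence no two consecutive change regions meet once every height of $K_n$ is below $\rho_k+\delta_k$.

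It remains to handle $\mathcal R_i(n)\cap\mathcal R_j(n)\neq\emptyset$ for non-consecutive $i,j$. For $k=5$ this is impossible for trivial angular reasons: an overlap would force $A_i,\dots,A_j$ to coincide, and the coincidence of three consecutive of them would require interior angle at most $\pi-\tfrac{6\pi}{5}<0$. For general odd $k$, an overlap of $\mathcal R_i$ and $\mathcal R_j$ likewise forces the sides $s_{i+(k+1)/2},\dots,s_{j-1+(k+1)/2}$ to be almost degenerate; running the estimate of the previous paragraph along each consecutive pair of this chain, together with $m_t(n)\ge 1/5$ from the inscribed disk and $K_n\subseteq K$, again pushes one height past $\rho_k+\delta_k$. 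I expect the only real obstacle to be exactly this normal-cone bookkeeping — tracking, in the presence of degenerate sides, which vertex of $K_n$ is extreme in which direction, and reducing the non-consecutive case to the consecutive one; the quantitative core is the elementary computation above.
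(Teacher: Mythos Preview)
Your treatment of the consecutive case is correct and is essentially the paper's argument recast in coordinates: both show that a point in $\mathcal R_i\cap\mathcal R_{i+1}$ forces the opposite height $m_{i+(k+1)/2}$ to exceed $\rho_k+\delta_k$. Your normal-cone observation that $A_m(n)$ simultaneously realises $\alpha_i'$ and $\alpha_{i+1}'$ is precisely what underlies the paper's picture with the auxiliary point $M$, and your explicit choice of $\delta_k$ is a welcome extra.

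The non-consecutive case, however, is a genuine gap. Your sketch never produces a point in any $\mathcal R_l\cap\mathcal R_{l+1}$, so ``running the estimate of the previous paragraph along each consecutive pair'' has nothing to run on. The $k=5$ remark is also not a proof: membership in $\mathcal R_i$ only pins $x^*$ to within $\delta_k$ of the supporting line $\ell_i$, not near the vertex $A_i$, so the assertion that $A_i,\dots,A_j$ ``coincide'' does not follow from what you have written. What the paper actually does here is exactly the reduction you anticipate but do not carry out: it shows, by induction on $m$, that if $X\in\mathcal R_1\cap\mathcal R_m$ (with $2\le m\le(k-1)/2$) then $X\in\bigcap_{j=1}^m\mathcal R_j$, hence in particular $X\in\mathcal R_1\cap\mathcal R_2$, reducing to the adjacent case. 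The inductive step compares $K_n$ with the translate $K+X$: assuming $X\notin\mathcal R_l$ for intermediate $l$, the chain of sides $s_2,\dots,s_{m-1}$ of $K_n$ (each parallel to and no longer than the corresponding side of $K$) would have to span a strictly greater horizontal extent than the same chain in $K+X$, which is impossible. This geometric comparison is short but has real content, and nothing in your proposal substitutes for it.
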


\begin{proof}
We show that for every $i\neq j$ $\mathcal R_i$ and $\mathcal R_j$ are disjoint. 

First we show that the statement is true for adjacent regions. Suppose that 
$X\in \mathcal R_1 \cap \mathcal R_2$ (see Figure  \ref{fig:consecutivedr}). 

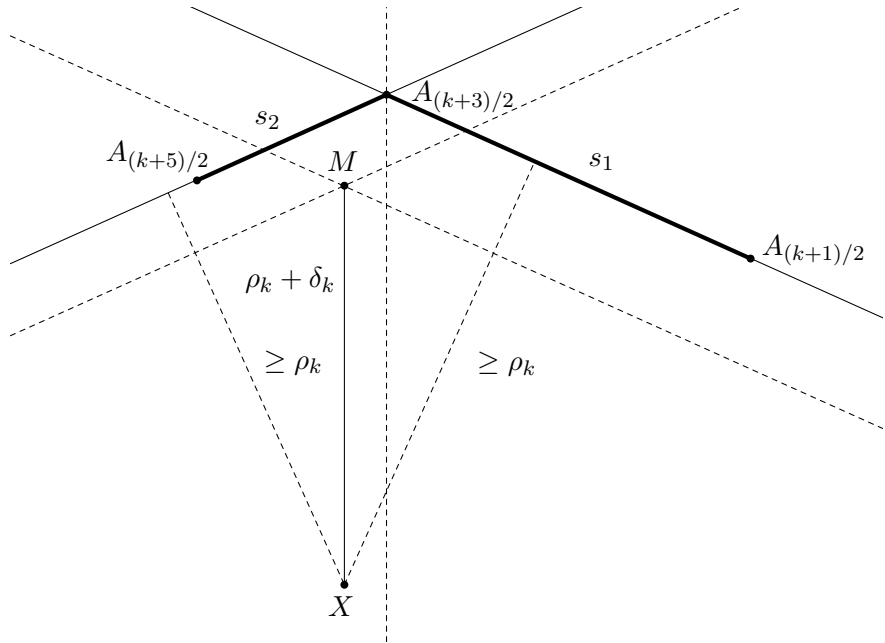
\begin{figure}
\centering
\begin{tikzpicture}[line cap=round,line join=round,>=triangle 45,x=1.0cm,y=1.0cm]
\clip(5.38,-3.74) rectangle (16.98,4.7);
\draw [dash pattern=on 2pt off 2pt] (10.38,-3.74) -- (10.38,4.7);
\draw [domain=5.38:16.98] plot(\x,{(-3.81--1.52*\x)/3.38});
\draw [domain=5.38:16.98] plot(\x,{(--27.74-1.52*\x)/3.38});
\draw [line width=1.6pt] (7.86,2.4)-- (10.38,3.54);
\draw [line width=1.6pt] (10.38,3.54)-- (15.22,1.36);
\draw [dash pattern=on 2pt off 2pt] (7.48,2.23)-- (9.82,-2.98);
\draw [dash pattern=on 2pt off 2pt] (9.82,-2.98)-- (12.35,2.65);
\draw [dash pattern=on 2pt off 2pt,domain=5.38:16.98] plot(\x,{(-7.05--1.52*\x)/3.38});
\draw [dash pattern=on 2pt off 2pt,domain=5.38:16.98] plot(\x,{(--32.65-2.18*\x)/4.84});
\draw (9.82,2.33)-- (9.82,-2.98);
\fill [color=black] (10.38,3.54) circle (1.5pt);
\draw[color=black] (11.4,3.51) node {$A_{(k+3)/2}$};
\fill [color=black] (7.86,2.4) circle (1.5pt);
\draw[color=black] (7.34,2.71) node {$A_{(k+5)/2}$};
\fill [color=black] (15.22,1.36) circle (1.5pt);
\draw[color=black] (16.06,1.47) node {$A_{(k+1)/2}$};
\draw[color=black] (8.8,3.2) node {$s_2$};
\draw[color=black] (13.24,2.65) node {$s_1$};
\fill [color=black] (9.82,-2.98) circle (1.5pt);
\draw[color=black] (9.78,-3.26) node {$X$};
\draw[color=black] (9.14,-0.02) node {$\geq \rho_k$};
\draw[color=black] (11.98,-0.04) node {$\geq \rho_k$};
\fill [color=black] (9.82,2.33) circle (1.5pt);
\draw[color=black] (9.8,2.66) node {$M$};
\draw[color=black] (9.1,1.08) node {$\rho_k+\delta_{k}$};
\end{tikzpicture}
\caption{ \label{fig:consecutivedr}
 Adjacent change regions are disjoint}
\end{figure}

According to Figure  \ref{fig:consecutivedr} we draw two lines parallel to $\ell'_1$ and $\ell'_2$
respectively that are at distance exactly $\rho_k$ from the point $X$, these two lines meet in the point $M$. 
Obviously, there exists a $\delta_{k}>0$ (depending only on $k$), such that $\overline{XM}=\rho_k+\delta_{k}$. 
Readily follows that $m_{(k+3)/2}\geq \rho_k + \delta_{k}$, a contradiction.

Next we prove that if $2\leq m\leq (k-1)/2$, and $X\in \mathcal R_1\cap\mathcal R_m$, then 
$X\in \bigcap_1^m \mathcal R_j$. This obviously implies the statement of the lemma. We proceed by induction
on $m$. For $m=2$ we are done. Now we assume that the statement is true till $m-1$, and we prove it for $m$.

Pick $X\in \mathcal R_1 \cap \mathcal R_{m}$. We may assume that $X \notin \mathcal R_j$ for any $j=2, 3, \ldots, m-1$,
otherwise we would be done by applying the hypothesis twice. We may also assume that we changed the coordinate
system such that the slope of $\ell'_m$ is positive, the slope of $\ell'_1$ is negative, and the bisectors of the
line $\ell'_1$ and $\ell'_m$ are vertical and horizontal, see Figure \ref{fig:nonconsecutivedr}.

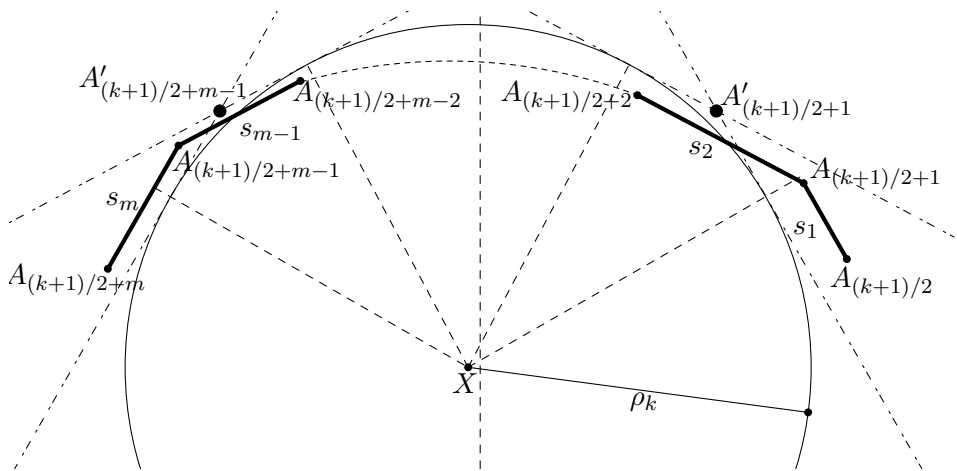
\begin{figure}
\centering

\begin{tikzpicture}[line cap=round,line join=round,>=triangle 45,x=.5cm,y=.5cm]
\clip(-2.42,-6.84) rectangle (22.7,5.32);
\draw [dash pattern=on 3pt off 3pt] (10.1,-6.84) -- (10.1,5.32);
\draw [line width=1.6pt] (0.2,-1.52)-- (2.08,1.76);
\draw [line width=1.6pt] (2.08,1.76)-- (5.32,3.48);
\draw [line width=1.6pt] (18.7,0.76)-- (19.85,-1.26);
\draw [line width=1.6pt] (14.28,3.1)-- (18.7,0.76);
\draw [shift={(9.28,-10.24)},dash pattern=on 2pt off 2pt]  plot[domain=1.21:1.85,variable=\t]({1*14.25*cos(\t r)+0*14.25*sin(\t r)},{0*14.25*cos(\t r)+1*14.25*sin(\t r)});
\draw(9.78,-4.14) circle (4.56cm);
\draw [dash pattern=on 3pt off 3pt] (1.44,0.64)-- (9.78,-4.14);
\draw [dash pattern=on 3pt off 3pt] (9.78,-4.14)-- (5.5,3.91);
\draw [dash pattern=on 3pt off 3pt] (9.78,-4.14)-- (14.06,3.91);
\draw [dash pattern=on 3pt off 3pt] (9.78,-4.14)-- (18.6,0.92);
\draw [dash pattern=on 1pt off 2pt on 3pt off 4pt,domain=-2.42:22.7] plot(\x,{(-5.39--3.28*\x)/1.88});
\draw [dash pattern=on 1pt off 2pt on 3pt off 4pt,domain=-2.42:22.7] plot(\x,{(--3.22--1.72*\x)/3.24});
\draw [dash pattern=on 1pt off 2pt on 3pt off 4pt,domain=-2.42:22.7] plot(\x,{(-36.86--1.72*\x)/-3.24});
\draw [dash pattern=on 1pt off 2pt on 3pt off 4pt,domain=-2.42:22.7] plot(\x,{(--36.04-2.01*\x)/1.15});
\draw (9.78,-4.14)-- (18.82,-5.34);
\fill [color=black] (0.2,-1.52) circle (1.5pt);
\draw[color=black] (-0.65,-1.8) node {$A_{(k+1)/2+m}$};
\fill [color=black] (2.08,1.76) circle (1.5pt);
\draw[color=black] (4.15,1.2) node {$A_{(k+1)/2+m-1}$};
\fill [color=black] (5.32,3.48) circle (1.5pt);
\draw[color=black] (7.36,3.06) node {$A_{(k+1)/2+m-2}$};
\draw[color=black] (0.6,0.28) node {$s_{m}$};
\draw[color=black] (4.54,2.1) node {$s_{m-1}$};
\fill [color=black] (14.28,3.1) circle (1.5pt);
\draw[color=black] (12.4,3.06) node {$A_{(k+1)/2+2}$};
\fill [color=black] (18.7,0.76) circle (1.5pt);
\draw[color=black] (20.62,1) node {$A_{(k+1)/2+1}$};
\fill [color=black] (19.85,-1.26) circle (1.5pt);
\draw[color=black] (20.78,-1.88) node {$A_{(k+1)/2}$};
\draw[color=black] (18.76,-0.48) node {$s_1$};
\draw[color=black] (15.98,1.64) node {$s_2$};
\fill [color=black] (9.78,-4.14) circle (1.5pt);
\draw[color=black] (9.7,-4.55) node {$X$};
\fill [color=black] (18.82,-5.34) circle (1.5pt);
\draw[color=black] (14.46,-5.06) node {$\rho_k$};
\fill [color=black] (3.18,2.68) circle (2.5pt);
\draw[color=black] (1.68,3.4) node {$A'_{(k+1)/2+m-1}$};
\fill [color=black] (16.38,2.68) circle (2.5pt);
\draw[color=black] (18.24,2.94) node {$A'_{(k+1)/2+1}$};
\end{tikzpicture}
\caption{\label{fig:nonconsecutivedr}
 Non-adjacent change regions are disjoint}
 \end{figure}

Draw the translated copy $K_X$ of $K$ whose center is $X$, the incircle of $K_X$ is of radius $\rho_k$ and of
center $X$. Consider the vertices $A_{(k+1)/2+1}$ and $A_{(k+1)/2+m-1}$ of $K_n$, and the vertices $A'_{(k+1)/2+1}$ 
and $A'_{(k+1)/2+m-1}$ of $K_X$. From the assumptions it clearly follows that the `horizontal distance'
(the difference of the $x$ coordinates) of $A_{(k+1)/2+1}$ and $A_{(k+1)/2+m-1}$ is larger than the horizontal distance
of $A'_{(k+1)/2+1}$ and $A'_{(k+1)/2+m-1}$. But this is a contradiction, since the sides $s_1, s_2, \ldots, s_{m-1}$ form
a fixed angle with the $x$-axis, and each of them is at most as long as the side length of $K$, and thus the horizontal
distance of  $A'_{(k+1)/2+1}$ and $A'_{(k+1)/2+m-1}$ is maximal.
\end{proof}

A configuration is called \textit{reduced} if the change regions
are disjoint. In a reduced state it is possible to follow the process. That gives
the importance of the following simple corollary which readily follows from the fact that $\m_n$ is 
componentwise monotone decreasing and $m_n \downarrow \rho_k$.

\begin{cor} \label{cor:reduced}
The process a.s.~reaches  a
reduced state in a random number of steps. After reaching a reduced state, the process always stays in a reduced state.
\end{cor}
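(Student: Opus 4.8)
The plan is to combine the three structural facts established just above: componentwise monotonicity of $\m_n$, the almost sure convergence $m_n\downarrow\rho_k$, and the existence of the threshold $\delta_k>0$ such that a configuration is reduced whenever every height is below $\rho_k+\delta_k$. First I would note that since each coordinate $m_i(n)$ is nonincreasing in $n$ and bounded below by $0$, each $m_i(n)$ converges almost surely to some limit $m_i(\infty)$; by Lemma~\ref{lemma:limit-obj} we in fact know $\max_i m_i(\infty)=\rho_k$ a.s., hence $m_n=\max_i m_i(n)\downarrow\rho_k$ almost surely. Fix an $\omega$ in the (probability one) event on which this convergence holds. Then there exists a (random) index $N=N(\omega)$ such that $m_N<\rho_k+\delta_k$; by the previous lemma the configuration $K_N$ is reduced, which proves the first assertion with the random number of steps being $N$.

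For the second assertion I would invoke monotonicity once more: for every $n\geq N$ we have $m_n\leq m_N<\rho_k+\delta_k$ (the maximal height never increases, since $K_{n+1}\subseteq K_n$ forces $m_i(n+1)\leq m_i(n)$ for each $i$). Applying the threshold lemma to $K_n$ for each such $n$ shows the change regions of $K_n$ are pairwise disjoint, i.e. $K_n$ is reduced. Thus once the process is reduced it remains reduced for all subsequent steps, which is exactly the claim.

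There is essentially no obstacle here: the corollary is a clean bookkeeping consequence of the monotonicity of the heights and the two lemmas cited, and the only mild care needed is in stating that $N$ is a genuine (finite, a.s.) random variable, which follows from $m_n\downarrow\rho_k<\rho_k+\delta_k$. The one point worth spelling out explicitly is why $m_i(n+1)\leq m_i(n)$ for each individual $i$ — this is because $K_{n+1}=K_n\cap(p_{n+1}+K)\subseteq K_n$, so the support function gap $\alpha_i-\alpha_i'$ defining $m_i(n)$ can only shrink — but this is already implicit in the paragraph preceding the corollary (``$\m_n$ is componentwise monotone decreasing''), so in the write-up it can simply be cited rather than reproved.
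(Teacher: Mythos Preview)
Your proposal is correct and follows exactly the paper's approach: the paper states that the corollary ``readily follows from the fact that $\m_n$ is componentwise monotone decreasing and $m_n \downarrow \rho_k$,'' and you have simply spelled out these two ingredients together with the threshold lemma in full detail. There is nothing to add or correct.
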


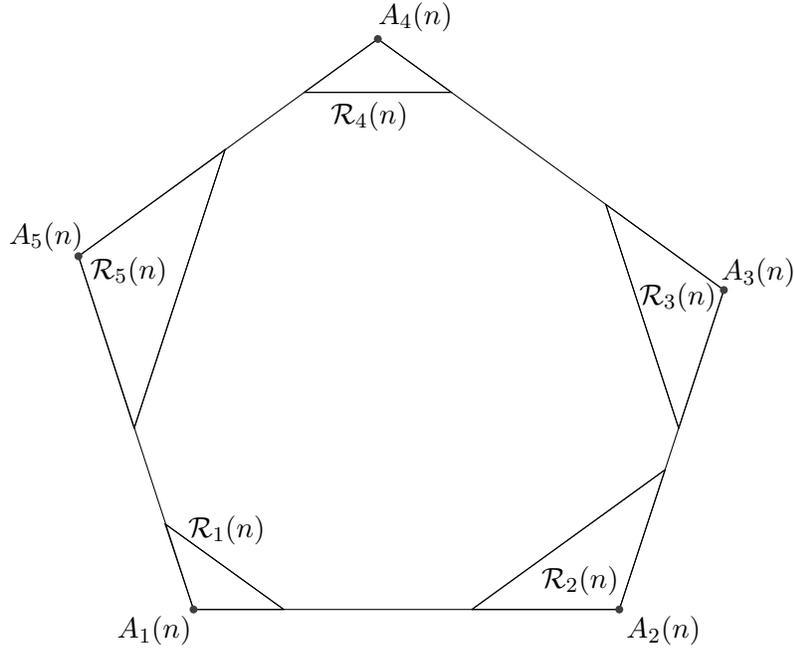
\begin{figure}
\definecolor{uququq}{rgb}{0.25,0.25,0.25}
\begin{tikzpicture}[line cap=round,line join=round,>=triangle 45,x=1.0cm,y=1.0cm]
\clip(-3.5,5) rectangle (8.89,14.18);
\draw (-0.14,5.84)-- (5.52,5.84);
\draw (5.52,5.84)-- (6.91,10.09);
\draw (6.91,10.09)-- (2.31,13.43);
\draw (2.31,13.43)-- (-1.67,10.54);
\draw (-1.67,10.54)-- (-0.14,5.84);
\draw (1.33,12.72)-- (3.29,12.72);
\draw (0.28,11.96)-- (-0.93,8.25);
\draw (5.34,11.23)-- (6.31,8.25);
\draw (6.13,7.7)-- (3.56,5.84);
\draw (1.06,5.84)-- (-0.52,6.98);
\draw (3.56,5.84)-- (5.52,5.84);
\draw (5.52,5.84)-- (6.13,7.7);
\draw (6.13,7.7)-- (3.56,5.84);
\draw (6.31,8.25)-- (6.91,10.09);
\draw (6.91,10.09)-- (5.34,11.23);
\draw (5.34,11.23)-- (6.31,8.25);
\draw (3.29,12.72)-- (2.31,13.43);
\draw (2.31,13.43)-- (1.33,12.72);
\draw (1.33,12.72)-- (3.29,12.72);
\draw (0.28,11.96)-- (-1.67,10.54);
\draw (-1.67,10.54)-- (-0.93,8.25);
\draw (-0.93,8.25)-- (0.28,11.96);
\draw (-0.14,5.84)-- (1.06,5.84);
\draw (1.06,5.84)-- (-0.52,6.98);
\draw (-0.52,6.98)-- (-0.14,5.84);
\draw (-0.66,5.56) node {$A_1(n)$};
\draw (0.3,6.9) node {$\mathcal{R}_1(n)$};
\draw (6.11,5.56) node {$A_2(n)$};
\draw (5.0,6.23) node {$\mathcal{R}_2(n)$};
\draw (7.37,10.32) node {$A_3(n)$};
\draw (6.3,10.0) node {$\mathcal{R}_3(n)$};
\draw (2.81,13.73) node {$A_4(n)$};
\draw (2.2,12.4) node {$\mathcal{R}_4(n)$};
\draw (-2.1,10.8) node {$A_5(n)$};
\draw (-1.0,10.34) node {$\mathcal{R}_5(n)$};
\fill [color=uququq] (-0.14,5.84) circle (1.5pt);
\fill [color=uququq] (-1.67,10.54) circle (1.5pt);
\fill [color=uququq] (5.52,5.84) circle (1.5pt);
\fill [color=uququq] (6.91,10.09) circle (1.5pt);
\fill [color=uququq] (2.31,13.43) circle (1.5pt);
\end{tikzpicture}
\caption{\label{fig:change}
 Change regions in a reduced state}
\end{figure}

\section{The pentagon}\label{sect:pentagon}

In this section we consider the pentagon process. This is the simplest case when
not only the position, but also the shape of the limit object is random.
We show that exactly one height
of the limit object is $\rho_5$, which allows us to determine the speed
of the process.

\subsection{On the limit pentagon}

First we prove that the process cannot degenerate in the following sense.

\begin{lemma}
$K_n$ is always a pentagon with equal inner angles.
\end{lemma}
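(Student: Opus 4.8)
The plan is to show that $K_n$ always has exactly five vertices, none of which degenerate, so that it is a genuine pentagon, and then to verify that its inner angles coincide with those of the regular pentagon $K$. The angle claim is immediate from the construction: since $K_n = \bigcap_{j=0}^n (K + p_j)$ is an intersection of translates of the regular pentagon $K$, each side of $K_n$ lies on a line of the form $\langle x, v_i\rangle = \alpha_i'$ with $v_i$ one of the five outer-normal directions of $K$. Consecutive sides of a convex polygon correspond to consecutive normal directions, and so whenever two adjacent sides $s_i, s_{i+1}$ are both non-degenerate, the angle between them equals the angle between the corresponding sides of $K$, i.e.\ the inner angle of the regular pentagon. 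Thus the only thing that can go wrong is that some side degenerates to a point, i.e.\ $K_n$ has fewer than five vertices; ruling this out is the real content.

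So the main step is to prove that no side of $K_n$ ever degenerates, equivalently that every height $m_i(n)$ stays strictly positive. I would argue by contradiction (or by induction on $n$): suppose $m_i(n) = 0$ for some $i$, so the $i$th side has shrunk to a point and $K_n$ is contained in the half-plane $\langle x, v_i\rangle \le \alpha_i'$ with equality only at that single vertex. The key geometric fact to exploit is Lemma~\ref{termin}: $K_n$ contains a disc of radius $1/10$. A disc of radius $1/10$ sitting inside $K_n$ forces every height of $K_n$ to be at least $1/5$ (the width of $K_n$ in the direction $v_i$ is at least the diameter of any inscribed disc), which contradicts $m_i(n) = 0$. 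In fact this shows more: $m_i(n) \ge 1/5 > 0$ for every $i$ and every $n$, so not only is $K_n$ a pentagon, but it is uniformly non-degenerate.

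With the heights bounded below, I then return to the angle statement with no degeneracy caveat: all five sides $s_1, \dots, s_5$ of $K_n$ are non-degenerate segments, each parallel to the corresponding side of $K$, and they occur in the same cyclic order as in $K$ (because the outer normals $-v_1, \dots, -v_5$ are encountered in this cyclic order as one traverses the boundary of any convex body having exactly these normal directions active). Hence at each vertex $A_i(n)$ the two incident sides $s_{i-1}, s_i$ make the same angle as the corresponding sides of the regular pentagon, namely the inner angle $3\pi/5$. Therefore $K_n$ is a (non-degenerate) pentagon with all inner angles equal to $3\pi/5$, which is exactly the assertion.

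The step I expect to be the main obstacle is the clean bookkeeping of which normal directions are ``active'' and in what order, i.e.\ making rigorous the claim that an intersection of translates of $K$ really does have all five sides present once we know it contains a fixed-size disc. This is intuitively obvious but needs a short argument: if the $i$th side were absent, $K_n$ would lie in $\{\langle x, v_i\rangle < \alpha_i'\}$ with the supremum attained, say, at a vertex where sides with normals $v_{i-1}$ and $v_{i+1}$ meet; but then the inscribed disc of radius $1/10$ would have to fit into the resulting ``corner'', and since the angle there is the pentagon's inner angle $3\pi/5$ (obtuse but fixed), one gets a quantitative contradiction with the height bound. Once this is set up the rest is routine planar geometry.
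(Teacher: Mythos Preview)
Your argument rests on a false equivalence: ``no side of $K_n$ ever degenerates'' is \emph{not} the same as ``every height $m_i(n)$ stays strictly positive''. In the paper's notation $m_i(n)=\alpha_i-\alpha_i'$ is the \emph{width} of $K_n$ in the direction $v_i$, and this is automatically $\ge 1/5$ for every $i$ once Lemma~\ref{termin} gives an inscribed disc of radius $1/10$. But the side $s_i$ degenerating means that $K_n\cap\ell_i'$ is a single point, i.e.\ the two sides adjacent to $s_i$ meet before $s_i$ has positive length; this says nothing about $m_i(n)$ vanishing. A quadrilateral with angles $\pi/5,3\pi/5,3\pi/5,3\pi/5$ (which is what you get when one pentagon side collapses) has all five directional widths strictly positive, so the bound $m_i(n)\ge 1/5$ is simply irrelevant to the degeneration question.

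Your fallback ``corner'' argument does not rescue this. First, the angle at the degenerate vertex is $\pi/5$, not $3\pi/5$: the two surviving sides there have outer normals $-v_{i-1}$ and $-v_{i+1}$, which are $4\pi/5$ apart, giving interior angle $\pi-4\pi/5=\pi/5$. Second, and more importantly, the inscribed disc of radius $1/10$ is numerically far too weak. The maximal quadrilateral of this shape (the deltoid used in the paper, with the two long sides equal to the side length $s=2\sin(\pi/5)$ of $K$) has inradius about $0.31$, comfortably larger than $1/10$; so there is no contradiction to be extracted from Lemma~\ref{termin} alone. The paper's proof needs a genuinely different idea: it looks at the \emph{first} step $n$ at which a side collapses, argues that one of the sides of the resulting quadrilateral must lie on $\partial(p_n+K)$ while the opposite vertex survives from $K_{n-1}$, and hence the corresponding height is at least $\rho_5=\cos(\pi/5)\approx 0.809$. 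This is then played off against the deltoid bound $m_4\le 2\sin^2(\pi/5)\approx 0.69$ to get the contradiction. That comparison between a height forced large by the fresh cut and a height forced small by the collapsed geometry is the missing ingredient in your approach.
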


\begin{proof}
The key observation is that the directions of the sides of $K_n$ are prescribed,
thus the only thing we have to show that a side cannot disappear.
Suppose the opposite, and seek a contradiction. Let $K_n$ be the first non-pentagonal
state, and first assume that it is a quadrilateral and the side $A_1 A_5$ disappears.
It easy to calculate the inner
angles of $K_n$, three of them equals the inner angle of a
regular pentagon, $3\pi/5$ (at vertices $A_2$, $A_3$ and $A_4$), while the fourth
one is $\pi/5$ (at the vertex $A_1$). Also note, that the side lengths of $K_n$
cannot exceed the side length of $K$. Thus $K_n$ is contained in a deltoid, see
Figure~\ref{deltoid}, where $s$ is the side length of $K$. This implies that the
heights $m_2$ and $m_4$ of $K_n$ are at most
$s\cdot \sin (\pi/5)=2\cdot \sin^2(\pi/5)\approx 0.69$. A simple argument
shows that we may assume that $A_4$ was a vertex of $K_{n-1}$, but $A_1$
and $A_2$ were not. 
This implies that the side $A_1 A_2$ comes from $K$ (more precisely,
$A_1 A_2 \subset p_n + \partial K$), and so $m_4 \geq \rho_5$.
But this is not possible, since $m_4<\rho_5$, a contradiction.
Similar argument settles the case when $K_n$ is a triangle.
\end{proof}

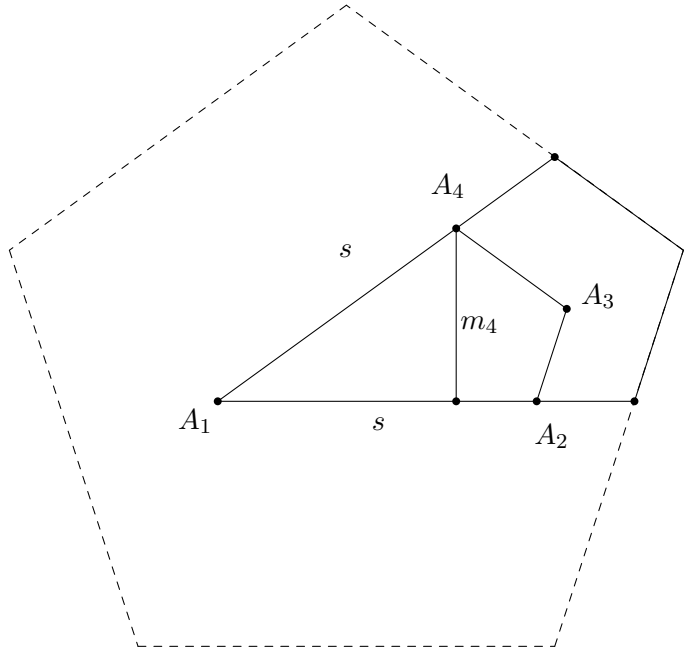
\begin{figure}[ht]
\centering
\begin{tikzpicture}[line cap=round,line join=round, >=triangle 45, x=1.0cm,y=1.0cm]
\clip(3.8,-5) rectangle (16.04,3.98);
\draw [dash pattern=on 3pt off 3pt] (9.61,3.69)-- (5.13,0.43);
\draw [dash pattern=on 3pt off 3pt] (5.13,0.43)-- (6.84,-4.84);
\draw [dash pattern=on 3pt off 3pt] (6.84,-4.84)-- (12.38,-4.84);
\draw [dash pattern=on 3pt off 3pt] (12.38,-4.84)-- (14.09,0.43);
\draw [dash pattern=on 3pt off 3pt] (14.09,0.43)-- (9.61,3.69);
\draw (7.9,-1.58)-- (13.44,-1.58);
\draw (12.38,1.67)-- (7.9,-1.58);
\draw (12.38,1.67)-- (14.09,0.43);
\draw (14.09,0.43)-- (13.44,-1.58);
\draw (11.07,0.72)-- (11.07,-1.58);
\draw (11.07,0.72)-- (12.54,-0.35);
\draw (12.54,-0.35)-- (12.14,-1.58);
\fill [color=black] (7.9,-1.58) circle (1.5pt);
\draw[color=black] (7.6,-1.84) node {$A_1$};
\fill [color=black] (13.44,-1.58) circle (1.5pt);
\fill [color=black] (12.38,1.67) circle (1.5pt);
\draw[color=black] (10.04,-1.88) node {$s$};
\draw[color=black] (9.6,0.42) node {$s$};
\fill [color=black] (12.14,-1.58) circle (1.5pt);
\draw[color=black] (12.34,-2.04) node {$A_2$};
\fill [color=black] (11.07,0.72) circle (1.5pt);
\draw[color=black] (10.96,1.28) node {$A_4$};
\fill [color=black] (12.54,-0.35) circle (1.5pt);
\draw[color=black] (12.96,-0.18) node {$A_3$};
\fill [color=black] (11.07,-1.58) circle (1.5pt);
\draw[color=black] (11.38,-0.56) node {$m_4$};
\end{tikzpicture}
\caption{The deltoid containing $K_n$}
\label{deltoid}
\end{figure}

By Corollary \ref{cor:reduced} in a random number of steps we reach a
reduced state, and so as in the simplex case we may and do assume that
the process starts from a reduced state. It also follows that in a reduced
state the change regions are always triangles.

Note that if the random point falls in $\mathcal{R}_1$ then beside $m_1$,
the opposite heights $m_3$ and $m_4$ also decrease. Some calculation shows
that if $m_1$ decreases by $x$ then $m_3$ and $m_4$ both decreas by
$c \, x$, with
\begin{equation} \label{c}
c= \frac{\sqrt{5} - 1}{2}
\end{equation}
being the ratio of the golden section.
We say that $m_i$ and $m_j$ are \textit{competing} heights, if
$m_i > \rho_5$, $m_j > \rho_5$, and they are not adjacent.

 To describe the dynamics of the process we define the following vectors:
$\v_1 = (1,0,c,c,0)$, $\v_2=(0,1,0,c,c)$,
$\v_3=(c,0,1,0,c)$, $\v_4=(c,c,0,1,0)$, and $\v_5=(0,c,c,0,1)$. With this notation,
if in a reduced state in the $(n+1)$th step the random point falls in $\mathcal{R}_i(n)$, then
\begin{equation} \label{eq:5szogmag}
\m_{n+1} = \m_n - h_n (m_i(n) - \rho_5) \v_i,
\end{equation}
where $h_1, h_2, \ldots$ are iid with common distribution function $H_2$ in (\ref{H_d}),
i.e.~$h$ is the distribution of the distance from the base of a uniformly
chosen point in a triangle with height 1. 
That is, $h_{n+1} (m_i(n) - \rho_5)$ is the distance of $p_{n+1}$ and the side
of $\mathcal{R}_i(n)$ which is opposite to $A_i(n)$. The probability of this event
is $|\mathcal{R}_i(n)|/|K_n|$, where $|\cdot|$ is the area.

\begin{lemma}\label{nincsnemszomszedos}
The limit pentagon cannot have non-adjacent heights equal to $\rho_5$.
\end{lemma}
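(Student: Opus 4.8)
The plan is to argue by contradiction, supposing that in the limit object $K_\infty$ there are two non-adjacent heights equal to $\rho_k$ (with $k=5$); say $m_1(\infty) = m_3(\infty) = \rho_5$ (in a regular pentagon the two non-adjacent sides to side $1$ are sides $3$ and $4$, and by symmetry it suffices to treat one such pair). Because $\m_n$ is componentwise monotone decreasing to $\m_\infty$ and all heights stay $\ge \rho_5$, for this to happen the two competing heights $m_1(n)$ and $m_3(n)$ must \emph{simultaneously} shrink down to $\rho_5$ along the process. The key geometric fact, recorded just before the lemma in~(\ref{eq:5szogmag}) together with the golden-ratio coupling~(\ref{c}), is that the change regions $\mathcal{R}_1$ and $\mathcal{R}_3$ are \emph{linked}: each time $p_{n+1}$ falls in $\mathcal{R}_1(n)$ the height $m_3$ also decreases (by $c$ times the decrease of $m_1$), and symmetrically when $p_{n+1}\in\mathcal{R}_3(n)$ the height $m_1$ also decreases. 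So neither of $m_1,m_3$ can reach $\rho_5$ unless the other does too; but I claim they cannot both reach it.

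The heart of the argument is a Borel--Cantelli / positive-probability estimate showing that with probability one, from any reduced state with $m_1(n)>\rho_5$ and $m_3(n)>\rho_5$, infinitely often a random point falls in $\mathcal{R}_1(n)$ (and also infinitely often in $\mathcal{R}_3(n)$). Indeed, as long as both heights exceed $\rho_5$ by some $\varepsilon>0$, the change triangles $\mathcal{R}_1(n)$ and $\mathcal{R}_3(n)$ each have area bounded below by a constant $\kappa(\varepsilon)>0$ depending only on $\varepsilon$ and $k$ (their shape is fixed — they are triangles similar to a fixed one, with height $m_i(n)-\rho_5\ge\varepsilon$), while by Lemma~\ref{termin} the area of $K_n$ is bounded above (it is contained in the unit disc $B$, say). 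Hence the conditional probability $|\mathcal{R}_i(n)|/|K_n|$ of hitting $\mathcal{R}_i$ at step $n+1$ is bounded below by a positive constant; by the conditional Borel--Cantelli lemma (Lévy's extension) this event occurs infinitely often a.s.\ on the event that both heights stay $\ge\rho_5+\varepsilon$ forever. Each such hit strictly decreases $m_1$, and the decrements are $h_{n+1}(m_1(n)-\rho_5)$; summing, one sees $m_1(n)$ would have to drop below $\rho_5+\varepsilon$ — so in fact $m_1(n)\downarrow\rho_5$, and likewise $m_3(n)\downarrow\rho_5$. This forces the total decrease of $m_1$ to be finite, hence $\sum$ of the decrements converges, but by the golden-ratio coupling the steps hitting $\mathcal{R}_1$ also decrease $m_3$ (summably), and the steps hitting $\mathcal{R}_3$ decrease $m_1$ by a positive $c$-fraction of a positive amount infinitely often — contradiction, because that would drive $m_1$ to $0$, not to $\rho_5>0$.

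Let me restate the contradiction more cleanly, since that is the step I expect to be the main obstacle to make airtight. Write $D_i = m_i(0)-\rho_5<\infty$ for the total decrease of height $i$ over the whole process. Each step that hits $\mathcal{R}_3$ contributes a decrease of size $\delta_{n+1}:=h_{n+1}(m_3(n)-\rho_5)$ to $m_3$ and simultaneously a decrease of size $c\,\delta_{n+1}$ to $m_1$ (by~(\ref{eq:5szogmag}) with $\v_3=(c,0,1,0,c)$). So $\sum_{n:\ p_{n+1}\in\mathcal{R}_3} \delta_{n+1} \le D_3 <\infty$ and also $c\sum_{n:\ p_{n+1}\in\mathcal{R}_3}\delta_{n+1}$ is part of $D_1$. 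The point is that on the hypothetical event $\{m_1(\infty)=m_3(\infty)=\rho_5\}$ the region $\mathcal{R}_3(n)$ never has zero area before the limit (since $m_3(n)>\rho_5$ for all finite $n$ — heights decrease strictly only on hits, and reach $\rho_5$ only in the limit), so by the Borel--Cantelli argument above $\mathcal{R}_3$ is hit infinitely often and $\sum_{n:\ p_{n+1}\in\mathcal{R}_3}\delta_{n+1}=\infty$: indeed along that subsequence $m_3(n)-\rho_5$ stays bounded away from $0$ for infinitely many initial terms, or more carefully, one compares with the product structure of~(\ref{eq:5szogmag}) exactly as in the recursion analysis (Lemma~\ref{lemma:recursion}), where $\prod(1-h)$ decays but the sum $\sum\prod(1-h_i)h_{n+1}$ of the decrements telescopes to the \emph{finite} total $m_3(0)-m_3(\infty)$ — so finiteness is automatic and the contradiction must instead come from the coupling with the \emph{other} region. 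The clean contradiction: if both $m_1$ and $m_3$ tend to $\rho_5$, then $\mathcal{R}_1$ is hit i.o.\ \emph{and} $\mathcal{R}_3$ is hit i.o.; but a hit of $\mathcal{R}_3$ forces a strictly positive extra decrease $c\,\delta_{n+1}$ of $m_1$ beyond the decreases caused by hits of $\mathcal{R}_1$ alone, and these extra amounts, summed over the infinitely many $\mathcal{R}_3$-hits, must be finite (bounded by $D_1$), hence $\delta_{n+1}\to 0$ along $\mathcal{R}_3$-hits, i.e.\ $h_{n+1}(m_3(n)-\rho_5)\to 0$ — fine so far; the genuine obstruction is that $m_1$ is being pushed down by \emph{two independent sources} (its own region and $\mathcal{R}_3$) whose decrements are, by the i.o.\ hitting, enough to overshoot $\rho_5$: quantitatively, combine $\mathcal{R}_1$-hits and $\mathcal{R}_3$-hits and run the single-region recursion estimate of Lemma~\ref{lemma:recursion} for the \emph{merged} sequence to conclude $m_1(n)\downarrow\rho_5^{\,}$ is impossible because the effective ``$c$'' constant in that recursion would exceed what the geometry permits (the disjointness of change regions caps the hitting probability). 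The place to be careful — and the main obstacle — is precisely this last quantitative step: showing that the combined downward pressure from a competing pair of regions is incompatible with both limits being exactly $\rho_5$, which is where the specific value $c=(\sqrt5-1)/2$ and the inscribed-circle lower bound $\pi/100$ from Lemma~\ref{termin} enter, ruling out a fine-tuned balance.
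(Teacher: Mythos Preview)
Your proposal has a genuine gap: it never reaches a contradiction. You correctly set up the hypothetical event $\{m_1(\infty)=m_3(\infty)=\rho_5\}$ and correctly identify the golden-ratio coupling (\ref{eq:5szogmag}), but from there the argument drifts. Two concrete problems. First, your Borel--Cantelli step does not work as written: the conditional probability of hitting $\mathcal{R}_i(n)$ is proportional to $(m_i(n)-\rho_5)^2$, which tends to $0$ on your hypothetical event, so there is no uniform lower bound to feed into L\'evy's lemma. Second, and more importantly, your attempted contradiction (``combined downward pressure\ldots enough to overshoot $\rho_5$'') never closes. You observe that the total decrease $D_1$ is finite and that the extra decrements from $\mathcal{R}_3$-hits must therefore be summable, conclude $\delta_{n+1}\to 0$, and then say ``fine so far'' --- but that \emph{is} consistent with both heights tending to $\rho_5$; nothing you have written rules it out. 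Your own final paragraph concedes this is the ``main obstacle'' and leaves it open. (A side remark: the assertion that ``all heights stay $\ge\rho_5$'' is false for the pentagon process in general --- individual heights can and do fall below $\rho_5$; only the maximum stays above.)

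The idea you are missing, and the one the paper's proof turns on, is that a hit in $\mathcal{R}_1$ decreases $m_3$ by $c\,h\,(m_1-\rho_5)$, which is proportional to $m_1-\rho_5$, \emph{not} to $m_3-\rho_5$. Hence if $m_1-\rho_5$ is large compared to $m_3-\rho_5$ (specifically $c(m_1-\rho_5)/2>m_3-\rho_5$), a single hit in $\mathcal{R}_1$ with $h>1/2$ pushes $m_3$ strictly below $\rho_5$, after which $m_3$ can only decrease further and $m_3(\infty)<\rho_5$. The paper makes this quantitative: either the ratio $(m_3-\rho_5)/(m_1-\rho_5)$ is already small enough (Case~1), in which case one change step kills $m_3$ with probability at least $1/20$; or it is not (Case~2), in which case with an explicit positive probability one change step (a hit in $\mathcal{R}_3$) brings the configuration into Case~1. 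Either way, from any reduced state with two competing heights above $\rho_5$, a fixed positive probability (independent of the state) of eliminating one of them in at most two change steps --- so such a configuration cannot persist forever. That overshoot mechanism is the whole proof; your ``pressure'' heuristic does not capture it.
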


\begin{proof}
Emphasizing that the process can be at any reduced state we omit the index $n$.

Assume that there is a state with at least 2 competing heights $> \rho_5$.
Let, say, $m_1$ be the maximum height, which has a competing pair, say  $m_3$.
If the maximum height has no competing pair $> \rho_5$ than its change has no
affect on the two competing heights. Thus $m_1$ will change eventually. So we
may and do assume that $m_1$ is the largest height.

\textbf{Case 1}: $c (m_1 - \rho_5) / 2 > m_3 - \rho_5$, with $c$ defined in (\ref{c}). Then the probability
that in the next change step the uniform random point falls in $\mathcal{R}_1$ 
is $> 1/5$, and given this the probability that $m_1$ decrease at least
with $(m_1 - \rho_5) /2$ equals $\p \{ h > 1/2 \} = 1/4$. In this case
$m_3$ decreases below $\rho_5$, and so the probability of this $\geq 1/20$.

\textbf{Case 2}: $c (m_1 - \rho_5) / 2 \leq m_3 - \rho_5$. The probability that
in the next change step the random point falls in $\mathcal{R}_3$ is
\[
\frac{(m_3 - \rho_5)^2}{\sum_{i=1}^5 (m_i - \rho_5)_+^2} \geq
\frac{(m_3 - \rho_5)^2}{5 (m_1 - \rho_5)^2} \geq \frac{c^2}{20}.
\]
We show that
with positive probability we end up in a state corresponding to Case 1. In the next step
\begin{align*}
m_1^\prime & = m_1 - c h (m_3 - \rho_5), \\
m_3^\prime & = m_3 - h (m_3 - \rho_5).
\end{align*}
We want an $h \in (0,1)$, such that
$c (m^\prime_1 - \rho_5) / 2 > m^\prime_3 - \rho_5$.
Some calculation shows that this happens if and only if
$$
h > \frac{1}{1 - \frac{c^2}{2}}
\left( 1 - \frac{c}{2} \frac{m_1 - \rho_5}{m_3 - \rho_5} \right),
$$
where the right side is 
$$
\leq \frac{1 - \frac{c}{2}}{1 - \frac{c^2}{2}} = \frac{3 \sqrt{5} - 5}{2}.
$$
The probability of this event is at least
$$
\p \left\{ h >  \frac{3 \sqrt{5} - 5}{2} \right\} = \frac{(7 - 3 \sqrt{5})^2}{4}
\approx 0.0213. 
$$
So we are almost in Case 1, but it can happen that $m_1^\prime$ is not maximal.
Notice that
$$
\frac{m_1^\prime - \rho_5}{m_1 - \rho_5}
= \frac{m_1 - \rho_5 - c (m_3 - \rho_5) h}{m_1 - \rho_5} \geq 1 - c,
$$
which implies that the probability of choosing in $\mathcal{R}_1$ in the next
change step is $\geq (1 - c)^2/5$.

So we showed that starting from any state with at least two competing
heights $> \rho_5$,
the probability that in two change steps one of them decreases below $\rho_5$ is
$$
\geq \frac{c^2}{20} \frac{(7- 3 \sqrt{5})^2}{4} \frac{(1-c)^2}{20}
\approx 2.97 \cdot 10^{-6}. 
$$ 
This proves that the process cannot have this configuration
for infinite number of steps.
\end{proof}

\begin{lemma}\label{nincsketszomszedos}
There is no non-regular pentagon with equal angles, in which the two largest heights
are consecutive.
\end{lemma}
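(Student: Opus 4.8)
The plan is to reduce the statement to a family of linear identities satisfied by the five heights of \emph{any} equiangular pentagon with sides parallel to those of $K$, and then read off the conclusion from the hypothesis together with those identities.

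First I would record the identity. Writing $b_j$ for the length of the $j$th side of such a pentagon $P$ (indices mod $5$), the same elementary angle computation that produced the vectors $\v_i$ and the golden‑ratio constant $c$ in (\ref{c})--(\ref{eq:5szogmag}) gives, with a suitable labelling, $m_i=\cos\frac{\pi}{10}\,b_{i+1}+\cos\frac{3\pi}{10}\,b_{i+2}$. Since the edge vectors of $P$ sum to zero and point in the directions $\frac{2\pi}{5}j$, one has $\sum_j b_j\cos\frac{2\pi j}{5}=\sum_j b_j\sin\frac{2\pi j}{5}=0$, and substituting the formula for $m_i$ yields the two independent relations $\sum_i m_i\cos\frac{2\pi i}{5}=\sum_i m_i\sin\frac{2\pi i}{5}=0$. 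Using $\sin\frac{4\pi}{5}=\sin\frac{\pi}{5}$, the imaginary relation is $\sin\frac{2\pi}{5}(m_1-m_4)+\sin\frac{\pi}{5}(m_2-m_3)=0$; together with its rotated copies, and $\sin\frac{\pi}{5}=c\sin\frac{2\pi}{5}$, this becomes
\begin{equation*}
m_i-m_{i+3}=c\,(m_{i+2}-m_{i+1}),\qquad i=1,\dots,5. \tag{$\star$}
\end{equation*}
I expect verifying $(\star)$ --- where the golden‑ratio identity $1-c=c^2$, equivalently $c=\cos\frac{3\pi}{10}/\cos\frac{\pi}{10}=\sin\frac{\pi}{5}/\sin\frac{2\pi}{5}$, enters --- to be the one genuinely computational step; everything after is a few lines.

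Now the argument. After a cyclic relabelling the two largest heights are $m_1$ and $m_2$, so $m_1,m_2\ge m_3,m_4,m_5$. Then $(\star)$ with $i=1$ reads $m_1-m_4=c(m_3-m_2)$: the left side is $\ge0$ and the right side is $\le0$, so both vanish, giving $m_1=m_4$ and $m_2=m_3$. Hence $m_2\ge m_4=m_1\ge m_3=m_2$, so $m_1=m_2$ and therefore $m_1=m_2=m_3=m_4$; and $(\star)$ with $i=3$ then reads $0=m_3-m_1=c(m_5-m_4)$, so $m_5=m_4=m_1$ as well. Thus all five heights of $P$ coincide. Finally, the linear map $(b_1,\dots,b_5)\mapsto(m_1,\dots,m_5)$ above is the circulant $\cos\frac{\pi}{10}\,S+\cos\frac{3\pi}{10}\,S^2$, with $S$ the cyclic shift; it is invertible because $\cos\frac{\pi}{10}>\cos\frac{3\pi}{10}>0$, so equal heights force equal side lengths, i.e.\ $P$ is regular --- contradicting the assumption that $P$ was non‑regular.

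The only subtle point is how ties interact with consecutiveness: the conclusion $m_1=m_4$ in the second paragraph already places the maximal height at the \emph{non‑adjacent} positions $1$ and $4$, so the hypothesis that the two largest heights be consecutive forces this tie to propagate up to $m_2$ --- and the chain above shows it then propagates to $m_5$ as well, collapsing everything. I do not foresee a real obstacle beyond organising the computation of $(\star)$ cleanly; alternatively one could obtain $(\star)$ by noting that it holds for the regular pentagon and is preserved under every change step (\ref{eq:5szogmag}), but the direct verification is cleaner and is not restricted to pentagons that actually occur in the process.
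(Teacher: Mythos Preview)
Your proof is correct and follows essentially the same route as the paper: your identity $(\star)$ with $i=1$ is, after the substitution $c=1/\lambda$, precisely the paper's relation (\ref{aranymetszes}), and the deduction that both sides must vanish (giving $m_1=m_4$, $m_2=m_3$, and then all heights equal) is identical. The only difference is in how the key identity is obtained --- the paper derives it by placing the pentagon in explicit coordinates and solving for the vertices, whereas you obtain it more structurally from the edge-closing condition $\sum_j b_j e^{2\pi i j/5}=0$; you are also a bit more explicit than the paper in recovering $m_5$ and in passing from equal heights to regularity.
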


\begin{proof}
As a first step we prove a somewhat surprising result that provides a linear relationship
between any four heights of the pentagon. We assume that $m_1, m_3$ and $m_4$ are given,
and we express $m_2$ as a linear combination of the previous three. To simplify the calculations, we place the pentagon
into a new coordinate system such that $A_1$ is the origin and $A_1A_2$ agrees with the
$x$-axis, and the whole pentagon lies in the upper half plane. Recall that $-v_1=(\cos(3\pi/10), \sin(3\pi/10)), \, -v_2=(\cos(7\pi/10),
\sin(7\pi/10)), \, -v_3=(\cos(11\pi/10), \sin(11\pi/10)), \,
-v_4=(0,-1)$, $-v_5=(\cos(-\pi/10), \sin(-\pi/10))$ are the outer normals of the sides, as we defined earlier. 
From the setup the equations of $\ell'_3=A_5A_1$ and $\ell'_4=A_1A_2$ readily follow:
$\ell'_3\colon \langle -v_3, (x,y) \rangle=0$ and $\ell'_4\colon y=0$. Using
the definition of $m_1$ we obtain $\ell'_1\colon \langle -v_1, (x,y)\rangle=m_1$.
And again by the definition of $m_3$ and $m_4$, $A_4$ is on the line of equation
$\ell_4\colon y=m_4$ and $A_3$ is on $\ell_3\colon \langle -v_3, (x,y) \rangle=-m_3$. We can express
$A_3$ and $A_4$ by solving the system of equations:
\begin{eqnarray*}
A_3 & = &
\left (\frac{m_1 \sin\frac{11\pi}{10}+m_3 \sin\frac{3\pi}{10}}{\sin \frac{8\pi}{10}}\; ,
\; \frac{m_1 \cos\frac{11\pi}{10}+m_3 \cos\frac{3\pi}{10}}{-\sin \frac{8\pi}{10}} \right ),\\
A_4  & = & \left(\frac{m_1-m_4\sin \frac{3\pi}{10}}{\cos\frac{3\pi}{10}} \; , \; m_4\right).\end{eqnarray*}
Now, we can find the equation of $\ell'_2$ and $\ell'_5$. After suitable simplifications,
introducing the golden ratio $\lambda=(\sqrt{5}+1)/2$, we obtain
\begin{eqnarray}\label{eqe2}
\ell'_2\colon \cos\frac{7\pi}{10} x+ \sin\frac{7\pi}{10} y=-m_1+\lambda m_4,\\
\label{eqe5} \ell'_5\colon \cos\frac{-\pi}{10} x+ \sin\frac{-\pi}{10} y=-m_1+\lambda m_3.
\end{eqnarray}
Thus $A_2=((-m_1+\lambda m_3)/\cos(-\pi/10),0)$, and to obtain $m_2$ we need to
calculate the distance between $A_2$ and $e_2$:
\begin{eqnarray*}
m_2 & = & \left|\cos\frac{7\pi}{10}\cdot \frac{-m_1+\lambda m_3}{\cos(\pi/10)}+
m_1-\lambda m_4 \right|= 
\left |\left(\frac1\lambda+1\right)m_1-m_3- \lambda m_4 \right | \\
& = &  | \lambda m_1 - m_3 - \lambda m_4|.
\end{eqnarray*}
From (\ref{eqe2}) and (\ref{eqe5}) it readily follows that $m_3>m_1/\lambda$
and $\lambda m_4> m_1$, hence
\begin{equation}\label{aranymetszes}
m_2=-\lambda m_1 + m_3 + \lambda m_4.
\end{equation} 
Now, suppose that $m_1$ and $m_2$ are the two largest heights. If $m_2\neq m_3$,
then we have a contradiction by (\ref{aranymetszes}). If $m_2=m_3$, then since
$m_1$ and $m_2$ are the two largest, it follows that $m_1=m_2=m_3=m_4$, and
hence the pentagon is regular.
\end{proof}

As a consequence of the previous lemmas we obtain

\begin{theorem} \label{thm:limit-pentagon}
The limit pentagon has exactly one height equal to $\rho_5$ a.s.
\end{theorem}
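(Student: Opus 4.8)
The plan is to combine the three structural lemmas just established: from Lemma~\ref{lemma:limit-obj} we know that \emph{at least one} height of $K_\infty$ equals $\rho_5$ (the maximal height equals $\rho_5$ a.s.), and we must rule out the possibility that \emph{two or more} heights equal $\rho_5$. Since there are only five heights, two heights achieving the value $\rho_5$ are either adjacent (consecutive in cyclic order) or non-adjacent; these two cases are exactly what Lemma~\ref{nincsnemszomszedos} and Lemma~\ref{nincsketszomszedos} address. So the theorem should follow by assembling these pieces, and the write-up will be short.

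First I would recall that by Corollary~\ref{cor:reduced} the process almost surely reaches, and thereafter stays in, a reduced state, so it suffices to argue for the tail of the process; and by Lemma~\ref{lemma:limit-obj} the limit object $K_\infty$ is a (possibly degenerate) pentagon with maximal height $\rho_5$. Next I would suppose, for contradiction, that on an event of positive probability $K_\infty$ has at least two heights equal to $\rho_5$, say $m_i(\infty)=m_j(\infty)=\rho_5$ with $i\neq j$. Since every height satisfies $m_k(\infty)\ge \rho_5$ and the maximal one is $\rho_5$, in fact \emph{all} heights satisfy $m_k(\infty)=\rho_5$ on the complementary part—wait, that is not forced; only that $m_i,m_j$ attain $\rho_5$ while the rest are $\ge \rho_5$. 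I would split into two cases according to whether $i$ and $j$ are adjacent or not. If they are non-adjacent, Lemma~\ref{nincsnemszomszedos} says the limit pentagon cannot have non-adjacent heights equal to $\rho_5$, a contradiction. If they are adjacent, then $m_i$ and $m_j$ are the two smallest heights of $K_\infty$ and both equal $\rho_5$; applying the linear relation (\ref{aranymetszes}) among any four heights (established in the proof of Lemma~\ref{nincsketszomszedos}), one deduces that in fact all five heights equal $\rho_5$, i.e.\ $K_\infty$ is regular with inradius $\rho_5$—but then \emph{every} pair of heights equals $\rho_5$, including non-adjacent ones, which again contradicts Lemma~\ref{nincsnemszomszedos}. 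Hence almost surely at most one height of $K_\infty$ equals $\rho_5$, and combined with the lower bound this gives exactly one.

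There is a small subtlety I would be careful about: Lemmas~\ref{nincsnemszomszedos} and~\ref{nincsketszomszedos} are phrased as statements about \emph{states of the process} (reduced configurations that the process can occupy), whereas the theorem is about the \emph{limit} object $K_\infty$. To bridge this, I would note that $\m_n \to \m_\infty$ componentwise (the heights are monotone decreasing and bounded below), so $K_\infty$ is realized as a limit of configurations $K_n$; the quantitative probabilistic estimates in Lemma~\ref{nincsnemszomszedos} show that any configuration with two competing heights $>\rho_5$ is left, with probability bounded below, within two change steps, so by a Borel--Cantelli / conditional-probability argument the process cannot keep two heights strictly above $\rho_5$ forever—hence in the limit at most one height exceeds $\rho_5$ is false; rather, the statement one gets is that not more than one height \emph{strictly exceeds} $\rho_5$ fails infinitely often, so in the limit at most one height is $>\rho_5$... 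I need to phrase this so that it yields: at most one height equals $\rho_5$ in the limit. The cleanest route: Lemma~\ref{nincsnemszomszedos} already is stated as a conclusion about the limit pentagon (``cannot have non-adjacent heights equal to $\rho_5$''), and Lemma~\ref{nincsketszomszedos} is a deterministic statement about equal-angled pentagons; so I can simply quote them as black boxes and the only work is the elementary combinatorial case split above.

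The main obstacle, then, is not any hard estimate—those are buried in the preceding lemmas—but rather making the case analysis airtight: specifically, in the adjacent case, verifying via (\ref{aranymetszes}) that two adjacent heights equal to $\rho_5$ forces regularity. One has to check that ``$m_i$ adjacent to $m_j$, both minimal and equal to $\rho_5$'' combined with the relation $m_2 = -\lambda m_1 + m_3 + \lambda m_4$ (and its cyclic analogues) together with $m_k \ge \rho_5$ for all $k$ pins down all five heights; this is the same computation as the final paragraph of the proof of Lemma~\ref{nincsketszomszedos}, where from $m_1=m_2$ being the two largest and $m_2=m_3$ one concludes $m_1=m_2=m_3=m_4$. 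Here the roles are dual (smallest instead of largest), so I would re-run that argument with the inequalities reversed, or simply invoke Lemma~\ref{nincsketszomszedos} directly in contrapositive form together with Lemma~\ref{nincsnemszomszedos}. I expect the whole proof to be three or four sentences.

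\begin{proof}
By Lemma~\ref{lemma:limit-obj}, the limit pentagon $K_\infty$ has maximal height $\rho_5$ almost surely; in particular every height of $K_\infty$ is at least $\rho_5$, and at least one of them equals $\rho_5$. It remains to show that almost surely no two heights of $K_\infty$ equal $\rho_5$. Suppose otherwise, so that on an event of positive probability there are indices $i\neq j$ with $m_i(\infty)=m_j(\infty)=\rho_5$. If the $i$th and $j$th heights are non-adjacent, this contradicts Lemma~\ref{nincsnemszomszedos}. If they are adjacent, then the two smallest heights of $K_\infty$ are consecutive and both equal $\rho_5$; by Lemma~\ref{nincsketszomszedos} (in the equal-angled pentagon $K_\infty$ two consecutive heights can be simultaneously minimal only if the pentagon is regular) it follows that all five heights of $K_\infty$ equal $\rho_5$, so in particular two non-adjacent heights equal $\rho_5$, again contradicting Lemma~\ref{nincsnemszomszedos}. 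Hence almost surely exactly one height of $K_\infty$ equals $\rho_5$.
\end{proof}
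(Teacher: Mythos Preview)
Your overall strategy---combine Lemma~\ref{lemma:limit-obj}, Lemma~\ref{nincsnemszomszedos}, and Lemma~\ref{nincsketszomszedos} via a case split on adjacent versus non-adjacent---is exactly what the paper does (the paper simply writes ``as a consequence of the previous lemmas''). However, your execution contains a sign error that propagates through the argument.

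Lemma~\ref{lemma:limit-obj} says the \emph{maximal} height of $K_\infty$ equals $\rho_5$. Hence every height satisfies $m_i(\infty)\le\rho_5$, not $m_i(\infty)\ge\rho_5$ as you write. (Heights can and do drop below $\rho_5$: in the recursion \eqref{eq:5szogmag} a change in $\mathcal{R}_1$ decreases $m_3$ and $m_4$ by $c$ times the decrease in $m_1$, with no floor at $\rho_5$; see also the remark after the theorem giving the sharp lower bound $\rho_5+2-4c\approx 0.337<\rho_5$.)

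With the inequality corrected, the adjacent case becomes immediate rather than ``dual'': if two adjacent heights both equal $\rho_5$, then since $\rho_5$ is the maximum, these are the two \emph{largest} heights and they are consecutive. Lemma~\ref{nincsketszomszedos} applies verbatim (it is stated for the two largest heights) and forces $K_\infty$ to be regular, whence all five heights equal $\rho_5$ and Lemma~\ref{nincsnemszomszedos} gives the contradiction. There is no need to ``re-run that argument with the inequalities reversed'' or to invoke a dual version about smallest heights---your parenthetical restatement of Lemma~\ref{nincsketszomszedos} with ``minimal'' in place of ``largest'' is a misquotation. Fix the direction and the proof is complete and matches the paper.
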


{\it Remark.} With a rather tedious case analysis one can prove that for any height of the limit pentagon $m_i \geq \rho_5+2-4c\approx 0.33688$, which is sharp. 

\subsection{Rapidness of the pentagon process}

In the previous section we proved that the limit pentagon has exactly one height equal
to $\rho_5$ a.s., i.e.~after finite number of steps $K_n$ has only one height
greater than $\rho_5$. This observation allows us to prove some asymptotic
results for the speed, however, as the area of the limit is now \textit{random},
we cannot prove limit theorem, only upper and lower bounds.

Let denote $t^*$ the maximum and $t_*$ the minimum of the area of the possible limit pentagons.
Note that $t_* \geq \pi/100$ by Lemma \ref{termin}. Then we have the following.

\begin{theorem} \label{th:pentagon-rapid}
For any $x > 0$
\[
\begin{split}
\e^{-\frac{x^2}{t_*}} & \leq
\liminf_{n \to \infty}
\p \left\{ \sqrt{ n \tan \frac{3\pi}{10} } (m_n -\rho_5 ) > x  \right\} \\
& \leq 
\limsup_{n \to \infty}
\p \left\{ \sqrt{ n \tan \frac{3\pi}{10} } (m_n -\rho_5 ) > x  \right\} \leq
\e^{-\frac{x^2}{t^*}}.
\end{split}
\]
Moreover,
\[
\E \sqrt{ n \tan \frac{3\pi}{10} } (m_n - \rho_5 ) =
\frac{\E \sqrt{t} }{4 \sqrt{\pi}},
\]
where $t$ denotes the area of the limit pentagon.
\end{theorem}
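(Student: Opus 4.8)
The plan is to reduce the statement to Lemma~\ref{lemma:recursion} once the right one-dimensional recursion has been extracted; the genuine difficulty is that the parameter playing the role of $a$ is here the \emph{random} area $t=|K_\infty|$ of the limit pentagon. First I would record the structural consequence of Theorem~\ref{thm:limit-pentagon}: since $\m_n$ is componentwise nonincreasing and $m_n\downarrow\rho_5$ while the limit pentagon has exactly one height equal to $\rho_5$, there is an a.s.\ finite time $N$ after which exactly one height $m_j(n)$ exceeds $\rho_5$, the state is reduced, and (enlarging $N$) $\tan\frac{3\pi}{10}(m_j(n)-\rho_5)^2\le t_*$. Put $\ell_n:=m_n-\rho_5=m_j(n)-\rho_5$. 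For $n\ge N$ the only change region of positive area is $\mathcal R_j(n)$, an isosceles triangle with apex angle $\frac{3\pi}{5}$ at $A_j(n)$ and apex-to-base distance $\ell_n$, so $|\mathcal R_j(n)|=\tan\frac{3\pi}{10}\,\ell_n^2$; combining this with~(\ref{eq:5szogmag}) and the fact that $1-h$ has distribution function $x^2$ when $h$ has law $H_2$ of~(\ref{H_d}), the sequence $(\ell_n)_{n\ge N}$ obeys recursion~(\ref{eq:ell-recursion}) with exponent $2$, constant $\tan\frac{3\pi}{10}$, and $a_n=|K_n|$ --- exactly the setting of Lemma~\ref{lemma:recursion} save that $a_n\downarrow t:=|K_\infty|\in[t_*,t^*]$ is now random.

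For the probability bounds I would sandwich $(\ell_n)$ between two honest instances of~(\ref{eq:ell-recursion}) with deterministic parameters, using the thinned description: the values at the change epochs are $\hat\ell_k=\ell_N V_1\cdots V_k$ with $V_i$ i.i.d.\ of distribution function $x^2$, and, conditionally on the $V_i$ and on the transverse positions of the chosen points, the gaps between successive changes are independent geometric variables with success probabilities $\tan\frac{3\pi}{10}\,\hat\ell_k^2/A_k$, where $A_k\in[t_*,|K_N|]$ is the (locally constant) area during the $k$-th plateau. Coupling the comparison process $(\ell_n^-)$ with $a\equiv t_*$ so that it uses the same $V_i$ and generates its geometric gaps from the same uniforms by inverse distribution function, the inequality $A_k\ge t_*$ forces its success probabilities above those of $(\ell_n)$, hence it has undergone at least as many changes at every time and $\ell_n^-\le\ell_n$ for all $n\ge N$; Lemma~\ref{lemma:recursion} with $a=t_*$ gives $\p\{\sqrt{n\tan\frac{3\pi}{10}}\,\ell_n^->x\}\to e^{-x^2/t_*}$, which yields the $\liminf$ bound. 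For the $\limsup$, fix $\epsilon>0$; since $|K_n|\downarrow t\le t^*$ there is an a.s.\ finite $N_\epsilon\ge N$ with $|K_n|\le t^*+\epsilon$ for $n\ge N_\epsilon$, and the same construction with $a\equiv t^*+\epsilon$, started at time $N_\epsilon$ from $\ell_{N_\epsilon}$, gives $\ell_n\le\ell_n^+$ for $n\ge N_\epsilon$; Lemma~\ref{lemma:recursion} (insensitive to a finite random time shift and a random bounded initial value) gives the limit $e^{-x^2/(t^*+\epsilon)}$, and $\epsilon\downarrow0$ finishes.

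For the expectation, the same two-sided coupling sandwiches $\E\,\sqrt{n\tan\frac{3\pi}{10}}\,\ell_n$ between the corresponding quantities for $\ell_n^\pm$, and the $\alpha=2$ case of the moment statement of Lemma~\ref{lemma:recursion} bounds $\E\,\bigl(n\tan\frac{3\pi}{10}\,\ell_n^2\bigr)$ uniformly in $n$, so $\{\sqrt{n\tan\frac{3\pi}{10}}\,\ell_n\}_n$ is uniformly integrable. To pin down the exact constant I would apply Lemma~\ref{lemma:recursion} conditionally on the limit shape (hence on $t$): under the conditional law $a_n=|K_n|$ still decreases a.s.\ to the now-deterministic $t$, so the $\alpha=1$ moment formula shows $\E[\sqrt{n\tan\frac{3\pi}{10}}\,\ell_n\mid t]$ converges to a fixed multiple of $\sqrt t$, and integrating against the law of $t$ --- legitimate by the uniform integrability --- produces the value asserted in the statement. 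One can instead avoid conditioning on a tail variable by partitioning $[t_*,t^*]$ into short subintervals and sandwiching, as above, on each event $\{t\in[s_{m-1},s_m]\}$, letting the mesh tend to $0$.

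The main obstacle is precisely this randomness of $t=|K_\infty|$: it is why Lemma~\ref{lemma:recursion} cannot be quoted verbatim and why the probabilistic conclusion is a pair of bounds rather than a limit law. The two delicate points are (i) making the monotone couplings rigorous through the thinned/geometric-gap representation --- a naive step-by-step coupling fails because the change probability is proportional to $\ell_n^2$, so the larger of two coupled processes ``wants'' to change more often, not less --- and (ii) justifying the conditional (or partitioned) application of the lemma together with the uniform integrability underlying the expectation.
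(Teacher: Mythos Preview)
Your argument is correct, but you work considerably harder than the paper does. The key observation you make for the expectation --- that once only one height exceeds $\rho_5$ the limit shape, and hence $t$, is already determined, so one may condition on $t$ and apply Lemma~\ref{lemma:recursion} with the now-deterministic limit $a=t$ --- is exactly what the paper uses for \emph{both} parts of the theorem. Conditionally on $t$ one gets
\[
\p\!\left\{\sqrt{n\tan\tfrac{3\pi}{10}}\,(m_n-\rho_5)>x\ \Big|\ t\right\}\longrightarrow e^{-x^2/t},
\]
and since $t\in[t_*,t^*]$ the probability bounds follow immediately by monotonicity of $s\mapsto e^{-x^2/s}$; the expectation statement is then just the tower property plus the moment part of Lemma~\ref{lemma:recursion}, with dominated convergence supplied by that same lemma. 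No couplings are needed.

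Your thinned/geometric-gap coupling is a legitimate alternative and you have correctly identified the subtlety that makes a naive step-by-step coupling fail; by forcing both processes to share the \emph{same} sequence $(V_i)$ of multiplicative jumps, the plateau-by-plateau success probabilities become comparable through $A_k\ge t_*$ (resp.\ $A_k\le t^*+\varepsilon$), and the monotone ordering goes through. This buys you a proof that never conditions on a tail variable, at the cost of building the two comparison processes by hand. The paper's route is shorter precisely because $t$ is not really a tail variable here: it is $\mathcal F_N$-measurable for the a.s.\ finite $N$ at which only one height remains above $\rho_5$, so the conditional application of Lemma~\ref{lemma:recursion} is unproblematic.
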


\begin{proof}
Put $t_n = |K_n|$. 
Once there is only one height $> \rho_5$ the limit pentagon is determined and
so is its area $\lim_{n \to \infty} t_n = t$. The area of the only non-empty change region
$| \mathcal{R}_i(n)| = (m_n - \rho_5)^2 \tan \frac{3 \pi}{10}$. This means that the
height process $\ell_n = m_n - \rho_5$ behaves as
\[
\ell_{n+1} = 
\begin{cases}
\ell_n ( 1 - h_{n+1} ), &\textrm{w.p.} \ \frac{\ell_n^2}{t_n} \tan \frac{3 \pi}{10} , \\
\ell_n, & \textrm{w.p.} \ 1-  \frac{\ell_n^2}{t_n} \tan \frac{3 \pi}{10},
\end{cases}
\]
where $h, h_1, h_2, \ldots$ are iid, $\p \{ 1- h \leq x \} = x^2$, for $x \in [0,1]$.
Since $t_n \downarrow t$ a.s., by Lemma \ref{lemma:recursion}
with $\delta=2$, $a_n = t_n$, $c = \tan (3\pi/10)$
we obtain that given $t$ we have for any $x > 0$
\[
\p \left\{ \sqrt{ \frac{n \tan \frac{3\pi}{10}}{t} } (m_n -\rho_5 ) > x \big| t \right\}
\to \e^{-x^2},
\]
or
\[
\p \left\{ \sqrt{ n \tan \frac{3\pi}{10} } (m_n -\rho_5 ) > x \big| t \right\}
\to \e^{-\frac{x^2}{t}}.
\]
The convergence of the moments also hold (as in Lemma~\ref{lemma:recursion}), in particular
\[
\begin{split}
\E \sqrt{ n \tan \frac{3\pi}{10} } (m_n - \rho_5 ) & =
\E \left[ \E \left[ \sqrt{ n \tan \frac{3\pi}{10} } (m_n- \rho_5 ) | t \right] \right] \\
& \to \E \int_0^\infty \e^{-\frac{x^2}{t}} \d x =
\frac{\E \sqrt{t} }{4 \sqrt{\pi}},
\end{split}
\]
and the theorem is proved.
\end{proof}

\section{Rapidness estimates} \label{sect:sok}

In general the polygon process is too complicated to say anything more about the limit object
than Lemma \ref{lemma:limit-obj}. According to this lemma the maximal height of the limit object is $\rho_k$.
Using stochastic majorization and minorization we are able to determine the order of the convergence.

\begin{theorem}\label{mainodd}
For any $x > 0$ we have
\[
0 < \liminf_{n \to \infty} \p \{ \sqrt{n} (m_n - \rho_k) > x \}
\leq \limsup_{n \to \infty} \p \{ \sqrt{n} (m_n - \rho_k) > x \} < 1.
\]
\end{theorem}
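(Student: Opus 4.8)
The plan is to sandwich the maximal-height process $m_n - \rho_k$ between two tractable comparison processes, using stochastic ordering to transfer the $n^{-1/2}$ rate from the pentagon-type analysis (Theorem~\ref{th:pentagon-rapid}) to the general odd polygon. First I would reduce to a reduced state via Corollary~\ref{cor:reduced}, so that the change regions are disjoint triangles; after this, in each step the single largest height $m_n$ decreases only when the random point falls in the change region $\mathcal{R}_{i_n}(n)$ sitting at the vertex opposite the largest-height side, and the probability of a change is $|\mathcal{R}_{i_n}(n)|/|K_n| = (m_n-\rho_k)^2 \tan(\pi/k) \cdot (\text{number of active regions})/|K_n|$ up to the geometry of which heights currently exceed $\rho_k$. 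The point is that $(m_n-\rho_k)$ obeys a recursion of the shape $\ell_{n+1} = \ell_n(1-h_{n+1})$ with probability $\Theta(\ell_n^2)$ and $\ell_{n+1}=\ell_n$ otherwise, exactly the regime covered by Lemma~\ref{lemma:recursion} with $\delta=2$; the only thing that varies is the constant in front of $\ell_n^2$, which is pinned between two positive bounds because $|K_n|$ is bounded above (by $|K|$) and below (by $\pi/100$, Lemma~\ref{termin}), and the number of competing heights is between $1$ and $k$.

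The key steps, in order: (1) After reaching a reduced state, set $\ell_n = m_n - \rho_k$ and write the one-step conditional law of $\ell_{n+1}$ given $\mathcal{F}_n$; observe $\ell_n \downarrow 0$ a.s.\ since $m_n \downarrow \rho_k$ by Lemma~\ref{lemma:limit-obj}. (2) Construct an upper comparison process $\overline{\ell}_n$ with $\overline{\ell}_0 = \ell_0$ and change probability $c_{\max} \overline{\ell}_n^2$ where $c_{\max}$ is chosen so that $c_{\max}\overline{\ell}^2$ dominates the true change probability $c_n \ell_n^2$ for all admissible configurations — i.e.\ $c_{\max} = k\tan(\pi/k)/(\pi/100)$ or a similar explicit bound — and couple so that $\ell_n \leq \overline{\ell}_n$ for all $n$; symmetrically build a lower process $\underline{\ell}_n$ with change probability $c_{\min}\underline{\ell}_n^2$, $c_{\min} = \tan(\pi/k)/|K|$. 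Here one must be slightly careful: a \emph{smaller} change probability should give a \emph{stochastically larger} height, so the monotonicity has to be set up correctly, coupling via a single uniform $[0,1]$ variable per step together with an independent $h_{n+1}$. (3) Apply Lemma~\ref{lemma:recursion} to $\overline{\ell}_n$ (with $a_n \equiv a$ constant, $c = c_{\max}$, $\delta=2$) and to $\underline{\ell}_n$ (with $c = c_{\min}$) to get $\sqrt{c_{\max}n}\,\overline{\ell}_n \stackrel{\mathcal{D}}{\to}\mathrm{Weibull}(2)$ and likewise for $\underline{\ell}_n$. (4) Conclude: for fixed $x>0$,
\[
\p\{\sqrt{n}\,\ell_n > x\} \leq \p\{\sqrt{n}\,\overline{\ell}_n > x\} = \p\{\sqrt{c_{\max}n}\,\overline{\ell}_n > \sqrt{c_{\max}}\,x\} \to \e^{-c_{\max}x^2} < 1,
\]
and $\p\{\sqrt{n}\,\ell_n > x\} \geq \p\{\sqrt{n}\,\underline{\ell}_n > x\} \to \e^{-c_{\min}x^2} > 0$. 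Finally, since the process reaches a reduced state after an a.s.\ finite random time $N$, and $m_n$ is monotone, shifting the index by $N$ changes neither $\liminf$ nor $\limsup$ of these probabilities, so the bounds for the reduced-start process give the bounds for the original one.

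The main obstacle I expect is step (2): verifying that the true conditional change probability of the maximal height really is bounded above and below by constant multiples of $(m_n-\rho_k)^2$ uniformly over all reachable polygon configurations, and — more delicately — that one can genuinely couple $\ell_n$ with the comparison processes pathwise. The subtlety is that when several heights exceed $\rho_k$, a change step triggered in a change region \emph{not} adjacent to the current maximal side may still alter $m_n$ (through the golden-ratio coupling of opposite heights, as in \eqref{eq:5szogmag}), or may promote a different height to be the new maximum; so ``$m_n$ decreases'' is not literally ``$\ell_n(1-h_{n+1})$''. The clean way around this is to not track $m_n$ step-by-step but to note that \emph{whenever} $m_n$ strictly decreases it does so by a factor $(1-h)$ of a quantity $\le m_n - \rho_k$, and that the \emph{rate} at which decrease-events accumulate is $\Theta((m_n-\rho_k)^2)$; combined with monotonicity of $m_n$ this still yields the two-sided comparison with $\overline\ell_n,\underline\ell_n$ after possibly inflating/deflating the constants $c_{\max},c_{\min}$ by a factor depending only on $k$ and on the geometry (the maximal ``amplification'' $c$ from \eqref{c} and its powers). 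Pinning down these constants explicitly is routine planar geometry; the conceptual content is entirely in Lemma~\ref{lemma:recursion} plus the a.s.\ lower area bound from Lemma~\ref{termin}.
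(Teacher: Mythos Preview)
Your lower-bound direction ($\liminf>0$) is essentially the paper's argument: bound the total change-region area from above by a constant times $(m_n-\rho_k)^2$, bound the decrement from above, and compare with a minimum of iid's via Lemma~\ref{lemma:recursion}. Two small corrections there: first, your labels are reversed (a \emph{larger} change probability yields the \emph{smaller} comparison process, so the process with $c_{\max}$ minorizes, not majorizes); second, the apex angle of a change region is not always $(k-2)\pi/k$, because for $k\geq 7$ the polygon $K_n$ may be degenerate and several ``vertices'' can coincide---the paper handles this by bounding the angle from below via Lemma~\ref{termin}, which is where the constant $\delta_1=\tan(\arcsin\tfrac{1}{20})$ comes from.

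The genuine gap is in the upper-bound direction ($\limsup<1$). You acknowledge that a change event may ``promote a different height to be the new maximum'', but your proposed fix---that decrease events of $m_n$ accumulate at rate $\Theta((m_n-\rho_k)^2)$---does not rescue a scalar comparison. The rate of \emph{strict} decreases of $m_n$ is indeed $\Omega((m_n-\rho_k)^2)$, but the \emph{amount} of each decrease is not bounded below by any fixed multiple of $m_n-\rho_k$: if $m_j(n)=m_n$ is the unique maximum and the point falls in $\mathcal{R}_j$, then $m_{n+1}=\max_{i\neq j} m_i(n)$ can be arbitrarily close to $m_n$. Hence there is no one-dimensional process of the type $\overline\ell_{n+1}=\overline\ell_n(1-h_{n+1})$ w.p.\ $c\,\overline\ell_n^{\,2}$ that stochastically dominates $m_n-\rho_k$, no matter how small you take $c$. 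The paper's way out is conceptually different: it majorizes the \emph{entire height vector} $\m_n\in\R^k$ by a round-robin process $\widehat\m_n$ in which coordinate $i$ can decrease only at times $n\equiv i\ (\mathrm{mod}\ k)$, proves $\m_n\stleq\widehat\m_n$ using the stochastic order on $\R^k$ defined through upper sets (which requires checking that the one-step conditional probabilities are monotone in $\y$), and then reads off $m_n\leq \max_i\widehat m_i(n)$, a maximum of $k$ independent minima of iid's with the desired $n^{-1/2}$ rate. This vector-valued majorization is the key idea your sketch is missing.
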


\begin{figure} 
\begin{center}
\includegraphics[scale=0.25]{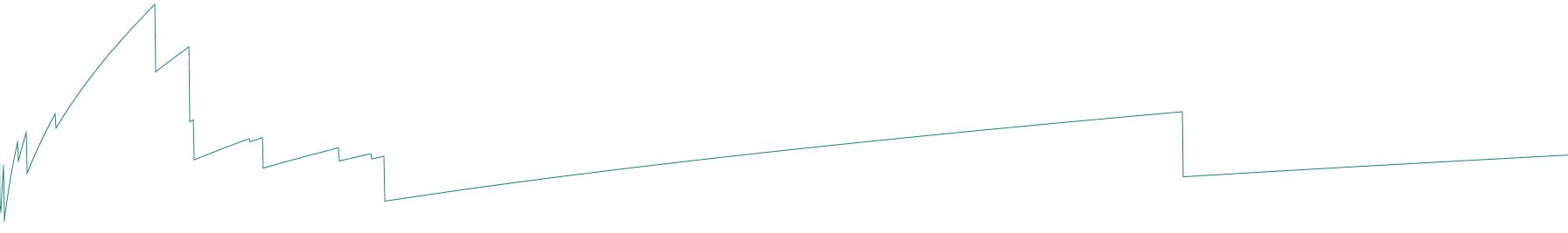} 
\caption{\label{fig:ff} The evolution of $\sqrt n (m_n - \rho_7)$ for 1800 iterations of the heptagon process}
\end{center}
\end{figure}

\begin{proof}
Let $\m_n = (m_1(n), \ldots, m_k(n))$ be the height vector, $m_n$ its maximum, and
$A_n = \sum_{i=1}^k |\mathcal{R}_i(n)|$ the area of the change regions.
By Corollary \ref{cor:reduced} we may and do assume that the change regions are already disjoint.
The probability of no change is the probability that the random point does
not fall in $\bigcup \mathcal{R}_i(n)$, is
$\p \{ \m_{n+1} = \m_n \} = 1 - A_n / |K_n|$.
The probability of change is $ A_n / |K_n|$, in particular
$|\mathcal{R}_i(n)| / |K_n|$ is the probability that we choose the point in
$\mathcal{R}_i(n)$. In this case $m_i (n+1) = m_i(n) - h_{n+1}^i (m_i(n) - \rho)$,
and all the other heights decrease at most with $ h_{n+1}^i (m_i(n) - \rho)$,
where $h_{n+1}^i (m_i(n) - \rho)$ is the distance from the base of a uniformly chosen point
in $\mathcal{R}_i(n)$, and so $h_{n+1}^i$ is the distance from the base of a uniformly
chosen point in $ \mathcal{R}_i(n) (m_i(n) - \rho)^{-1}$, i.e.~we scale the change region
to have height 1. So we have that in case of change
$\m_{n+1} \geq \m_n - h_{n+1}^i (m_n - \rho) \unit$, where $\unit$ stands for the constant 1 vector, and so
$m_{n+1} \geq m_n - h_{n+1}^i (m_n - \rho)$.
\smallskip

We want to construct simple processes, serving as lower and upper bound for $m_n$.
In order to do so we recall same basic properties of stochastic ordering. For random
variables $X$ and $Y$ we say that $X$ is \textit{stochastically larger} than $Y$
($Y \stleq X$) if $\p \{ X \leq x \} \leq \p \{ Y \leq x \}$ for any $x \in \R$. This is equivalent
to the condition $\E f(X) \geq \E f(Y)$ for any increasing function $f$. For random vectors the 
definition is somewhat trickier. In $\R^k$ a set $U$ is an \textit{upper set} if 
for $\x_1 \in U$, $\x_2 \geq \x_1$ imply $\x_2 \in U$. For $k$-dimensional random vectors $\X$  
and $\Y$ we have $\Y \stleq \X$ if $\p \{ \X \in U \} \geq \p \{ \Y \in U \}$ for any upper set $U$.
This is equivalent to the condition $\E f(\X) \geq \E f(\Y)$ for any componentwise increasing function
$f: \R^k \to \R$. We refer to Shaked and Shanthikumar \cite{SS} chapter 1.A and chapters 6.A and 6.B.

The first step is to obtain a stochastic majorant and minorant
for $h_n^i$ for any type of scaled change regions. 
Let us fix such a region, and let $t_x$ be the area of those points in the region, which
are farther than $1 - x$ from the base. If $h$ is the distance of the random point from
the base then
$\p \{ h > 1 - x \} = t_x / t_1$. The angle of the upper vertex is
$\leq \frac{k-2}{ k } \pi$, and the corresponding angle bisector is orthogonal
to the base, so for all $x \in [0,1]$
$$
t_x \leq \frac{1}{2} x \, 2x \tan \frac{(k - 2) \pi}{2k}
= x^2  \tan \frac{(k - 2) \pi}{2k}.
$$
By Lemma \ref{termin} a disc of radius $1/10$ is contained in $K_n$, which together with
convexity imply that the angle of the upper vertex is $\geq 2 \arcsin \frac{1}{20}$.
Therefore
$$
t_x \geq x^2 \tan \left( \arcsin \frac{1}{20} \right) =: x^2 \delta_1.
$$
Summarizing, we have
$$
x^2 \delta_1  \leq \p \{ h > 1 - x \} = \frac{t_x}{t_1} \leq x^2 c_1 , 
$$
where $c_1 = \tan \frac{(k - 2) \pi}{2 k} >>1$. Note that $\delta_1$ in the lower
bound does not depend on $k$. For $x \geq 0$ put
\begin{eqnarray} \label{h-major}
H^*(x) & =  & \min \{ x^2 c_1, 1\}, \\
\label{h-minor}
H_*(x) & = & \left\{
\begin{array}{ll}
x^2 \delta_1, & x \in [0,1), \\
1, & x \geq 1,
\end{array}
\right.
\end{eqnarray}
for the corresponding distribution functions of $1 - h$.

The previous reasoning also shows that
$$
\delta_1 (m_i(n) - \rho_k)_+^2 \leq |\mathcal{R}_i(n) | \leq c_1 (m_i(n) - \rho_k)_+^2,
$$
and so
\begin{equation} \label{eq:A_n-bound}
\delta_1 (m_n - \rho_k)^2 \leq A_n  \leq k c_1 (m_n - \rho_k)^2.
\end{equation}
By the trivial bound and by Lemma~\ref{termin} we have the following upper and lower bounds for the area:
\begin{equation} \label{eq:ter-est}
\pi/ 100 \leq |K_n| \leq |K| \leq \pi.
\end{equation}

\textbf{The lower bound.}
Using (\ref{eq:A_n-bound}) and (\ref{eq:ter-est}) the change probability can be estimated as
$$
\frac{A_n}{|K_n|} \leq \frac{100 k c_1}{\pi} (m_n - \rho_k)^2 =:
c_2 (m_n - \rho_k)^2.
$$
Let us define the process
\begin{equation} \label{m-prime}
m_{n+1}' = \left\{
\begin{array}{ll}
m_n' - (m_n' - \rho) h_{n+1} & \textrm{w.p. } c_2  ( m_n' - \rho_k)^2, \\
m_n', & \textrm{w.p. } 1 - c_2  (  m_n' - \rho_k)^2,  \\
\end{array} \right.
\end{equation}
where $h, h_1, h_2, \ldots, $ iid, and $1 - h$ has distribution function $H^*$ in (\ref{h-major}).
We claim that
\begin{equation} \label{1-step-dom}
\p \{ m_{n+ 1} \leq x | m_n = y \} \leq 
\p \{ m_{n+ 1}' \leq x | m_n' = y \}.
\end{equation}
Indeed, $m_n'$ decreases with higher probability, and if it decreases, decreases larger.
Putting $ \l_n' = m_n' - \rho_k$
\[
\l_{n+1}' = \left\{
\begin{array}{ll}
\l_n' (1 - h_{n+1})  & \textrm{w.p. }   c_2 ( \l_n' )^2, \\
\l_n', & \textrm{w.p. } 1 - c_2 ( \l_n' )^2.
\end{array} \right.
\]
We can write $\l_{n+1}' = \min\{ \l_n', U_{n+1} \}$,
with $U_{n+1}$ independent from $\l_n'$ and having distribution function
\[
\p \{ U_{n+1} \leq x \} =
\left\{
\begin{array}{ll}
c_1 c_2 x^2, &  x < \frac{\l_n'}{\sqrt{c_1}}, \\
c_2 \, (\l_n')^2, & x \in [ \frac{\l_n'}{\sqrt{c_1}}, \l_n' ], \\
\textrm{anything}, & x > \l_n'.
\end{array} \right.
\]
If $V$ has distribution function
\begin{equation} \label{h-tilde}
\widetilde H(x) = \min \left\{ c_1 \, c_2 x^2, 1 \right\},
\end{equation}
then $\min\{  \ell_n', U_{n+1} \} \stgeq \min\{   \ell_n', V \}$
for any $n$ and $  \l_n'$.
For $V_1, V_2, \ldots$ iid with distribution function $\widetilde H$, put
$\underline V_n = \min \{ V_1, \ldots, V_n \}$. We obtained that for all $n$
$$
\p \{    \ell_{n+ 1}' \leq x |   \ell_n' = y \}
\leq \p \{ \underline V_{n+ 1} \leq x | \underline V_n = y \},
$$
combining this with (\ref{1-step-dom}) we deduce
\begin{equation} \label{1-step-dom-V}
\p \{ \l_{n+ 1} \leq x | \l_n = y \}
\leq \p \{ \underline V_{n+ 1} \leq x | \underline V_n = y \}.
\end{equation}
We claim that these inequalities imply the unconditional inequality.

The latter process can be written as (we assume that the process starts from
a sufficiently small state)
\[
\underline V_{n+1} = \left\{
\begin{array}{ll}
\underline V_n k_{n+1}, & \textrm{w.p. } c_1 c_2 \underline V_n^2, \\
\underline V_n, & \textrm{w.p. } 1 - c_1 c_2 \underline V_n^2,
\end{array}
\right.
\]
where $k_1, k_2, \ldots $ are iid with distribution function $\p \{ k \leq x \} = x^2$, $x \in [0,1]$.
Short calculation gives that
$$
\p \left\{ \underline V_{n+1} \leq x | \underline V_n = y \right\} =
\left\{ 
\begin{array}{ll}
1, & x \geq y, \\
c_1 c_2 x^2, & x < y,
\end{array}
\right.
$$
which is decreasing in $y$ for any fix $x$.

Let us assume that $\ell_0 = \underline V_0$, and it is sufficiently small.
The law of total probability and (\ref{1-step-dom-V}) imply
$\p \{ \l_1 \leq x \} \leq \p \{ \underline V_1 \leq x \}$. Assume that
for any $x > 0$,
$\p \{ \l_n \leq x \} \leq \p \{ \underline V_n \leq x \}$ for some $n \geq 1$.
Then
\begin{align*}
\p \{ \l_{n+1} \leq x \} &
= \int \p \{ \l_{n+1} \leq x | \l_n = y \} \d \p\{ \l_n \leq y\} \\
& \leq  \int \p \{ \underline  V_{n+1} \leq x | \underline V_n = y \}
\d \p\{ \l_n \leq y\}  \\
& \leq \int \p \{ \underline  V_{n+1} \leq x | \underline V_n = y \}
\d \p\{ \underline V_n \leq y\} \\
& =  \p \{ \underline  V_{n+1} \leq x  \},
\end{align*}
where we used the law of total probability, (\ref{1-step-dom-V}), the
induction hypothesis, the monotonicity of the conditional probabilities,
and that for two distribution functions $F,G$, such that
$F(x) \leq G(x)$, and for a monotone decreasing function $f$ we have
$\int f \d F \leq \int f \d G$ (\cite{SS} chapter 1.A). So we proved that
$\underline V_n  \stleq \l_n$ for every $n$.

For the asymptotic behaviour of $\underline V_n$ we have
$$
\p \left\{ \sqrt{c_1 c_2} \sqrt{n} \underline V_n > x \right\}
\to \e^{- x^2},
$$
and since $\underline V_n  \stleq  \l_n$, we obtain
$$
\liminf_{n \to \infty} \p \left\{  \sqrt{c_1 c_2} \sqrt{n} (m_n - \rho_k) > x
\right\} \geq \e^{-x^2}.
$$
In particular we have
\begin{align*}
\E \left[  \sqrt{c_1 c_2} \sqrt{n} (m_n - \rho_k) \right]
& = \int_0^\infty \p \left\{  \sqrt{c_1 c_2} \sqrt{n} (m_n - \rho_k) > x
\right\}\d x \\
& \geq  \int_0^\infty \p \left\{ \sqrt{c_1 c_2} \sqrt{n} \underline V_n > x \right\} \d x  \\
& \to \int_0^\infty \e^{-x^2} \d x,
\end{align*}
where at the last convergence we used the uniform integrability of $\sqrt{n} \underline V_n$.

\medskip

\textbf{Upper bound.}
Now we turn to the construction of the upper bound process. If the random point
falls in the change region $\mathcal{R}_i(n)$ then we have
$m_i(n+1) = m_i(n) - h_{n+1}^i (m_i(n) - \rho_k)$, and the other heights may change
or may not. In any case $\m_{n+1} \leq \m_n - \ev_i h_{n+1}^i (m_i(n) - \rho_k)$, where $\ev_i$ is the $i$th standard, $k$ dimensional unitvector. The probability
of this event is $|\mathcal{R}_i(n)|/|K_n|$ for which by (\ref{eq:A_n-bound}) and (\ref{eq:ter-est})
\[
\frac{|\mathcal{R}_i(n)|}{|K_n|} \geq \frac{\delta_1}{\pi} (m_i(n) - \rho_k)_+^2 =: c_3 (m_i(n) - \rho_k)_+^2.
\]
Instead of $h^{i}$ we put the stochastically smaller $h$, for which $1-h$ has distribution
function $H_*$ defined in (\ref{h-minor}). Note that for this $h$ we have
$\p \{ h = 0 \}= 1 - \delta_1$. We define the $k$-dimensional
process $\widehat \m_n$ as follows. Let
$i \in \{1,2, \ldots, k  \}$ such that $n+1 \equiv i \; (\mathrm{mod} \  k)$. Then define
\begin{equation} \label{hat}
\widehat \m_{n+1} = \left\{
\begin{array}{ll}
\widehat \m_n - \ev_i h_{n+1} \left( \widehat m_i(n) - \rho_k \right) &
\textrm{w.p. } c_3 (m_i(n) - \rho_k)_+^2, \\
\widehat \m_n, & \textrm{w.p. } 1 - c_3 (m_i(n) - \rho_k)_+^2,
\end{array} \right.
\end{equation}
where $h_1, h_2, \ldots$ are iid and $1 - h$ has distribution function  $H_*$ in  (\ref{h-minor}),
that is in each step at most one component decreases, and component $i$
can decrease only in steps $\ell k +i$, $\ell \in \N$.
From the construction it is clear that for each $\y \in \R^{k}$,
and for each upper set $U$
\begin{equation} \label{1-step-maj}
\p \{ \m_{n+ 1} \in U | \m_n = \y \} \leq 
\p \{ \widehat \m_{n+ 1} \in U | \widehat  \m_n = \y \}.
\end{equation}
Now we show that $\p \{ \widehat \m_{n+ 1} \in U | \widehat  \m_n = \y \}$ is a monotone
increasing function of $\y$ for any fixed upper set $U$. To do so, let
$n+1 \equiv i \; (\mathrm{mod} \  k)$, and define
$u_i(\y) = \inf \{u : (y_1, \ldots, y_{i-1}, u, y_{i+1}, \ldots, y_{k}) \in U \}$.
We may assume that $\y \in U$, $m_i(n) > \rho_k$ and $u_i(\y) > \rho_k$, otherwise
the statement is obvious.
Recall that in one step only coordinate $i$ can change, and so by (\ref{hat}) we have
\begin{align*}
\p \{ \widehat \m_{n+ 1} \in U | \widehat  \m_n = \y \}
& = 1 - \p \{ \widehat \m_{n+ 1} \not\in U | \widehat  \m_n = \y \} \\
& = 1 - \p \{ \widehat m_i(n+ 1) < u_i(\y) | \widehat  \m_n = \y \} \\
& = 1 - c_3 (y_i - \rho)^2 
\p \left\{ 1 - h < \frac{u_i(\y) - \rho}{y_i - \rho} \right\}\\
& = 1 - c_3 (u_i(\y) - \rho)_+^2.
\end{align*}
By the properties of the upper set we have that
$\y \leq \y^\prime \Rightarrow u_i(\y) \geq u_i(\y^\prime)$ and so the conditional
probability is monotone increasing.
As in the case of the lower estimation this allows us to prove the majorization
$\m_n \stleq \widehat \m_n$ as follows: If $\m_0 = \widehat \m_0$ in distribution,
then we have the majorization for $n=1$, and if it is true for some $n \geq 1$,
then for any upper set $U$
\begin{align*}
\p \{ \m_{n+1} \in U \} &
= \int \p \{ \m_{n+1} \in U | \m_n = \y \} \d \p\{ \m_n \leq \y \} \\
& \leq  \int \p \{ \widehat \m_{n+1} \in U | \widehat \m_n = \y \}
\d \p\{ \m_n \leq \y \}  \\
& \leq \int \p \{ \widehat \m_{n+1} \in U | \widehat \m_n = \y \}
\d \p\{ \widehat \m_n \leq \y \} \\
& =  \p \{ \widehat \m_{n+1} \in U  \},
\end{align*}
where we used the law of total probability, (\ref{1-step-maj}), the
induction hypothesis, the monotonicity of the conditional probabilities,
and that for two distribution functions $F,G$, such that
$F(\x) \leq G(\x)$ (understood componentwise),
and for a monotone increasing function $f$ we have
$\int f \d F \leq \int f \d G$ (\cite{SS}, chapter 6.B).

Putting
\begin{equation} \label{eq:overline-h}
\overline H(x) = \min \{ c_3 x^2, 1 \},
\end{equation}
as before we see that
\[
\widehat m_i(n) - \rho = \min \{ W_{i,j} : j \leq \lfloor n/k \rfloor \}, 
\]
where $\{ W_{i,j} : i=1,2, \ldots, k; j \in \N \}$ are iid random
variables with distribution function $\overline H$. We have
\[
\begin{split}
& \p \left\{ \sqrt{c_3 n} \max_{1 \leq i \leq k }
\min  \{ W_{i,j} : j \leq \lfloor n/k \rfloor \} \leq x \right\} \\
& = \left[ 1 - \p \left\{ \sqrt{c_3 n}
\min  \{ W_{i,j} : j \leq \lfloor n/k \rfloor \} > x \right\}
\right]^{k} \\
& = \left[ 1 -  \left( 1- \frac{x^2}{n} \right)^{\lfloor n/k \rfloor} \right]^{k} \\
& \to \left( 1 - \e^{- \frac{x^2}{k} }\right)^{k}.
\end{split}
\]
This, together with the stochastic majorization
$\m_n \stleq \widehat \m_n$ implies that
\[
\limsup_{n \to \infty}
\p \left\{ \sqrt{c_3 n } (m_n - \rho_{k} ) > x \right\} \leq
1 -  \left( 1 - \e^{- \frac{x^2}{k} }\right)^{k}.
\]

In particular we have
\begin{align*}
\E \left[  \sqrt{c_3 n} (m_n - \rho_{k}) \right]
& = \int_0^\infty \p \left\{  \sqrt{c_3 n} (m_n - \rho_{k}) > y
\right\}\d y \\
& \leq  \int_0^\infty \p \left\{ \sqrt{c_3 n} (\widehat m_n - \rho_{k} > y \right\} \d y  \\
& \to \int_0^\infty \left[ 1 -  \left( 1 - \e^{- \frac{y^2}{k} }\right)^{k} \right] \d y.
\end{align*}
\end{proof}

\section{Concluding remarks}

The major difference between regular polygons with odd and even number of vertices hides in the fact that while in the odd
case the change regions are always triangles, in the even case change
regions might be trapezoids or (in the degenerated case) triangles, hence their area might be of different order (see Figure~\ref{figure:caps}).
 We conjecture that in the latter case the `typical' change regions
are trapezoids, which would imply that the speed of the process is $1/n$. (Compare with Theorem~\ref{mainodd}, where we 
obtained $1/\sqrt{n}$ for the speed in the odd case.)
This conjecture is well supported by numerical experiments. We conclude the paper with the results of some computer simulations, see Figure~\ref{fig:7} and Figure~\ref{fig:8}. It is transparent that $\sqrt{n}$ and $n$ are the right normalizations, respectively.

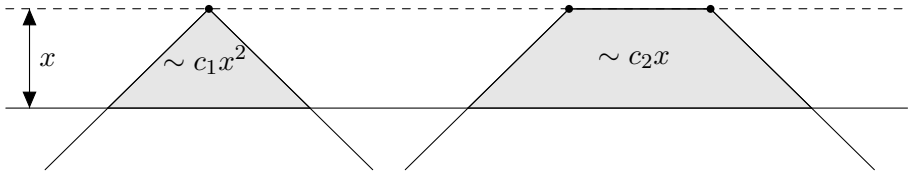
\begin{figure}
\begin{tikzpicture}[line cap=round,line join=round,>=triangle 45, x=0.45cm,y=1.0cm]
\clip(-2.92,0.44) rectangle (23.72,4.52);
\fill[fill=black,fill opacity=0.1] (3.08,3) -- (0.09,1.68) -- (6.07,1.68) -- cycle;
\fill[fill=black,fill opacity=0.1] (13.72,3) -- (10.73,1.68) -- (20.89,1.68) -- (17.9,3) -- cycle;
\draw [dash pattern=on 3pt off 3pt,domain=-2.92:23.72] plot(\x,{(--44.46-0*\x)/14.81});
\draw (3.08,3)-- (-1.76,0.86);
\draw (3.08,3)-- (7.92,0.86);
\draw (13.72,3)-- (8.88,0.86);
\draw (17.9,3)-- (22.74,0.86);
\draw [domain=-2.92:23.72] plot(\x,{(--24.9-0*\x)/14.82});
\draw (13.72,3)-- (17.9,3);
\draw [->] (-2.22,1.68) -- (-2.22,3);
\draw [->] (-2.22,3) -- (-2.22,1.68);
\draw (3.08,3)-- (0.09,1.68);
\draw (0.09,1.68)-- (6.07,1.68);
\draw (6.07,1.68)-- (3.08,3);
\draw (13.72,3)-- (10.73,1.68);
\draw (10.73,1.68)-- (20.89,1.68);
\draw (20.89,1.68)-- (17.9,3);
\draw (17.9,3)-- (13.72,3);
\fill [color=black] (3.08,3) circle (1.5pt);
\fill [color=black] (17.9,3) circle (1.5pt);
\draw[color=black] (2.97,2.32) node {$\sim c_1x^2$};
\fill [color=black] (13.72,3) circle (1.5pt);
\draw[color=black] (15.64,2.32) node {$\sim c_2x$};
\draw[color=black] (-1.68,2.32) node {$x$};
\end{tikzpicture}
\caption{ \label{figure:caps} Change regions of different shapes}

\end{figure}


\begin{figure} 
\begin{center}
\includegraphics[scale=0.25]{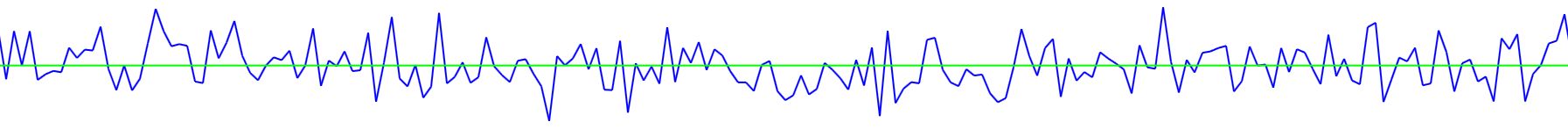}
\caption{ \label{fig:7} $\sqrt{100} (m_{100} - \rho_7)$ for 200 outcomes of the heptagon process (the minimal value is $0.215$, the maximal value is $1.078$)}
\end{center}

\end{figure}

\begin{figure}
\begin{center}
\includegraphics[scale=0.25]{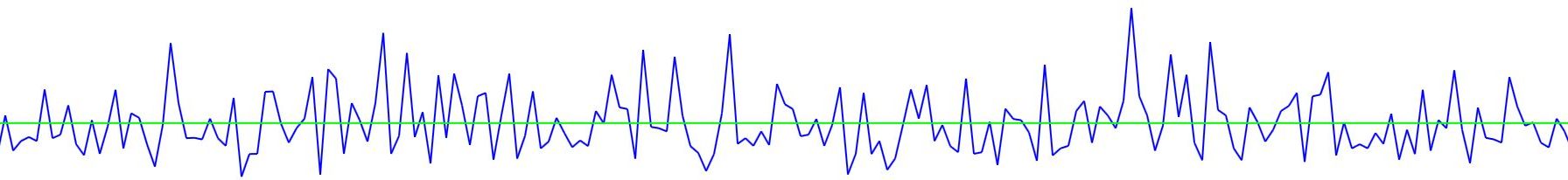} 
\caption{\label{fig:8} $100 (m_{100} - \rho_8)$ for 200 outcomes of the octagon process (the minimal value is $0.236$, the maximal value is $5.381$)}
\end{center}
 
\end{figure}

\section*{Appendix}

\begin{proof*}{Lemma \ref{lemma:konnyu}}
To prove part (i) note that the distributional equality means
\[
F(x) = F(a) F(x/a),
\]
for all $0 \leq x \leq a$. The monotonicity of $F$ easily implies that
the solution has the stated form for some $\delta > 0$.
The `if' part follows by simple calculation.

We turn to part (ii). For any $x \in [1/2, 1]$
\[
\p \{ I(X \leq 1/2) = 0, \ \max \{ X, 1- X \}  > x \} = \p \{ X > x \} = 1 - F(x),
\]
and
\[
\begin{split}
& \p \{ I(X \leq 1/2) = 0 \} \, \p \{ \max \{ X, 1- X \}  > x \} \\
& = 
\left( 1 - F(1/2) \right) \left( 1 - F(x) + F(1-x-) \right).
\end{split}
\]
Solving the equation for $F$ we obtain the statement.
\end{proof*}

\begin{proof*}{Lemma \ref{lemma:recursion}}
After some calculation one obtains that given $\ell_n$ and $a_n$,
$\ell_{n+1} \stackrel{\mathcal{D}}{=} \min \{ \ell_n, Y \}$, where
$Y$ is a nonnegative random variable, such that $Y^\delta$ is uniformly
distributed on $[0,a_n/c]$.


For any $\varepsilon \geq 0$ let $U^{(\varepsilon)}, U^{(\varepsilon)}_1,
U^{(\varepsilon)}_2, \ldots$ be iid nonnegative random variables, such that
\[
\p \{ U^{(\varepsilon)} \leq x \} = x^\delta \frac{c}{a+\varepsilon},
\quad x \in \left[ 0, [(a+\varepsilon)/c]^{1/\delta} \right],
\]
that is $(U^{(\varepsilon)})^\delta \sim$ Uniform$[0, (a+\varepsilon)/c]$.
Put 
\[
M_n^{(\varepsilon)} = \min \{ U^{(\varepsilon)}_1, U^{(\varepsilon)}_2,
\ldots, U^{(\varepsilon)}_n \}.
\]

\noindent Since $a_n$ is decreasing, $Y \stgeq U^{(0)}$, therefore
\[
\ell_n \stgeq  \min \{ U^{(0)}_1, U^{(0)}_2, \ldots, U^{(0)}_n \}= M^{(0)}_n.
\]
As
\[
\p \left\{ \left( \frac{c n}{a} \right)^{1/\delta} M_n^{(0)} > x \right\}
\to \e^{-x^{\delta}},
\]
we have
\[
\liminf_{n \to \infty} \p \left\{ 
 \left( \frac{c n}{a} \right)^{1/\delta} \ell_n > x \right\} \geq
\lim_{n \to \infty} \p \left\{ \left( \frac{c n}{a} \right)^{1/\delta} M_n^{(0)} > x \right\}
= \e^{-x^{\delta}}.
\]
 
To prove the reverse inequality, let us fix $\varepsilon > 0$, $\beta > 0$.
Given that $a_n \leq a + \varepsilon$ we have
$Y \stleq  U^{(\varepsilon)}$, and thus given that
$a_{\lfloor \beta n \rfloor} < a + \varepsilon$ we have
\[
\ell_n \stleq  
\min \left\{ U^{(\varepsilon)}_1, U^{(\varepsilon)}_2, \ldots, U^{(\varepsilon)}_{\lfloor  (1-\beta)n \rfloor } \right\}
=: M^{(\varepsilon)}_{\lfloor (1-\beta)n \rfloor }.
\] 
By the law of total probability
\[
\begin{split}
& \p \left\{  \left( \frac{c n}{a} \right)^{1/\delta} \ell_n > x \right\} \\
& = \p \left\{ \left( \frac{c n}{a} \right)^{1/\delta} \ell_n > x
\Big| a_{\lfloor  \beta n \rfloor } < a+ \varepsilon \right\}
\cdot \p \left\{  a_{\lfloor  \beta n \rfloor } < a+ \varepsilon \right\} \\
& \phantom{=} + \p \left\{ \left( \frac{c n}{a} \right)^{1/\delta} \ell_n > x
\Big| a_{\lfloor  \beta n \rfloor } \geq a+ \varepsilon \right\}
\cdot \p \left\{  a_{\lfloor  \beta n \rfloor } \geq a+ \varepsilon \right\} \\
& \leq 
\p \left\{  \left( \frac{c n}{a} \right)^{1/\delta} M^{(\varepsilon)}_{\lfloor  (1-\beta)n \rfloor } > x \right\}
+  \p \left\{  a_{\lfloor  \beta n \rfloor } \geq a+ \varepsilon \right\}.
\end{split}
\]
By the assumption $a_n \downarrow a > 0$ a.s., so the second term goes to 0, while
from extreme value theory (see e.g.~\cite{Billingsley}, p.192) we have
\[
\lim_{n \to \infty}
\p \left\{ \left( \frac{c}{a} n \right)^{1/\delta} M^{(\varepsilon)}_{\lfloor  (1-\beta)n \rfloor } > x \right\}
= \e^{- x^{\delta} \frac{a}{a+\varepsilon} ( 1 - \beta)},
\]
that is, by the stochastic dominance
\[
\limsup_{n \to \infty} \p \left\{ 
\left( \frac{c}{a} n \right)^{1/\delta} \ell_n > x \right\} \leq 
\e^{- x^{\delta} \frac{a}{a+\varepsilon} ( 1 - \beta)}.
\]
Since $\varepsilon > 0$ and $\beta > 0$ are as small  as we want, we obtain
\[
\limsup_{n \to \infty} \p \left\{ 
\left( \frac{c}{a} n \right)^{1/\delta} \ell_n > x \right\} \leq 
\e^{- x^{\delta}},
\]
and the convergence in distribution is proved.

Once we have the distributional convergence, to prove the moment convergence
it is enough to show that $\{ n^{\alpha / \delta} \ell_n^\alpha \}$ is
uniformly integrable (see e.g.~\cite{Billingsley} Theorem 25.12).
Since $a_n$ is bounded, for some $\eta > 0$ we have
$a_n \leq a + \eta$ a.s.~for all $n \geq 1$, and thus
$\ell_n \stleq  M_n^{(\eta)}$. Therefore
\[
\p \{ n^{1 / \delta} \ell_n > x \} \leq
\p \{ n^{1 / \delta} M^{(\eta)}_n > x \} =
\left( 1 - \frac{x^\delta}{n} \frac{c}{a+\eta} \right)^n \leq
\e^{- x^\delta \frac{c}{a+\eta}}, 
\]
and the uniform integrability follows.
\end{proof*}

\bigskip

\textbf{Acknowledgement.} We are thankful to \'Arp\'ad Kurusa for providing exceeding computer simulations of the process and for the insightful figures.

\end{document}